\documentclass{article}
\pdfoutput=1
\usepackage[utf8]{inputenc} 
\usepackage[T1]{fontenc}    
\usepackage{hyperref}       
\usepackage{url}            
\usepackage{booktabs}       
\usepackage{amsfonts}       
\usepackage{nicefrac}       
\usepackage{microtype}      

\setlength{\textheight}{8in}
\setlength{\topmargin}{-2pc}
\setlength{\oddsidemargin}{0pc}
\setlength{\evensidemargin}{0pc}
\setlength{\textwidth}{6.5in}

\usepackage[english]{babel}
\usepackage{amsmath}
\usepackage{mathrsfs}
\usepackage{amssymb}
\usepackage{graphicx}
\usepackage[colorinlistoftodos]{todonotes}
\usepackage{url}
\usepackage{hyperref}
\usepackage{amsthm}
\usepackage{csquotes}
\usepackage{comment}
\usepackage[ruled]{algorithm}
\usepackage{algorithmicx}
\usepackage[noend]{algpseudocode}
\algnewcommand\algorithmicinput{\textbf{def}}
\algnewcommand\def{\item[\algorithmicinput]}
\usepackage{enumitem}
\usepackage{wrapfig}
\usepackage{authblk}
\usepackage{lipsum}
\usepackage{xcolor}
\usepackage{dsfont}
\usepackage{import}
\usepackage{xifthen}
\usepackage{pdfpages}
\usepackage{transparent}
\newcommand{%
    \def\svgwidth{0.25\columnwidth}
    \import{./figs/}{.pdf_tex}
}[1]{%
    \def\svgwidth{0.25\columnwidth}
    \import{./figs/}{#1.pdf_tex}
}

\DeclareMathOperator{\fp}{fp}
\DeclareMathOperator{\hyp}{hyp}
\DeclareMathOperator{\slim}{slim}
\DeclareMathOperator{\thin}{thin}
\DeclareMathOperator{\insize}{insize}
\DeclareMathOperator{\minsize}{minsize}
\DeclareMathOperator{\diam}{diam}
\DeclareMathOperator{\RRG}{RRG}
\DeclareMathOperator{\ER}{ER}
\DeclareMathOperator{\Var}{Var}
\DeclareMathOperator{\Bin}{Bin}

\usepackage[numbers]{natbib}
\usepackage{bibentry}

\theoremstyle{plain}

\newtheorem{thm}{Theorem}[section] 
\newtheorem*{thm-non}{Theorem} 

\theoremstyle{definition}
\newtheorem{defn}[thm]{Definition} 
\newtheorem{exmp}[thm]{Example} 
\newtheorem{lemm}[thm]{Lemma}

\newtheorem{coro}[thm]{Corollary}
\newtheorem{remark}[thm]{Remark}

\newenvironment{claim}[1]{\par\noindent\textbf{Claim:}\space#1}{}
\newenvironment{claimproof}[1]{\par\noindent\textbf{Proof:}\space#1}{\leavevmode\unskip\penalty9999 \hbox{}\nobreak\hfill\quad\hbox{$\qed$}}

\begin{document}

\title{Hyperbolicity, slimness, and minsize, on average}
\author[1]{Anna C. Gilbert\thanks{Email: anna.gilbert@yale.edu}, Joon-Hyeok Yim\thanks{Email: joon-hyeok.yim@yale.edu}}
\date{\today}
\affil[1]{\small{Yale University, New Haven, CT}}
\maketitle

\begin{abstract}
A metric space $(X,d)$ is said to be $\delta$-hyperbolic if $d(x,y)+d(z,w)$ is at most $\max(d(x,z)+d(y,w), d(x,w)+d(y,z))$ by $2 \delta$. A geodesic space is $\delta$-slim if every geodesic triangle $\Delta(x,y,z)$ is $\delta$-slim. It is well-established that the notions of $\delta$-slimness, $\delta$-hyperbolicity, $\delta$-thinness and similar concepts are equivalent up to a constant factor. In this paper, we investigate these properties under an average-case framework and reveal a surprising discrepancy: while $\mathbb{E}\delta$-slimness implies $\mathbb{E}\delta$-hyperbolicity, the converse does not hold. Furthermore, similar asymmetries emerge for other definitions when comparing average-case and worst-case formulations of hyperbolicity. We exploit these differences to analyze the random Gaussian distribution in Euclidean space, random $d$-regular graph, and the random Erd\H{o}s-R\'enyi graph model, illustrating the implications of these average-case deviations.
\end{abstract}

\section{Introduction}

There are two recent lines of work that we unite in this paper. One of them is in the realm of probability and statistical mechanics. The second is in the realm of more practical data analysis or geometric machine learning. First, Chatterjee and Sloman~\cite{chatterjee2021average} introduce the notion of average or expected Gromov hyperbolicity in the context of the Parisi ansatz and Ising models. Their main result is that if a metric space has small hyperbolicity on average (i.e., it is tree-like on average), then it can be approximately embedded in a tree with low distortion.

Simultaneously, there has been a resurgence in computing Gromov hyperbolicity values with the burgeoning interest in geometric graph neural networks. There are many proposals to improve the efficacy of graph neural nets by discovering the appropriate geometry and then embedding a data set comprised of a graph and associated node and/or edge features in that geometry. A natural geometry to use is hyperbolic space; thus, the necessity to determine just how hyperbolic a given graph data set is. 

The desire for methods to determine the ``geometry of a data set'' necessitate appropriate hyperbolicity measures that are adapted to finite and discrete structures. Furthermore, measures of hyperbolicity that are ``worst case'' are often too pessimistic for finite discrete data. As a result, Gilbert and Yim~\cite{yim2024fitting} adapted Chatterjee and Sloman's notion of average hyperbolicity for finite data structures and developed an algorithm for fitting tree-like on average data approximately into trees with low average distortion.

While Gromov hyperbolicity is just one measure of hyperbolicity for metric spaces, there are a number of different definitions that capture different geometric properties: slimness, thinness, and size of inscribed circles. These definitions are all equivalent up to various constants. In this paper, we adapt these geometric definitions to the finite discrete setting and define their average values. We do so in order to develop a robust set of potential algorithmic tools for analyzing the geometric properties of finite discrete structures. Surprisingly, we find that the equivalences in the \emph{usual worst case} definition do not necessarily hold in the \emph{average} definition and the constants are also not the same! 

Another key task in adapting and using these geometric tools is interpreting their results on finite data sets. In particular, it is crucial to understand how a data set might masquerade as almost hyperbolic. For example, do points drawn from a high dimensional Gaussian distribution exhibit low average hyperbolicity (despite residing in Euclidean space)? In addition, we to use these tools on graph data sets and so it is imperative to understand how they perform on families of random graphs. 

\begin{figure}[ht]
    \centering
    \def\svgwidth{0.9\columnwidth}
    \import{./figs/}{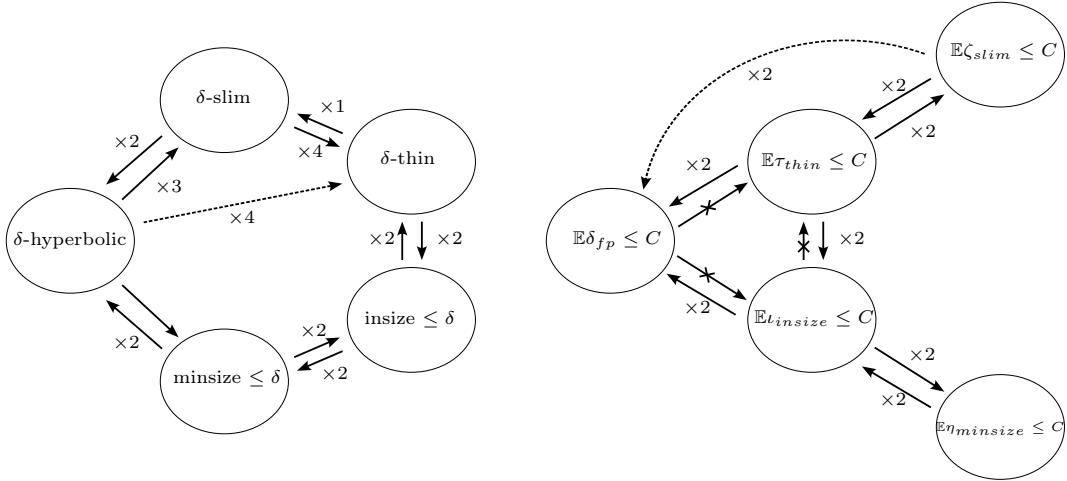}
    \caption{Overview of the results: various $\delta$-hyperbolicity conditions are no longer equivalent when it comes to average. The figure highlights what are the (non-)equivalences.}
    \label{fig:delta-diagram}
\end{figure}

\begin{table}
\centering
\resizebox{\columnwidth}{!}{
\begin{tabular}{c|c|ccc}
    \textbf{Models} & (Usual) hyperbolicity & Average $\delta_{fp}$ & Average $\zeta_{slim}$ & Average $\eta_{minsize}$ \\
    \midrule
    $G_n$ (Section 3) & $\Theta(\sqrt{n})$ & $o(1)$ & $\Theta(\sqrt{n})$ & $\Theta(\sqrt{n})$ \\
    $H_{M,n}$ (Section 3) & $\Theta(M)$ & $o(1)$ & $\nearrow M$ & $o(1)$ \\
    $\RRG(n,d) (d \geq 3)$ (Section 4) & $\Theta(\log n)$ & $o(\omega(n))$ & $\Theta(\log n)$ & $\Theta(\log n)$ \\
    $\ER(n, \frac{\lambda}{n}) (\lambda \geq 4.67)$ (Section 5) & $\Theta(\log n)$ & $o(\log n)$ & $\Theta(\log n)$ & $\Theta(\log n)$ \\
    \bottomrule
\end{tabular}
}
\caption{The hyperbolicity constants on some graph models, and how it \emph{breaks} when it comes to the average. Note that $G_n$ has size $O(n^{2.5})$ and $H_{M,n}$ has size $O(n^2)$.}
\end{table}

\section{Preliminaries}
\label{sec:prelims}

We begin with the usual definitions of Gromov hyperbolicity. For all of our definitions, there is an important distinction between a geodesic metric space and a discrete, combinatorial one. For graphs, this distinction is captured by treating edges in a graph as a curve in the metric space (upon which we can place intermediate points) or as a discrete object which is not a part of the metric space and, hence, cannot be further subdivided by intermediary points. This distinction causes some slight differences in definitions which we articulate as well.

\subsection{(Usual) definitions of hyperbolicity}

\begin{defn}[Gromov Hyperbolicity] Given a metric space $(X,d)$, the four-point condition of the quadruple $x,y,z,w$, $\operatorname{fp}$ is defined as
    \[ \operatorname{fp}(x,y,z,w) := \frac{1}{2} \left[ \max(P,Q,R) - \operatorname{med}(P,Q,R) \right], \]
    while
    \[ P := d(x,y)+d(z,w), Q:= d(x,z)+d(y,w), R:= d(x,w)+d(y,z). \]
    \begin{enumerate}[label=(\alph*)]
        \item Given a metric space $(X,d)$ and $\delta \geq 0$, if $\operatorname{fp}(x,y,z,w) \leq \delta$ holds for all $x,y,z,w \in X$, then we call $(X,d)$ \emph{$\delta$-hyperbolic}.
        \item Given a (connected) graph $G = (V,E)$ and $\delta \geq 0$, if $\operatorname{fp}(x,y,z,w) \leq \delta$ holds for all $x,y,z,w \in V$, then we will call $G$ as \emph{$\delta$-hyperbolic}. Here, we use shortest-path metric as the metric $d$.
    \end{enumerate}
\end{defn}

Note that the above hyperbolicity condition can also be defined using the Gromov product. The \emph{Gromov product} of two points $x,y \in X$ with respect to a base point $w \in X$ is defined as 
\[ 
    \langle x,y \rangle_w := \frac{1}{2}[d(x,w)+d(y,w)-d(x,y)].
\]
Then, the four point condition can be expressed as
\[
    \operatorname{fp}(x,y,z,w) = \max_{\pi \text{ perm}}[\min( \langle x,z \rangle_w, \langle y,z \rangle_w) - \langle x,y \rangle_w].
\]
The restriction to a base point $w$ in this definition does not cause a structural change in the definition of Gromov hyperbolicity; we do not discuss here the necessary but trivial modification. 

The more intuitive definition of hyperbolicity is that based upon the condition on a \emph{geodesic triangle}. Given three points $x,y,z$ in a \emph{geodesic space} $X$, we denote a \emph{geodesic triangle} $\Delta(x,y,z)$ with vertices $x,y,z$ as the union of all three geodesic segments $[x,y], [y,z], [z,x]$ (where $[p,q]$ denotes a geodesic segment with endpoints $p$ and $q$). 

\begin{defn}[$\delta$-slim] We start with, arguably the most familiar, the \emph{$\delta$-slim} definition.
    \begin{enumerate}[label=(\alph*)]
        \item Given a geodesic metric space $(X, d)$ and $\delta \geq 0$, a geodesic triangle $\Delta(x,y,z)$ for $x,y,z \in X$ is $\delta$-slim if for any point $w \in [x,y]$, the distance from $w$ to $[y,z] \cup [z,x]$ is at most $\delta$, and similarly for $[y,z]$ and $[z,x]$. We call $(X,d)$ \emph{$\delta$-slim} if every geodesic triangle in $X$ is $\delta$-slim.
        \item Given a (connected) graph $G = (V,E)$ and $\delta \geq 0$, a geodesic triangle $\Delta(x,y,z)$ for $x,y,z \in V$ is $\delta$-slim if for any \textcolor{red}{vertex} $w \in [x,y]$, the distance from $w$ to $[y,z] \cup [z,x]$ is at most $\delta$, and similarly for $[y,z]$ and $[z,x]$. We refer to $G$ as \emph{$\delta$-slim} if every geodesic triangle in $G$ is $\delta$-slim.
    \end{enumerate}
\end{defn}
Note for a graph there is a slight difference on the hyperbolicity constant. For example, a cycle with length 3 $C_3$ is 0-slim if we realize the space as a graph, as a geodesic segment only contains two endpoints. However, it is not 0-slim if we consider the space as a geodesic metric space.


A similar but more restricted condition is the following \emph{$\delta$-thin} condition.
\begin{defn}[$\delta$-thin]
    \begin{enumerate}[label=(\alph*)]
        \item Given a geodesic metric space $(X,d)$ and $\delta \geq 0$, a geodesic triangle $\Delta(x,y,z)$ for $x,y,z \in X$ is $\delta$-thin if $d(x,v) = d(x,w) \leq \langle y,z \rangle_x$, then $d(v,w) \leq \delta$ holds for any points $v \in [x,y]$ and $w \in [x,z]$, and similar conditions hold for $y$ and $z$. We say $(X,d)$ is \emph{$\delta$-thin} if every geodesic triangle in $X$ is $\delta$-thin.
        \item Given a (connected) graph $G = (V,E)$ and $\delta \geq 0$,  a geodesic triangle $\Delta(x,y,z)$ for $x,y,z \in X$ is $\delta$-thin if $d(x,v) = d(x,w) \leq \langle y,z \rangle_x$, then $d(v,w) \leq \delta$ holds for any \textcolor{red}{vertices} $v \in [x,y]$ and $w \in [x,z]$, and similar conditions hold for $y$ and $z$. We say $G$ is \emph{$\delta$-thin} if every geodesic triangle in $X$ is $\delta$-thin.
    \end{enumerate}
\end{defn}

Let us denote $\slim(\Delta(x,y,z))$ and $\thin(\Delta(x,y,z))$ as the infimum of the $\delta$-slim, thin, respectively, values for all triangles; i.e.,
\begin{align*} 
   \slim(\Delta(x,y,z)) &:= \inf \{ \delta \geq 0 : \Delta(x,y,z) \text{ is } \delta\text{-slim} \} \\
   \thin(\Delta(x,y,z)) &:= \inf \{ \delta \geq 0 : \Delta(x,y,z) \text{ is } \delta\text{-thin} \}.
\end{align*}

\begin{remark}
    By definition, a $\delta$-thin triangle is always $\delta$-slim. In other words, $\slim(\Delta(x,y,z)) \leq \thin(\Delta(x,y,z))$ always holds. Consequently, a $\delta$-thin space is always $\delta$-slim.
\end{remark}


\begin{figure}[ht]
    \centering
    \def\svgwidth{0.8\columnwidth}
    \import{./figs/}{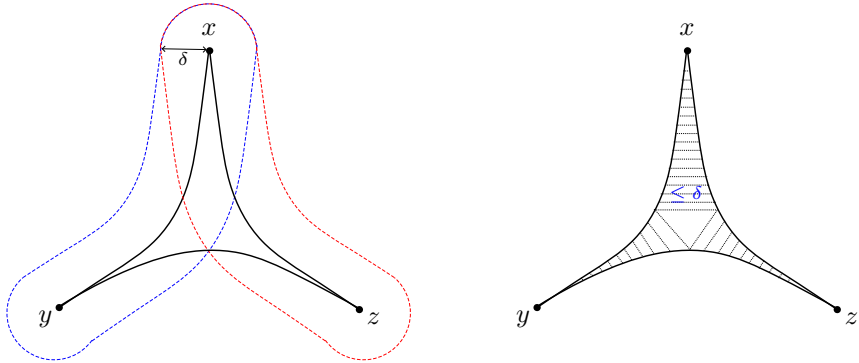}
    \caption{$\delta$-slim and $\delta$-thin triangle}
    \label{fig:delta-slim-and-thin}
\end{figure}

\begin{defn}[minsize, insize]
    The \emph{minsize} of a geodesic triangle $\Delta(x,y,z)$ is defined as follows.
    \[ \minsize(\Delta(x,y,z)) = \inf_{x' \in [y,z], y' \in [z,x], z' \in [x,y]} \diam(\{x',y',z'\})\]
    In particular, pick $m_{yz} \in [y,z]$ with $d(x,y) + d(y,m_{yz}) = d(x,z) + d(z,m_{yz})$ and $m_{zx} \in [z,x], m_{xy} \in [x,y]$ similarly. We call the diameter of $\{m_{xy} , m_{yz}, m_{zx}\}$, the \emph{insize} of the triangle.
\end{defn}

\begin{remark}
    If given space is a graph $G = (V,E)$, by convexity, it turns out that $\inf_{x',y',z'} \operatorname{diam}(\{x',y',z'\})$ occurs when $x',y',z'$ are all vertices (not a point on an edge). Therefore, we do not need to specifically define the minsize of triangle on a graph. Note that even if $m_{xy}, m_{yz}, m_{zx}$ may not be vertices in a graph (instead, they are midpoint of an edge), we see that minsize $\leq$ insize.
\end{remark}



\subsection{Equivalences amongst definitions of (usual) hyperbolicity}
\label{subsec:usual_equivs}
The following results show that all of the above hyperbolicity definitions are equivalent up to multiplication by a small constant. In the next section, we see that this is no longer the case when we consider the \emph{average} definitions of hyperbolicity. As many of these results have appeared in the literature (and we are simply compiling them in one place for completeness) or are straightforward consequences of the definitions, we state the results here and for the proofs, either refer to the appropriate reference or show the result in the Appendix.

\begin{thm}
    Given a geodesic metric space $(X,d)$,
    \begin{enumerate}
        \item if it is $\delta$-hyperbolic, then it is $3\delta$-slim~\cite{alonso1991notes}.
        \item if it is $\delta$-thin, then it is $\delta$-hyperbolic~\cite{bridson2013metric}.
        \item if it is $\delta$-slim, then it is $4\delta$-thin~\cite{ghys1990espaces}. The bound is tight. 
        \item if it is $\delta$-slim, then it is $2\delta$-hyperbolic~\cite{soto2011quelques}. The bound is asymptotically tight.
        \item if every geodesic triangle has minsize at most $\delta$, then it has insize at most $3\delta$ as well~\cite{ghys1990espaces}. We note that the original reference uses $4\delta$, but we can easily improve the bound.
        \item if every geodesic triangle has insize at most $\delta$, then it is $2\delta$-thin~\cite{ghys1990espaces}.
    \end{enumerate}
\end{thm}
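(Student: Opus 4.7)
The plan is to handle the six implications one by one, relying on the cited references for the classical arguments and filling in a short original argument where the paper claims an improvement.

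For (1), (2), (4), and (6), the arguments are short and well known: one takes the defining condition of the hypothesis (four-point bound, $\delta$-thin, $\delta$-slim, or insize at most $\delta$) and applies it on a well-chosen quadruple or pair of points together with a single triangle inequality. In each case the conclusion follows in a few lines using the geodesic parametrization of the triangle sides. I would reproduce these verbatim from \cite{alonso1991notes, bridson2013metric, ghys1990espaces, soto2011quelques} in the appendix, matching each sub-claim to its reference, since nothing new is being asserted in these bounds.

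For (3), I would write out a slightly longer argument. Given $v\in[x,y]$ and $w\in[x,z]$ with $d(x,v)=d(x,w)\leq \langle y,z\rangle_x$, $\delta$-slimness produces $v'\in[y,z]\cup[z,x]$ with $d(v,v')\leq\delta$. A case analysis on which side contains $v'$, combined with one further application of slimness to shift $v'$ onto $[x,z]$ and a comparison of parameter values on $[x,z]$ (both $v'$ and $w$ lie below the Gromov product threshold), yields $d(v,w)\leq 4\delta$. Tightness is exhibited by the standard ideal triangle in $\mathbb{H}^2$ from \cite{ghys1990espaces}.

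The bulk of the original work lies in (5), where the paper improves the Ghys--de la Harpe bound from $4\delta$ to $3\delta$. Let $x'\in[y,z]$, $y'\in[z,x]$, $z'\in[x,y]$ attain $\diam\{x',y',z'\}\leq\delta$, and let $m_{xy},m_{yz},m_{zx}$ be the insize midpoints, characterized by $d(x,m_{xy})=d(x,m_{zx})=\langle y,z\rangle_x$ and cyclic equalities. I would compare $m_{xy}$ with $z'$ by estimating $|d(x,m_{xy})-d(x,z')|$ via the bounds $d(z',y')\leq\delta$ and $d(y',x')\leq\delta$ transported along the geodesic parametrizations of $[x,y]$ and $[x,z]$, and then apply one further triangle inequality to relate $m_{xy}$ directly to $m_{zx}$. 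The original argument in \cite{ghys1990espaces} pays $\delta$ twice, once when moving from $z'$ to $m_{xy}$ and once when moving from $y'$ to $m_{zx}$. The improvement saves one $\delta$ by pairing these two moves through the joint diameter bound on $\{x',y',z'\}$ rather than treating them as independent displacements. The main obstacle will be bookkeeping: I need to verify that the saved $\delta$ appears simultaneously for all three pairs $(m_{xy},m_{yz})$, $(m_{yz},m_{zx})$, $(m_{zx},m_{xy})$, not just in a favorable case, and to check that the improved constant is still insensitive to degenerate configurations where some Gromov product vanishes.
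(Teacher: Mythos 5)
Your division of labor matches the paper's: items (1), (2), (4), (6) are handled by citation, while (3) and (5) are the ones the paper actually proves (as the two parts of Lemma~\ref{lemm:geo-tri-equiv}, stated per-triangle so that they can be reused in the average setting). Your plan for (5) is essentially the paper's argument: the paper fixes the minimizing triple $p_x,p_y,p_z$ and the internal points $m_x,m_y,m_z$, splits into four cases according to whether $p_x\in[m_x,z]$ and $p_y\in[m_y,z]$, and in each case uses the exact identities $d(z,m_x)=d(z,m_y)$ (and their cyclic analogues) to obtain the \emph{joint} bound $d(m_x,p_x)+d(p_y,m_y)\leq 2\delta$, after which $d(m_x,m_y)\leq d(m_x,p_x)+d(p_x,p_y)+d(p_y,m_y)\leq 3\delta$. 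That is exactly the ``pairing of the two displacements'' you describe, and the bookkeeping worry you raise is resolved by the case analysis; the only additional care needed is the subcase split on whether $p_z\in[x,m_z]$ or $p_z\in[m_z,y]$.

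The genuine gap is in your item (3). The step ``one further application of slimness to shift $v'$ onto $[x,z]$'' does not close. If the slimness projection of $v\in[x,y]$ lands on $[x,z]$, the claim is easy (two triangle inequalities give $d(v,w)\leq 2\delta$). The hard case is when it lands on $[y,z]$: a short computation shows this forces $d(x,v)\geq\langle y,z\rangle_x-\delta$, i.e.\ $v$ sits within $\delta$ of the internal point $m_z\in[x,y]$, and symmetrically $w$ within $\delta$ of $m_y\in[x,z]$; so everything reduces to bounding $d(m_y,m_z)$. Projecting again can land you back on $[x,y]$, and iterating slimness gives no convergence. The paper's proof instead argues by contradiction: assuming $d(u,v)>4\delta$ it first shows $d(u,[x,y])>2\delta$, deduces that $u,v$ must be exactly the internal points $m_y,m_z$, and then introduces auxiliary points $p_y\in[x,m_y]$, $p_z\in[x,m_z]$ at distance exactly $\delta$ from $m_y,m_z$ toward $x$, shows each is within $\delta$ of $m_x$, and concatenates to get $d(m_y,m_z)\leq 4\delta$. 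That auxiliary-point construction is the idea your sketch is missing. Your tightness witness is also wrong: the factor $4$ is not realized by an ideal triangle in $\mathbb{H}^2$ (whose slimness and thinness are both universal constants with a different ratio); the paper exhibits tightness with an explicit finite graph triangle that is $1$-slim and exactly $4$-thin (Figure~\ref{fig:tight-triangles}).
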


There are similar statements for discrete graphs. Proofs are in the Appendix.
\begin{thm}
    Given a graph $G = (V,E),$
    \begin{enumerate}
        \item if it is $\delta$-hyperbolic, then it is $3\delta + \frac{1}{2}$-slim.
        \item if it is $\delta$-thin, then it is $\delta + \frac{1}{2}$-hyperbolic.
        \item if it is $\delta$-slim, then it is $2\delta + \frac{1}{2}$-hyperbolic.
        \item if it is $\delta$-slim, then it is $4\delta$-thin.
    \end{enumerate}
\end{thm}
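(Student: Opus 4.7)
The plan is to adapt each of the four proofs of the corresponding geodesic-space statements from the preceding theorem, tracking the discretization error introduced by the restriction to vertex distances. The key observation is that a Gromov product $\langle u,v\rangle_w$ computed from integer-valued graph distances can be a half-integer (whenever $d(u,w)+d(v,w)-d(u,v)$ is odd). Consequently, the ``ideal'' reference points invoked by the geodesic arguments---points at Gromov-product distance from a triangle vertex, or meeting points of an inscribed triangle---may fail to be vertices of $G$, but the nearest vertex on the relevant geodesic side is at most $\frac{1}{2}$ away. This rounding is the source of the additive $\frac{1}{2}$ in items (1), (2), (3); item (4) does not incur it because both its hypothesis and its conclusion are already stated in terms of vertices.

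I would first handle (4), which is the cleanest. Given corresponding vertices $v\in[x,y]$ and $w\in[x,z]$ with $d(x,v)=d(x,w)\leq\langle y,z\rangle_x$, I apply slimness to $v$ to get a nearby vertex $v'$ on another side. If $v'\in[x,z]$ then $v'$ and $w$ sit on $[x,z]$ within $\delta$ of the same distance from $x$, so the triangle inequality yields $d(v,w)\leq 2\delta$. Otherwise $v'\in[y,z]$; I apply slimness to $w$ as well, and the worst case (when the returned vertex also lies on $[y,z]$) requires bouncing back, ultimately giving $d(v,w)\leq 4\delta$. This is the classical Ghys--de la Harpe argument verbatim, with vertices throughout and no rounding needed.

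For (1) I would follow the proof of Alonso et al.\ from the preceding theorem. Given a vertex $w\in[x,y]$ with $a:=d(x,w)\leq\langle y,z\rangle_x$ (the symmetric case is handled the same way), I select a vertex $w'$ on $[x,z]$ whose distance from $x$ is as close as possible to $a$. Since $a$ is an integer and $a\leq\langle y,z\rangle_x\leq d(x,z)$, I can typically take $d(x,w')=a$ exactly, but when the Gromov product bound is itself half-integer the nearest available vertex is only within $\frac{1}{2}$. Applying the four-point condition on the vertex quadruple $\{w,w',y,z\}$ then bounds $d(w,w')$ by $3\delta$ as in the geodesic proof, and the $\frac{1}{2}$ rounding yields the total $3\delta+\frac{1}{2}$. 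Items (2) and (3) proceed by analogous adaptations: for (2) I pair the two triangles $\Delta(x,y,z)$ and $\Delta(x,y,w)$ sharing side $[x,y]$ and apply thinness at the (possibly half-integer) Gromov-product reference points $\langle y,z\rangle_x$ and $\langle y,w\rangle_x$; for (3) I transport the fourth vertex across $\Delta(x,y,z)$ by repeated slimness, as in Soto's argument. In both cases the same half-integer rounding contributes the $\frac{1}{2}$.

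The main obstacle is the careful case-by-case bookkeeping in (1)--(3): each has several splits (which of $P,Q,R$ is maximal, whether $a\leq\langle y,z\rangle_x$ or its symmetric partner holds, whether each intermediate Gromov product is integer or half-integer), and one must verify that the rounding enters only once additively rather than $\frac{1}{2}$ per step. No genuinely new conceptual ideas beyond the cited geodesic arguments are needed---only patient tracking of parities and integer constraints.
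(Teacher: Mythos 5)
Your plan --- rerun the geodesic-space arguments on vertices and absorb the half-integer Gromov products into an additive $\frac{1}{2}$ --- is exactly the strategy this paper uses wherever it discretizes (cf.\ the $\lfloor\langle x,y\rangle_w\rfloor$ rounding in the graph half of the average-case theorem, and the remark in the proof of Lemma~\ref{lemm:geo-tri-equiv} that the required points exist ``since $\delta$ is integer,'' which already covers your item~(4)); despite the pointer to the Appendix, the paper never writes out items (1)--(3) in detail, so your sketch sits at essentially the same level of rigor as the source. One bookkeeping correction: in item~(1) the companion vertex $w'\in[x,z]$ with $d(x,w')=d(x,w)$ always exists exactly, because $d(x,w)$ is an integer and $d(x,w)\le\langle y,z\rangle_x\le d(x,z)$, so the $+\frac{1}{2}$ cannot come from rounding that choice and must be traced to a later step of the Alonso et al.\ argument --- harmless for the stated bound (a $3\delta$-slim graph is a fortiori $(3\delta+\frac{1}{2})$-slim), but worth fixing if you carry out the ``patient tracking'' you defer.
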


We conclude with the definitions of hyperbolicity of a geodesic metric space.
\begin{defn} Given a geodesic space or graph $X$, denote
    \[ \hyp(X):= \inf\{ \delta : X \text{ is } \delta \text{-hyperbolic}\} = \sup\{\fp(x,y,z,w)\}\]
    \[ \slim(X):= \inf\{ \delta : X \text{ is } \delta \text{-slim}\} = \sup\{\slim(\Delta(x,y,z))\}\]
    \[ \thin(X):= \inf\{ \delta : X \text{ is } \delta \text{-thin}\} = \sup\{\thin(\Delta(x,y,z))\}\]
    \[ \insize(X):= \sup\{ \text{insize}(\Delta) : \Delta \text{ geodesic triangle} \} = \sup\{\insize(\Delta(x,y,z))\}\]
    \[ \minsize(X):= \sup\{ \text{minsize}(\Delta) : \Delta \text{ geodesic triangle} \} = \sup\{\minsize(\Delta(x,y,z))\}\]
\end{defn}

\section{Average definitions of hyperbolicity}
\label{sec:average}

In this section, we explicate the \emph{average} version of each of the hyperbolicity definitions from the previous section. The idea underlying the average notions is that we select some number of points at random from a space $(X,d)$ or a graph $G=(V,E)$ and compute various geometric quantities. All of these definitions suppose that we select the points independently and identically distributed according to a distribution which we leave unspecified. We do, however, assume, that the distribution is non-trivial on all points in the space (or vertices of the graph).

\subsection{Definitions}

\begin{defn}
We define the \emph{average} hyperbolicity on $(X,d)$ as follows
\[ \mathbb{E}\delta_{hyp}(X) := \mathbb{E}_{x,y,z,w  \,\,\text{iid}} \fp(x,y,z,w). \]
\end{defn}
We define the slimness of a geodesic triangle and its average similarly. 

For graphs, we must be a bit more careful. It might be the case that the geodesic path between two vertices $x,y$ is not unique (i.e., there may be several shortest paths between two vertices), thereby making it difficult to define uniquely a geodesic triangle amongst three vertices $x,y,z$. In this case, we \emph{randomly} and \emph{uniformly} choose one segment amongst all shortest paths between $x,y$ to constitute a segment in $\Delta(x,y,z)$. To be more precise, suppose there are $\sigma_{x,y}$ shortest paths between $x$ and $y$. Then, we choose one path amongst them to form $[x,y] \in \Delta(x,y,z)$ with uniform probability $\frac{1}{\sigma_{x,y}}$, and similarly for $[y,z]$ and $[z,x]$.

\begin{defn}
We use the following notation to define the corresponding average hyperbolicity values on three vertices $x,y,z$:
\begin{enumerate}
    \item $\zeta(x,y,z) := \mathbb{E}\slim(\Delta(x,y,z))$
    \item $\tau(x,y,z) := \mathbb{E}\thin(\Delta(x,y,z))$
    \item $\iota(x,y,z) := \mathbb{E}\insize(\Delta(x,y,z))$
    \item $\eta(x,y,z) := \mathbb{E}\minsize(\Delta(x,y,z))$
\end{enumerate}
\end{defn}
For example, if the geodesic triangle on $x,y,z$ is unique, then $\zeta(x,y,z)$ is the slimness of the triangle itself.

\begin{defn}
We define the \emph{average} hyperbolicity values on $(X,d)$ as:
\begin{enumerate}
    \item $\mathbb{E}\zeta_{slim}(X) := \mathbb{E}_{x,y,z \text{iid}} \zeta(x,y,z)$
    \item $\mathbb{E}\tau_{thin}(X) := \mathbb{E}_{x,y,z \text{iid}} \tau(x,y,z)$
    \item $\mathbb{E}\iota_{insize}(X) := \mathbb{E}_{x,y,z \text{iid}} \iota(x,y,z)$
    \item $\mathbb{E}\eta_{minsize}(X) := \mathbb{E}_{x,y,z \text{iid}} \eta(x,y,z)$
\end{enumerate}
\end{defn}

\begin{remark}
    For geodetic graphs, the shortest path between two vertices $x$ and $y$ is always unique (See, for example, \cite{ore1962theory}). Therefore, $\zeta(x,y,z) = \slim(\Delta(x,y,z))$ always holds (and so on).
\end{remark}

\subsection{Equivalences: lower and upper bounds}
Given the equivalences amongst the usual definitions of hyperbolicity in Section~\ref{subsec:usual_equivs}, one might expect that the average definitions of hyperbolicity also enjoy similar equivalences. Surprisingly, this is not necessarily the case. There are some definitions of average hyperbolicity that are equivalent and there are some that are not. In this subsection, we detail the equivalences, exhibiting upper and lower bounds amongst the equivalent definitions. Because the non-equivalences and the examples that witness them are more compelling and suprising, we delay their presentation until Section~\ref{sec:non_equivs}

We begin with a simple lemma that relates the slimness or minsize of a geodesic triangle with its thinness or insize, respectively. 
\begin{lemm}\label{lemm:geo-tri-equiv}
Given a geodesic space $(X,d)$ or a graph $G = (V,E)$,
\begin{enumerate}
    \item If a geodesic triangle $\Delta(x,y,z)$ is $\delta$-slim, then it is $4\delta$-thin (both as a geodesic space, or as a graph).
    \item If a geodesic triangle $\Delta(x,y,z)$ has minsize $\delta$, then it has insize less than or equal to $3\delta$ (both as a geodesic space, or as a graph).
\end{enumerate}
\end{lemm}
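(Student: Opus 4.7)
For Part 1, fix points $v\in[x,y]$ and $w\in[x,z]$ with $d(x,v)=d(x,w)=t\leq\langle y,z\rangle_x$. The plan is to bound $d(v,w)$ by a projection argument onto $[x,z]$. First I would locate an ``exchange point'' $v_0\in[x,y]$ that is simultaneously within $\delta$ of both $[x,z]$ and $[y,z]$: the sets $U=\{v'\in[x,y]:d(v',[x,z])\leq\delta\}$ and $V=\{v'\in[x,y]:d(v',[y,z])\leq\delta\}$ are closed, cover $[x,y]$ by $\delta$-slimness, and respectively contain $x$ and $y$, so connectedness of $[x,y]$ (or of the discrete chain in the graph case) forces $U\cap V\neq\emptyset$. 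Companion points $p_0\in[x,z]$ and $q_0\in[y,z]$ both within $\delta$ of $v_0$ satisfy $d(p_0,q_0)\leq 2\delta$, and combining this with $v_0\in[x,y]$ and the triangle inequality on the path $x\to p_0\to q_0\to y$ pins down $d(x,v_0)$ to within $O(\delta)$ of $\langle y,z\rangle_x$. For $t$ below this break point, $v(t)$ projects onto a point of $[x,z]$ at distance $\approx t$ from $x$, giving $d(v(t),w(t))\leq 2\delta$; a one-step slimness ``bounce'' absorbs the residual window near the break point to yield the full $4\delta$ bound. The main friction is the bookkeeping required to show $v(t)$ stays close to the correct side uniformly for every $t\leq\langle y,z\rangle_x$.

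For Part 2, fix witness points $a\in[y,z]$, $b\in[z,x]$, $c\in[x,y]$ with pairwise distances at most $\delta$, and let $m_{yz},m_{zx},m_{xy}$ denote the internal (tripod) points, where $d(y,m_{yz})=\langle x,z\rangle_y$ and symmetrically at the other vertices. Introduce the three signed errors
\[
\delta_1=d(y,a)-d(y,c),\qquad \delta_2=d(x,b)-d(x,c),\qquad \delta_3=d(z,b)-d(z,a),
\]
each bounded in absolute value by a single pairwise distance in $\{a,b,c\}$ and hence by $\delta$. A direct computation using $a\in[y,z]$, $b\in[z,x]$, $c\in[x,y]$ collapses $\delta_1+\delta_2+\delta_3$ to $2(d(y,a)-\langle x,z\rangle_y)$ and $\delta_1+\delta_2-\delta_3$ to $2(d(x,b)-\langle y,z\rangle_x)$, so
\[
d(a,m_{yz}) = \tfrac{1}{2}|\delta_1+\delta_2+\delta_3|\qquad\text{and}\qquad d(b,m_{zx}) = \tfrac{1}{2}|\delta_1+\delta_2-\delta_3|.
\]

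The triangle inequality along $m_{yz}\to a\to b\to m_{zx}$ then gives
\[
d(m_{yz},m_{zx})\leq \tfrac{1}{2}\bigl(|\delta_1+\delta_2+\delta_3|+|\delta_1+\delta_2-\delta_3|\bigr) + d(a,b).
\]
The crucial step that improves the naive $4\delta$ bound to $3\delta$ is the elementary identity $\tfrac{1}{2}(|s+t|+|s-t|)=\max(|s|,|t|)$ applied with $s=\delta_1+\delta_2$ and $t=\delta_3$, which collapses the right-hand side to $\max(|\delta_1+\delta_2|,|\delta_3|)+d(a,b)$. A further check gives $|\delta_1+\delta_2|\leq d(a,c)+d(b,c)\leq 2\delta$ (from $d(y,a)\leq d(y,c)+d(a,c)$ and $d(x,b)\leq d(x,c)+d(b,c)$, combined with $\delta_1+\delta_2\geq -d(a,b)$ via the path $y\to a\to b\to x$) and $|\delta_3|\leq d(a,b)\leq \delta$, so the pair is at distance at most $2\delta+\delta=3\delta$. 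Symmetric arguments handle the other two pairs, yielding $\insize(\Delta(x,y,z))\leq 3\delta$. The principal obstacle is spotting the $\max$-identity that prevents the three error terms from simply stacking into $4\delta$; the graph case transfers essentially verbatim for both parts, the connectedness argument of Part 1 running over the discrete chain $[x,y]$ and the algebra of Part 2 carrying through unchanged.
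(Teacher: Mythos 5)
Your Part 2 is correct and takes a genuinely different route from the paper. The paper proves the $3\delta$ insize bound by a four-way case analysis on whether $p_x$ lies on the $z$-side of $m_x$ and $p_y$ on the $z$-side of $m_y$; your signed-error computation, combined with the identity $\tfrac{1}{2}(|s+t|+|s-t|)=\max(|s|,|t|)$, packages all four cases into a single line, and the identities $\delta_1+\delta_2+\delta_3=2(d(y,a)-\langle x,z\rangle_y)$ and $\delta_1+\delta_2-\delta_3=2(d(x,b)-\langle y,z\rangle_x)$ check out. (Your justification of $|\delta_1+\delta_2|\le 2\delta$ is more elaborate than needed: $|\delta_1|\le d(a,c)$ and $|\delta_2|\le d(b,c)$ already give it.) One caveat on conventions: you take $d(y,m_{yz})=\langle x,z\rangle_y$, the standard tripod point, whereas the paper's displayed defining equation $d(x,y)+d(y,m_{yz})=d(x,z)+d(z,m_{yz})$ actually yields $d(y,m_{yz})=\langle x,y\rangle_z$; since the paper's own case analysis relies on the tripod property $d(z,m_x)=d(z,m_y)$, your convention is evidently the intended one, but the discrepancy should be flagged rather than silently resolved. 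The graph case does transfer as you claim, since $d(a,m_{yz})=|d(y,a)-d(y,m_{yz})|$ only needs both points to lie on the geodesic $[y,z]$, even when $m_{yz}$ is an edge midpoint.

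Part 1, as written, has a genuine gap, and it is precisely the step you defer as ``bookkeeping.'' The connectedness argument produces one exchange point in $U\cap V$, but the sublevel set $U=\{v'\in[x,y]:d(v',[x,z])\le\delta\}$ need not be an interval, so locating a single crossover point (and pinning its distance from $x$ near $\langle y,z\rangle_x$) does not show that $v(t)\in U$ for every $t$ below the break point; without that, neither the $2\delta$ projection bound nor the final ``bounce'' is anchored. The missing ingredient is the elementary inequality $d(x,q)\ge\langle y,z\rangle_x$ for every $q\in[y,z]$, which follows from $2d(x,q)\ge(d(x,y)-d(y,q))+(d(x,z)-d(z,q))$. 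With it, any $v\in[x,y]$ that is $\delta$-close to $[y,z]$ must satisfy $d(x,v)\ge\langle y,z\rangle_x-\delta$; hence for $t<\langle y,z\rangle_x-\delta$ slimness forces $v(t)$ to be $\delta$-close to $[x,z]$, giving $d(v(t),w(t))\le 2\delta$ by your projection bound, and for $t\in[\langle y,z\rangle_x-\delta,\langle y,z\rangle_x]$ you compare with $t'$ just below $\langle y,z\rangle_x-\delta$ and pay an extra $2|t-t'|\le 2\delta$, for $4\delta$ in total (the degenerate case $\langle y,z\rangle_x<\delta$ is immediate since then $d(v,w)\le 2t<2\delta$). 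This renders the exchange-point machinery unnecessary. The completed argument is also quite different from the paper's, which proceeds by contradiction in the style of Ghys and de la Harpe using Gromov products; both yield the same constant $4$.
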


\begin{proof}
We follow the proof of~\cite{ghys1990espaces} and extend it slightly to the case when the geodesic triangle is in a graph. Given a geodesic triangle $\Delta(x,y,z)$ with slimness $\delta$, suppose there exists $u \in [x,z], v \in [x,y]$ with $d(u,x) = d(v,x) \leq \langle y,z \rangle_x$ and $d(u,v) > 4 \delta$. Then,
\[ d(u,[x,y]) = \min (d(u,[x,v]), d(u,[v,y])) \geq \langle x,v \rangle_u, \langle v,y \rangle_u ,
\]
where $2 \langle x,v \rangle_u = d(x,u)+d(v,u)-d(x,v) = d(v,u) > 4 \delta$ and $2 \langle v,y \rangle_u = d(v,u) + d(y,u) - d(v,y) = d(v,u) + d(y,u) - (d(x,y) - d(x,v)) = d(v,u) + (d(y,u) + d(x,u) - d(x,y)) \geq d(v,u) > 4 \delta$. Therefore, $d(u,[x,y]) > 2 \delta$ holds.

Now, pick $p_y \in [x,u]$ with $d(p_y,u) = \delta$. Even in the discrete graph case, we can choose such a point since $\delta$ is integer. Then we have $d(p_y,[x,y]) \geq d(u,[x,y]) - d(u,p_y) > \delta$ and there should be $q \in [y,z]$ such that $d(p_y,q) \leq \delta$. On the other hand,
    \begin{align*}
        d(p_y,[y,z]) & \geq d(x,[y,z]) - d(x,p_y) \geq \langle y,z \rangle_x - d(x,p_y) \\
        & = d(m_y,x) - d(x,p_y) = d(p_y,m_y) = d(p_y,u) + d(u,m_y) = \delta + d(u,m_y).
    \end{align*} 
    Therefore, $u = m_y, v = m_z$ and $d(p_y,[y,z]) = \delta$. Also,
    \[ d(p_y,[y,z]) = \min(d(p_y,[m_x,z]), d(p_y,[y,m_x])) \geq \min(\langle m_x,z \rangle_{p_y}, \langle y,m_x \rangle_{p_y}),\]
    where $2 \langle m_x, z \rangle_{p_y} = d(m_x, p_y) + d(z, p_y) - d(m_x, z) = \delta + d(p_y, m_x)$ and $2 \langle y,m_x \rangle_{p_y} = d(y,p_y)+d(m_x, p_y) - d(y,m_x) \geq (d(x,y) - d(x,p_y)) + d(p_y, m_x) - d(m_z, y) = d(x,m_z) + d(p_y, m_x) - d(x,p_y) = \delta + d(p_y, m_x)$. Therefore,
    \[ 2 \delta \geq 2 d(p_y, [y,z]) \geq 2 \min(d(p_y,[m_x,z]), d(p_y,[y,m_x])) \geq \delta + d(p_y, m_x) \Rightarrow d(p_y, m_x) \leq \delta.\]
    Similarly, pick $p_z \in [x,m_z]$ with $d(p_z, m_z) = \delta$. Then we have $d(p_z, m_x) \leq \delta$. This shows
    \[ d(m_y, m_z) \leq d(m_y, p_y) + d(p_y, m_x) + d(m_x, p_z) + d(p_z, m_z) \leq 4 \delta,\]
    a contradiction. This completes the proof of part~1 of the Lemma. 

For the second part of the Lemma, we modify the proof from \cite{ghys1990espaces} slightly. Given a geodesic triangle $\Delta(x,y,z)$, pick $p_x, p_y, p_z$ in $[y,z],[z,x],[x,y]$ with diameter $\delta$ and let $m_x, m_y, m_z$ be the points of the inscribed triple. We will show that $d(m_x, m_y) \leq 3 \delta$. There are three cases to analyze:
    \begin{enumerate}
        \item $p_x \in [m_x, z]$ and $p_y \notin [m_y, z]$: Then we see that $d(p_y, m_y) + d(m_x, p_x) = d(p_y, z) - d(p_x, z) \leq d(p_x, p_y) \leq \delta$. Therefore,
        \[ d(m_x, m_y) \leq d(m_x, p_x) + d(p_x, p_y) + d(p_y, m_y) \leq \delta + \delta = 2 \delta .\]
        \item $p_x \notin [m_x, z]$ and $p_y \in [m_y, z]$: One can similarly show that $d(m_x, m_y) \leq 2 \delta$ as well.
        \item $p_x \in [m_x, z]$ and $p_y \in [m_y, z]$: We see that $p_z \in [x, m_z]$ or $p_z \in [m_z, y]$ holds. Without loss of generality, assume that the former is true. Then
        \[d(p_y, m_y) + d(m_z, p_z) = d(p_y, x) - d(x,p_z) \leq d(p_y, p_z) \leq \delta,\]
        and
        \[d(m_x, p_x) - d(m_z, p_z) = d(y, p_x) - d(y, p_z) \leq d(p_x, p_z) \leq \delta.\]
        Therefore, $d(m_x, p_x) + d(p_y, m_y) \leq 2 \delta$ and $d(m_x, m_y) \leq d(m_x, p_x) + d(p_x, p_y) + d(p_y, m_y) \leq 3 \delta$.
        \item $p_x \notin [m_x, z]$ and $p_y \notin [m_y, z]$: Again, without loss of generality, assume $p_z \in [x, m_z]$. Then
        \[d(p_x, m_x) + d(m_z, p_z) = d(p_z, y) - d(y,p_x) \leq d(p_x, p_z) \leq \delta,\]
        and
        \[d(m_y, p_y) - d(m_z, p_z) = d(x, p_z) - d(x, p_y) \leq d(p_y, p_z) \leq \delta.\]
        Therefore, $d(m_x, p_x) + d(p_y, m_y) \leq 2 \delta$ and $d(m_x, m_y) \leq d(m_x, p_x) + d(p_x, p_y) + d(p_y, m_y) \leq 3 \delta$.       
    \end{enumerate}
    The above calculations cover every case and show $d(m_x, m_y), d(m_y, m_z), d(m_z, m_x) \leq 3 \delta$ as desired for part~2 of the Lemma.
\end{proof}

The graph in Figure~\ref{fig:tight-triangles} shows that this inequality is tight.

\begin{figure}[ht]
    \centering
    \def\svgwidth{0.8\columnwidth}
\begingroup%
  \makeatletter%
  \providecommand\color[2][]{%
    \errmessage{(Inkscape) Color is used for the text in Inkscape, but the package 'color.sty' is not loaded}%
    \renewcommand\color[2][]{}%
  }%
  \providecommand\transparent[1]{%
    \errmessage{(Inkscape) Transparency is used (non-zero) for the text in Inkscape, but the package 'transparent.sty' is not loaded}%
    \renewcommand\transparent[1]{}%
  }%
  \providecommand\rotatebox[2]{#2}%
  \newcommand*\fsize{\dimexpr\f@size pt\relax}%
  \newcommand*\lineheight[1]{\fontsize{\fsize}{#1\fsize}\selectfont}%
  \ifx\svgwidth\undefined%
    \setlength{\unitlength}{881.82671014bp}%
    \ifx\svgscale\undefined%
      \relax%
    \else%
      \setlength{\unitlength}{\unitlength * \real{\svgscale}}%
    \fi%
  \else%
    \setlength{\unitlength}{\svgwidth}%
  \fi%
  \global\let\svgwidth\undefined%
  \global\let\svgscale\undefined%
  \makeatother%
  \begin{picture}(1,0.42790594)%
    \lineheight{1}%
    \setlength\tabcolsep{0pt}%
    \put(0,0){\includegraphics[width=\unitlength,page=1]{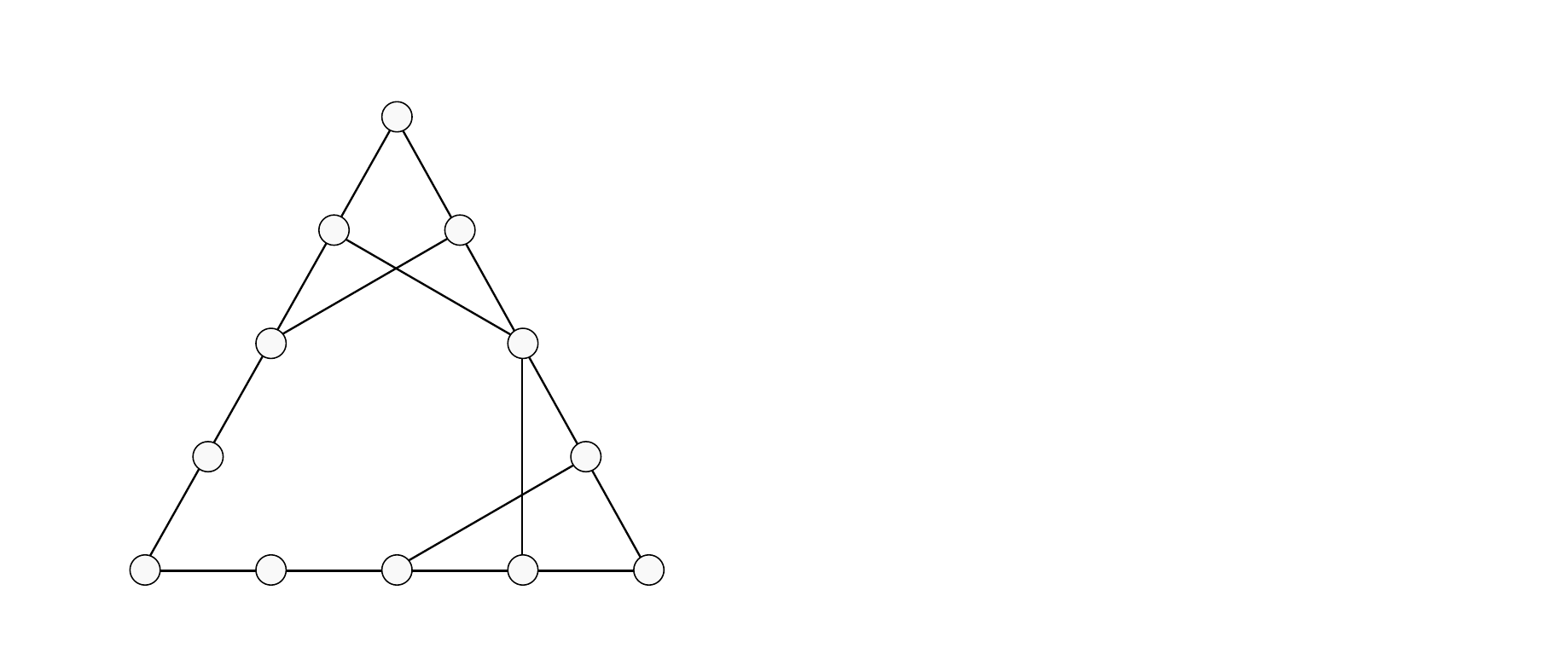}}%
    \put(0.24020672,0.37561041){\makebox(0,0)[lt]{\lineheight{1.25}\smash{\begin{tabular}[t]{l}$x_1$\end{tabular}}}}%
    \put(0.04899755,0.05232688){\makebox(0,0)[lt]{\lineheight{1.25}\smash{\begin{tabular}[t]{l}$y_1$\end{tabular}}}}%
    \put(0.43254733,0.05232688){\makebox(0,0)[lt]{\lineheight{1.25}\smash{\begin{tabular}[t]{l}$z_1$\end{tabular}}}}%
    \put(0,0){\includegraphics[width=\unitlength,page=2]{tight-triangles.pdf}}%
    \put(0.73946807,0.37570716){\makebox(0,0)[lt]{\lineheight{1.25}\smash{\begin{tabular}[t]{l}$x_2$\end{tabular}}}}%
    \put(0.54825887,0.05242361){\makebox(0,0)[lt]{\lineheight{1.25}\smash{\begin{tabular}[t]{l}$y_2$\end{tabular}}}}%
    \put(0.93180865,0.05242361){\makebox(0,0)[lt]{\lineheight{1.25}\smash{\begin{tabular}[t]{l}$z_2$\end{tabular}}}}%
    \put(0,0){\includegraphics[width=\unitlength,page=3]{tight-triangles.pdf}}%
  \end{picture}%
\endgroup%

    \caption{The left geodesic triangle is 1-slim, but 4-thin. The right geodesic triangle has minsize 1, but its insize is 3. Therefore we see that bounds on \ref{lemm:geo-tri-equiv} are tight and cannot be improved.}
    \label{fig:tight-triangles}
\end{figure}

\begin{coro}
    For a geodesic space or a graph $X$,
    \begin{enumerate}
        \item $\mathbb{E}\zeta_{slim}(X) \leq \mathbb{E}\tau_{thin}(X) \leq 4\mathbb{E}\zeta_{slim}(X)$.
        \item $\mathbb{E}\eta_{minsize}(X) \leq \mathbb{E}\iota_{insize}(X) \leq 3\mathbb{E}\eta_{minsize}(X)$.
        \item $\mathbb{E}\eta_{minsize}(X) \leq \mathbb{E}\iota_{insize}(X) \leq \mathbb{E}\tau_{thin}(X) (+1)$ (+1 is only for the case of a discrete graph).
    \end{enumerate}
\end{coro}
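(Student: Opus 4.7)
The plan is to take each pointwise inequality (for a fixed random realization of $\Delta(x,y,z)$, including the random choice of geodesic segments in the graph case) and promote it to an expectation via linearity. Every statement in the corollary reduces, triangle-by-triangle, to an inequality already available from Lemma~\ref{lemm:geo-tri-equiv} or from the trivial comparisons between the four quantities.

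For Part~1, I would combine the earlier remark $\slim(\Delta(x,y,z)) \le \thin(\Delta(x,y,z))$ with Part~1 of Lemma~\ref{lemm:geo-tri-equiv}, which gives $\thin(\Delta(x,y,z)) \le 4\slim(\Delta(x,y,z))$ for each realized triangle. Taking expectation over the random choice of geodesic segments yields $\zeta(x,y,z)\le\tau(x,y,z)\le 4\zeta(x,y,z)$, and a second expectation over iid $x,y,z$ gives the claim. Part~2 is the same argument with $\minsize$ in place of $\slim$ and $\insize$ in place of $\thin$: the trivial direction $\minsize(\Delta)\le\insize(\Delta)$ holds because the inscribed triple $(m_{yz},m_{zx},m_{xy})$ is one feasible choice in the infimum defining $\minsize$, and Part~2 of Lemma~\ref{lemm:geo-tri-equiv} supplies $\insize(\Delta)\le 3\minsize(\Delta)$.

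For Part~3, the left inequality $\mathbb{E}\eta_{minsize}(X)\le\mathbb{E}\iota_{insize}(X)$ is already contained in Part~2. For the right inequality I need the pointwise bound $\insize(\Delta(x,y,z))\le\thin(\Delta(x,y,z))$ (with an additive $+1$ in the graph case). In the geodesic setting this is immediate from the definitions: the inscribed points $m_y\in[z,x]$ and $m_z\in[x,y]$ satisfy $d(x,m_y)=d(x,m_z)=\langle y,z\rangle_x$, which is precisely the hypothesis of the thin condition at the vertex $x$, so $d(m_y,m_z)\le\thin(\Delta)$; the other two pairs follow by symmetry and the maximum of these distances is $\insize(\Delta)$.

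The only genuine subtlety is the discrete graph case of Part~3, where $\langle y,z\rangle_x$ may be a half-integer so that $m_y,m_z$ are located at edge-midpoints rather than vertices, and the thin condition only constrains vertex pairs. The fix is a short rounding argument: choose vertices $v\in[x,y]$ and $w\in[x,z]$ at the integer distance $\lfloor\langle y,z\rangle_x\rfloor$ from $x$, so that $d(v,m_z)\le\tfrac12$ and $d(w,m_y)\le\tfrac12$, and then apply the thin condition to $(v,w)$ to obtain $d(m_y,m_z)\le d(m_y,w)+d(w,v)+d(v,m_z)\le \thin(\Delta)+1$. Doing the same for the other two pairs gives $\insize(\Delta)\le\thin(\Delta)+1$ in the graph case. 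Taking expectation over the random segments and then over $x,y,z$ delivers the stated bound; this rounding step is the one place where care is needed, but it is elementary.
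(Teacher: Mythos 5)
Your proposal is correct and follows the same route the paper intends: the corollary is stated without proof as an immediate consequence of taking expectations of the triangle-by-triangle inequalities from Lemma~\ref{lemm:geo-tri-equiv} together with the trivial comparisons $\slim \le \thin$ and $\minsize \le \insize$. Your explicit verification that $\insize(\Delta) \le \thin(\Delta)$ via the inscribed points $m_{xy}, m_{zx}$ at common distance $\langle y,z\rangle_x$ from $x$, and the half-integer rounding that produces the $+1$ in the graph case, correctly supplies the one detail the paper leaves implicit.
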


\subsection{Slim/thin triangles, on average, imply hyperbolicity, on average}
First, we begin with the proposition that the average four-point hyperbolicity condition is implied by the other (average) conditions, as we expect from the usual definitions. The main idea of the proof is to follow the proof details from \cite{soto2011quelques} and ~\cite{bridson2013metric} and simply take \emph{expectations}. For a discrete graph, however, we must make suitable, if straightforward modifications. In all cases, we assume that we draw points at random from a non-degenerate distribution.

\begin{thm} Given a geodesic space $(X,d)$,
    \begin{enumerate}
        \item $\mathbb{E}\delta_{hyp}(X) \leq 2 \mathbb{E}\zeta_{slim}(X)$ holds.
        \item $\mathbb{E}\delta_{hyp}(X) \leq \mathbb{E}\iota_{insize}(X) \leq \mathbb{E}\tau_{thin}(X)$ holds.
        \item $\mathbb{E}\delta_{hyp}(X) \leq 2 \mathbb{E}\eta_{minsize}(X) \leq 2 \mathbb{E}\iota_{insize}(X)$ holds.
    \end{enumerate}

Given a (connected) graph $G = (V,E)$,
    \begin{enumerate}
        \item $\mathbb{E}\delta_{hyp}(G) \leq 2 \mathbb{E}\zeta_{slim}(G) + \frac{1}{2}$ holds.
        \item $\mathbb{E}\delta_{hyp}(G) \leq \mathbb{E}\tau_{thin}(G) + \frac{1}{2}$ holds.
        \item $\mathbb{E}\delta_{hyp}(G) \leq 2 \mathbb{E}\eta_{minsize}(G) \leq 2 \mathbb{E}\iota_{insize}(G)$ holds.
    \end{enumerate}
\end{thm}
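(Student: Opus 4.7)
The overall strategy for all six inequalities is to establish a pointwise bound that relates $\fp(x,y,z,w)$ for a fixed $4$-tuple to the slimness, thinness, insize, or minsize of one or two geodesic triangles on three of the four points, and then to take expectations over the iid samples. Because each subtriple of an iid $4$-tuple (e.g., $(x,y,z)$ and $(x,z,w)$) is itself an iid triple with the original distribution, any pointwise inequality of the form $\fp(x,y,z,w) \leq c_1 \, Q(\Delta_1) + c_2 \, Q(\Delta_2)$, where $Q \in \{\zeta,\tau,\iota,\eta\}$ and each $\Delta_i$ is a triangle on three of the four points, yields $\mathbb{E}[\fp(x,y,z,w)] \leq (c_1+c_2)\,\mathbb{E}[Q]$ by exchangeability and linearity of expectation.

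For part~(1), I would adapt Soto's proof that a $\delta$-slim space is $2\delta$-hyperbolic~\cite{soto2011quelques}. Given $(x,y,z,w)$ with the sums $P,Q,R$ of opposite-side distances satisfying $P \geq Q \geq R$, one considers the geodesic quadrilateral with diagonal $[x,z]$ and applies slimness to the two triangles $\Delta(x,y,z)$ and $\Delta(x,z,w)$, obtaining $\fp(x,y,z,w) \leq \zeta(\Delta(x,y,z)) + \zeta(\Delta(x,z,w))$ pointwise. Averaging yields $\mathbb{E}\delta_{hyp}(X) \leq 2\mathbb{E}\zeta_{slim}(X)$. For part~(2), the inequality $\iota \leq \tau$ is pointwise from the definitions, since the inscribed triple realizing insize lies within the thinness bound on every pair of sides; averaging gives $\mathbb{E}\iota_{insize}(X) \leq \mathbb{E}\tau_{thin}(X)$. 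For $\mathbb{E}\delta_{hyp}(X) \leq \mathbb{E}\iota_{insize}(X)$, I would adapt the Bridson-Haefliger argument~\cite{bridson2013metric}, which pointwise produces $\fp(x,y,z,w) \leq \iota(\Delta)$ for a single triangle $\Delta$ on three of the four points by tracking how the inscribed triple of $\Delta$ shifts along its sides when the fourth point is introduced.

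For part~(3), $\eta \leq \iota$ is immediate since minsize is an infimum of diameters of triples on the sides while insize picks one particular such triple. Going from minsize to the four-point condition through Lemma~\ref{lemm:geo-tri-equiv} (insize $\leq 3\,$minsize) would only give a factor of $3$, so I would give a direct two-triangle argument: using minsize witnesses $p_x,p_y,p_z$ for $\Delta(x,y,z)$ and $q_x,q_y,q_w$ for $\Delta(x,y,w)$, observe that $p_z,q_w\in[x,y]$ are comparable via the triangle inequality, and deduce a pointwise bound $\fp(x,y,z,w)\leq \eta(\Delta(x,y,z)) + \eta(\Delta(x,y,w))$, which averages to $2\,\mathbb{E}\eta_{minsize}(X)$. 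The graph statements follow the same pointwise arguments, except that the midpoints and slimness witnesses produced by the proofs may lie in the interior of an edge; rounding each such point to a nearest vertex costs at most $\tfrac{1}{2}$ in distance, accumulating to the additive $\tfrac{1}{2}$ correction in parts~(1) and~(2) of the graph statement. Part~(3) in the graph case needs no such correction because minsize is attained at vertex triples, as noted in the remark following the definition of minsize.

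The main obstacle I expect is obtaining the sharp pointwise bound in part~(3): the naive route through Lemma~\ref{lemm:geo-tri-equiv} only yields a factor of $3$, and a careful direct analysis of how the two minsize configurations for $\Delta(x,y,z)$ and $\Delta(x,y,w)$ interact along the common side $[x,y]$ is needed to shave this down to~$2$. The graph-case rounding is straightforward in comparison, but requires some bookkeeping to confirm that the discretization contributes at most a single $\tfrac{1}{2}$ rather than compounding.
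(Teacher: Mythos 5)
Your overall architecture (pointwise bounds extracted from the classical proofs, then expectations over iid samples) matches the paper's, and your instinct in part~(3) to argue directly with two minsize configurations rather than route through Lemma~\ref{lemm:geo-tri-equiv} is exactly what the paper does. But there is a genuine gap in your expectation step. Your stated principle --- that a pointwise bound $\fp(x,y,z,w)\le c_1Q(\Delta_1)+c_2Q(\Delta_2)$ yields $\mathbb{E}[\fp]\le(c_1+c_2)\mathbb{E}[Q]$ by exchangeability --- is only valid when $\Delta_1,\Delta_2$ are the \emph{same} two triangles for every configuration. In your argument they are not: the choice of diagonal (hence of the two triangles) is dictated by the WLOG assumption on which of $P,Q,R$ is maximal, and that event is correlated with the slimness of the triangles adjacent to the maximal pairing. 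Writing $\mathbb{E}[\fp]=\sum_\sigma\mathbb{E}[\fp\,\mathds{1}_{A_\sigma}]$ over the three pairings, you would need $\mathbb{E}[(\slim(\Delta_{\sigma,1})+\slim(\Delta_{\sigma,2}))\mathds{1}_{A_\sigma}]$ to factor through $\mathbb{P}(A_\sigma)$, which it need not; without further work your configuration-dependent bound only yields $\mathbb{E}\delta_{hyp}\le 4\,\mathbb{E}\zeta_{slim}$ (by dominating the chosen pair with the sum over all four triangles).

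The paper closes this by running the argument \emph{twice} for each configuration, once for each edge of the maximal pairing: under $d(x,y)+d(z,w)\ge\max(Q,R)$ it derives both $\fp\le\slim(\Delta(x,y,z))+\slim(\Delta(x,y,w))$ and $\fp\le\slim(\Delta(z,w,x))+\slim(\Delta(z,w,y))$, and averages them to obtain the configuration-independent, fully symmetric pointwise bound $\fp(x,y,z,w)\le\tfrac12\left[\zeta(x,y,z)+\zeta(x,y,w)+\zeta(x,z,w)+\zeta(y,z,w)\right]$. Only then does taking expectations legitimately give $\tfrac12\cdot4\,\mathbb{E}\zeta_{slim}=2\,\mathbb{E}\zeta_{slim}$; the analogous symmetrization produces the constants in parts~(2) and~(3) as well (the paper's displayed targets are $\tfrac14\sum\iota$ and $\tfrac12\sum\eta$ over all four triangles, not a single-triangle or single-pair bound). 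Your graph-case discussion is otherwise in the right spirit, though note the paper handles part~(2) on graphs by invoking thinness at points at distance $\lfloor\langle x,y\rangle_w\rfloor$ rather than by rounding an insize triple, and obtains the additive $\tfrac12$ from the one-unit imbalance in the choice of $v$ rather than from accumulated rounding of several witnesses.
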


\begin{proof}
First, assume that $(X,d)$ is a geodesic space. The overall strategy is as follows. First, randomly sample $x,y,z$ and $w$. Next, we again \emph{randomly} sample geodesic paths between all pairs of those points, in order to constitute geodesic triangles $\Delta(x,y,z), \Delta(x,y,w), \Delta(x,z,w),$ and $\Delta(y,z,w)$. Then, we bound $\delta_{fp}(x,y,z,w)$ in terms of some constants (such as slimness, minsize, etc.) over these triangles. Finally, take the expectation to conclude the proof.
\begin{enumerate}
    \item We will show that
    \[ \delta_{fp}(x,y,z,w) \leq \frac{1}{2} \left[ \zeta(x,y,z) + \zeta(x,y,w) + \zeta(x,z,w) + \zeta(y,z,w) \right]. \]
    Without loss of generality, assume $d(x,y) + d(z,w) \geq \max(d(x,z) + d(y,w) , d(x,w) + d(y,z))$. 
    Pick $v \in [x,y]$ such that $d(x,z) - d(x,v) = d(y,z) - d(y,v) = \langle x,y \rangle_z$. Then we have $d(x,y) + d(z,t) \geq d(x,z) + d(y,t) = d(x,t) + d(y,z)$.
    Next, by assumption, there exists $t \in [x,w] \cup [y,w]$ such that $d(w,t) \leq \slim(\Delta(x,y,w))$. wlog assume $t \in [x,w]$. Then we have
    \begin{align*}
        d(x,y) + d(w,v) = d(x,v) + d(v,y) + d(w,v) & \leq (d(x,t) + d(t,v)) + d(y,v) + (d(t,v) + d(t,w)) \\
        & \leq d(x,w) + d(y,v) + 2 \slim(\Delta(x,y,w)).
    \end{align*}
    Similarly, one can show (because $d(y,v) + d(x,z) = d(x,v) + d(y,z)$,)
    \[ d(x,y) + d(z,v) \leq d(y,v) + d(x,z) + 2 \slim(\Delta(x,y,z)) = d(x,v) + d(y,z) + 2 \slim(\Delta(x,y,z)) .\]
    This shows that
    \begin{align*}
        d(x,y)+d(z,w) & \leq (d(x,y)+d(z,v)) + (d(x,y)+d(w,v)) - d(x,y) \\
        & \leq (d(x,v) + d(y,z) + 2 \slim(\Delta(x,y,z))) + (d(x,w) + d(y,v) + 2 \slim(\Delta(x,y,w))) - d(x,y) \\
        & = d(y,z) + d(x,w) + 2 (\slim(\Delta(x,y,z)) + \slim(\Delta(x,y,w))),
    \end{align*}
    which shows that $fp(x,y,z,w) \leq \slim(\Delta(x,y,z)) + \slim(\Delta(x,y,w))$. Taking the expectation yields $fp(x,y,z,w) \leq \zeta(x,y,z) + \zeta(x,y,w)$.
    
    We can repeat the argument above by suitably choosing taking $v \in [z,w]$. Then we have $fp(x,y,z,w) \leq \zeta(z,w,x) + \zeta(z,w,y)$ and leveraging the two inequalities proves the claim. The statement comes from taking the expectation (over the i.i.d.~samples $x,y,z,w$).
    \item Similarly, we will show
     \[ \delta_{fp}(x,y,z,w) \leq \frac{1}{4} \left[ \iota(x,y,z) + \iota(x,y,w) + \iota(x,z,w) + \iota(y,z,w) \right]. \]
     Without loss of generality, assume $d(x,z) + d(y,w) \geq d(x,w) + d(y,z) \geq d(x,y) + d(z,w)$. Pick $x' \in [x,w]$, $y' \in [y,w]$ such that $d(x',w) = d(y',w) = \langle x,y \rangle_w$ and $y'' \in [y,w]$, $z' \in [z,w]$ such that $d(y'',w) = d(z', w) = \langle y,z \rangle_w$. By assumption, we have $d(y', w) \geq d(y'', w)$. Then
     \begin{align*}
         d(x,z) & \leq d(x,x') + d(x',y') + d(y',y'') + d(y'',z') + d(z', z) \\
         & \leq d(x,x') + \insize(\Delta(x,y,w)) + d(y', y'') +  \insize(\Delta(y,z,w)) + d(z', z) \\
         & = \langle y,w \rangle_x + (\langle x,y \rangle_w - \langle y,z \rangle_w) + \langle y,w \rangle_z + \insize(\Delta(x,y,w)) + \insize(\Delta(y,z,w)) \\
         & = d(x,w) + d(y,z) - d(y,w) + \insize(\Delta(x,y,w)) + \insize(\Delta(y,z,w)),
     \end{align*}
     which immediately shows that $\delta_{fp}(x,y,z,w) \leq \frac{1}{2} \left[ \insize(\Delta(x,y,w)) + \insize(\Delta(y,z,w)) \right]$. Taking the expectation yields $\delta_{fp}(x,y,z,w) \leq \frac{1}{2} \left[ \iota(x,y,w) + \iota(y,z,w) \right]$.
     Again, we repeat the argument on $[x,z]$ and achieve $\delta_{fp}(x,y,z,w) \leq \frac{1}{2} \left[ \iota(x,y,z) + \iota(x,z,w) \right]$, which completes the proof.
     \item We will show
     \[ \delta_{fp}(x,y,z,w) \leq \frac{1}{2} \left[ \eta(x,y,z) + \eta(x,y,w) + \eta(x,z,w) + \eta(y,z,w) \right]. \]
     Again without loss of generality, assume $d(x,z) + d(y,w) \geq \max(d(x,y) + d(z,w) , d(x,w) + d(y,z))$. By assumption, pick $p_x \in [y,z], p_y \in [z,x], p_z \in [x,y]$ and $q_x \in [z,w], q_z \in [w,x], q_w \in [x,z]$ so that $\diam(\{p_x, p_y, p_z\}) = \minsize(\Delta(x,y,z))$ and $\diam(\{q_x, q_y, q_z\}) = \minsize(\Delta(x,z,w))$. WLOG assume $d(x, p_y) \leq d(x, q_w)$. Then
     \[d(y,w) \leq d(y, p_x) + d(p_x, p_y) + d(p_y, q_w) + d(q_w, q_z) + d(q_z, w)\]
     and
     \[d(x,z) + d(p_y, q_w) = d(x, q_w) + d(p_y, z) \leq d(x, q_z) + d(q_z, q_w) + d(p_y, p_x) + d(p_x, z),\]
     which shows
     \begin{align*}
        d(y,w) + d(x,z) & \leq d(y, p_x) + d(p_x, z) + d(x, q_z) + d(q_z, w) + 2 (d(p_x, p_y) + d(q_w, q_z)) \\
        & \leq d(y,z) + d(x,w) + 2 (\minsize(\Delta(x,y,z)) + \minsize(\Delta(x,z,w))).
     \end{align*}
     Hence, $\delta_{fp}(x,y,z,w) \leq \eta(x,y,z) + \eta(x,z,w)$ holds, and repeating the argument completes the proof.
\end{enumerate}
On a graph $G = (V,E)$, we need to modify the proof slightly as follows.
\begin{enumerate}
    \item It may be the case that we cannot explicitly pick $v \in [x,y]$ with the desired identity, if the corresponding Gromov product is a half-integer. Instead, we will assert $|(d(x,z) - d(x,v)) - (d(y,z) - d(y,v))| \leq 1$ and the constant 1/2 comes from the fact that two distance sum may differ by at most 1. Still, we are able to show $\delta_{fp}(x,y,z,w) \leq \slim(\Delta(x,y,z)) + \slim(\Delta(x,y,w)) + \frac{1}{2}$.

    \item (Note that for this proof we will use the \emph{thinness} condition instead.) Here, we will choose $d(x',w) = d(y',w) = \lfloor \langle x,y \rangle_w \rfloor$ and $d(y'',w) = d(z', w) = \lfloor \langle y,z \rangle_w \rfloor$. Then, by the thinness condition, we bound 
    $d(x',y') \leq \thin(\Delta(x,y,w))$ and $d(y'', z') \leq \thin(\Delta(y,z,w))$ and show 
    \[ \delta_{fp}(x,y,z,w) \leq \frac{1}{2} \left[ \insize(\Delta(x,y,w)) + \insize(\Delta(y,z,w)) + \frac{1}{2} \right].
    \]

    \item Finally, note that by the definition of minsize, we do not need to pick some additional nodes differently simply because we are in the discrete graph setting when it comes to the minsize. Therefore, $\mathbb{E}\delta_{hyp}(G) \leq 2 \mathbb{E}\eta_{minsize}(G) \leq 2 \mathbb{E}\iota_{insize}(G)$ still holds.
\end{enumerate}
\end{proof}

\begin{remark}
    The reader might wonder if we define $\zeta$ (and other constants) differently, how do our equivalences change? For example, we could define $\zeta(x,y,z)$ as the infimum (i.e., the best case) among all slimnesses over possible $\Delta(x,y,z)$. In this case, unfortunately, the desired equivalence inequalities do not hold. For example, if we consider the 4-cycle $C_4$, then we can always \emph{pick} the geodesic triangle so that its slimness is always zero. The graph, however, is not a tree (far from it) and it defies intuition that its hyperbolicity is zero. 
\end{remark}

\section{Non-equivalences}
\label{sec:non_equivs}

In this section, we detail the examples that demonstrate the non-equivalences of our definitions; that is, definition A does not imply defiition B up to multiplication by a (small) constant. . These results are suprising given the equivalences amongst all of the usual definitions of hyperbolicity. As there are a number of cases to examine, we break them up into subsections. In each subsection, we provide families of examples that illustrate the non-equivalences.

\subsection{Average hyperbolicity does not imply average slimness}

We answer, in the negative, whether hyperbolicity on average implies that triangles are slim/thin on average. We witness a family of graphs in which the answer to this question is no, average hyperboliticy does not necessarily imply thin or slim triangles on average. 


\begin{figure}[ht]
    \centering
    \def\svgwidth{0.33\columnwidth}
\begingroup%
  \makeatletter%
  \providecommand\color[2][]{%
    \errmessage{(Inkscape) Color is used for the text in Inkscape, but the package 'color.sty' is not loaded}%
    \renewcommand\color[2][]{}%
  }%
  \providecommand\transparent[1]{%
    \errmessage{(Inkscape) Transparency is used (non-zero) for the text in Inkscape, but the package 'transparent.sty' is not loaded}%
    \renewcommand\transparent[1]{}%
  }%
  \providecommand\rotatebox[2]{#2}%
  \newcommand*\fsize{\dimexpr\f@size pt\relax}%
  \newcommand*\lineheight[1]{\fontsize{\fsize}{#1\fsize}\selectfont}%
  \ifx\svgwidth\undefined%
    \setlength{\unitlength}{434.88488866bp}%
    \ifx\svgscale\undefined%
      \relax%
    \else%
      \setlength{\unitlength}{\unitlength * \real{\svgscale}}%
    \fi%
  \else%
    \setlength{\unitlength}{\svgwidth}%
  \fi%
  \global\let\svgwidth\undefined%
  \global\let\svgscale\undefined%
  \makeatother%
  \begin{picture}(1,0.78490094)%
    \lineheight{1}%
    \setlength\tabcolsep{0pt}%
    \put(0.10185091,0.61746526){\color[rgb]{0,0,0}\makebox(0,0)[lt]{\lineheight{1.25}\smash{\begin{tabular}[t]{l}$a_i$\end{tabular}}}}%
    \put(0.38526266,0.55875669){\color[rgb]{0,0,0}\makebox(0,0)[lt]{\lineheight{1.25}\smash{\begin{tabular}[t]{l}$\textcolor{blue}{b_{ijk}}$\end{tabular}}}}%
    \put(0.82964227,0.61978808){\color[rgb]{0,0,0}\makebox(0,0)[lt]{\lineheight{1.25}\smash{\begin{tabular}[t]{l}$a_j$\end{tabular}}}}%
    \put(0,0){\includegraphics[width=\unitlength,page=1]{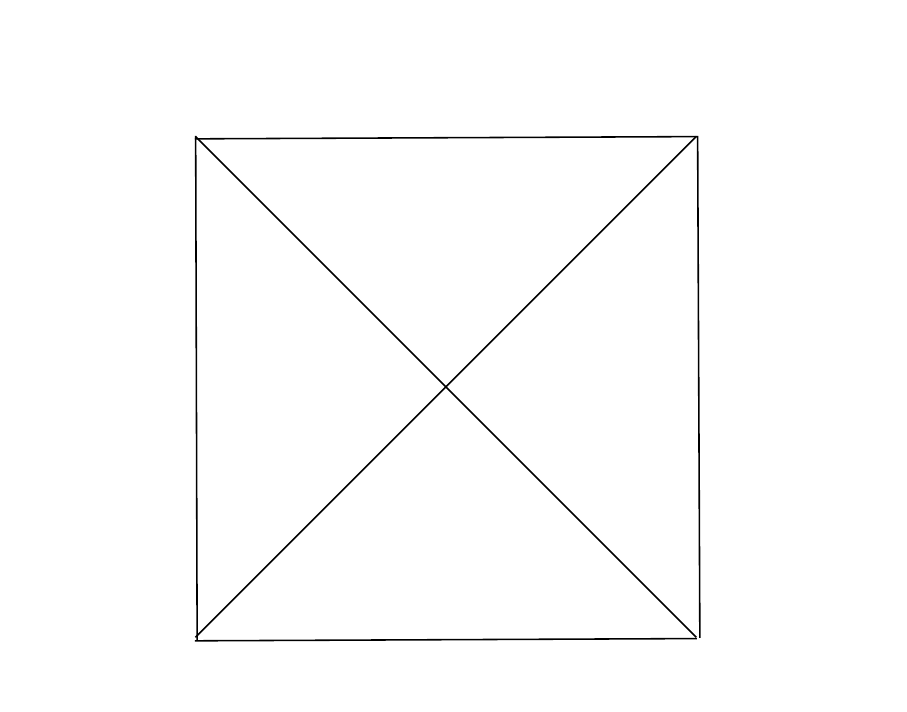}}%
    \put(0.24320809,0.69669717){\makebox(0,0)[lt]{\lineheight{1.25}\smash{\begin{tabular}[t]{l}{\scriptsize \textcolor{red}{$d(a_i, a_j) = 2 \lfloor \sqrt{n} \rfloor + 1$}}\end{tabular}}}}%
    \put(0,0){\includegraphics[width=\unitlength,page=2]{Gn.pdf}}%
  \end{picture}%
\endgroup%

    \caption{This figure shows the construction of the family of graphs $G_n$ in which average hyperbolicity does not necessarily imply thin or slim triangles on average.}
    \label{fig:extreme_gn}
\end{figure}

We construct a family of graphs $\{G_n\}$ for $n \geq 3$ as follows. Denote $m := 2 \lfloor \sqrt{n} \rfloor + 1$.
\begin{itemize}
    \item $V(G_n) = \{a_i | 1 \leq i \leq n\} \cup \{b_{ijk} | 1 \leq i < j \leq n \text{ and } 1 \leq k \leq m \}$.
    \item For any $1 \leq i < j \leq n$, connect $a_i - b_{ij1} - b_{ij2} - \cdots - b_{ijm} - a_j$.
\end{itemize}
Note that $|V(G_n)| = O(n^{2.5})$. We see that this graph family essentially has a \emph{gap} between the average hyperbolicity and the average slimness.
\begin{thm}
    The family of graphs $G_n$ satisfies
    \begin{enumerate}
        \item $\mathbb{E}\delta_{hyp}(G_n) \to 0$.
        \item $\mathbb{E}\zeta_{slim}(G_n), \mathbb{E}\eta_{minsize}(G_n) \to \infty$.
    \end{enumerate}
\end{thm}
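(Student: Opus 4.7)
The plan is to partition the sample space according to the combinatorial structure of the chosen vertices in the $K_n$-skeleton underlying $G_n$. The dominant ``generic'' event forces the four-point condition to vanish exactly while simultaneously forcing $\slim$ and $\minsize$ to be $\Omega(\sqrt{n})$.

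First I would observe that $|V(G_n)| = n + m\binom{n}{2} = \Theta(n^{2.5})$, so a uniformly chosen vertex is a $b$-vertex with probability $1 - O(n^{-3/2})$. For a $b$-vertex $x = b_{ijk}$, write $s_x := \min(k, m+1-k) \leq (m+1)/2$ and let $a(x) \in \{a_i, a_j\}$ denote its closer $a$-endpoint. The key distance formula I would establish is: whenever $b$-vertices $x, y$ lie on vertex-disjoint $K_n$-edges, every shortest $x$--$y$ path exits via $a(x)$, traverses the subdivided $K_n$-edge between $a(x)$ and $a(y)$ (length $m+1$), and enters via $a(y)$, so that
\[ d(x,y) = s_x + (m+1) + s_y. \]
This follows because any alternative route through a third $a$-vertex has length at least $2(m+1)$.

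For part (1), sample $x_1, \ldots, x_4$ i.i.d.\ and call the sample \emph{generic} if the four are $b$-vertices on pairwise vertex-disjoint $K_n$-edges; a union bound gives generic probability $1 - O(1/n)$. In the generic case $d(x_\ell, x_m) = s_\ell + s_m + (m+1)$ depends only on the sum of the two $s$'s, so
\[ P = Q = R = s_1+s_2+s_3+s_4 + 2(m+1), \]
and $\fp(x_1, \ldots, x_4) = 0$ exactly. Exceptional outcomes contribute at most $\diam(G_n) = O(\sqrt{n})$ each, giving $\mathbb{E}\delta_{hyp}(G_n) \leq O(\sqrt{n}) \cdot O(1/n) = o(1)$.

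For part (2), sample $x, y, z$ i.i.d.\ and condition on the analogous generic event (probability $1 - O(1/n)$). The geodesic triangle $\Delta(x, y, z)$ then consists of three short appendages of lengths $s_x, s_y, s_z$ from $x, y, z$ to three distinct corners $a(x), a(y), a(z)$, joined by three subdivided $K_n$-edges of length $m+1$ between pairs of corners. For slimness, take $u$ to be a midpoint vertex of the $a(x)$--$a(y)$ subdivided edge; then $u \in [x,y]$, and because $[y,z]$ and $[z,x]$ meet this edge only at its corners, the nearest point on $[y,z] \cup [z,x]$ to $u$ is $a(y)$ or $a(x)$, at distance $(m+1)/2$. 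Hence $\slim(\Delta(x,y,z)) \geq (m+1)/2 = \Omega(\sqrt{n})$. For minsize, a direct computation gives $\langle x,z \rangle_y = s_y + (m+1)/2$ and cyclic analogs, so the inscribed-triple points $m_{xy}, m_{yz}, m_{zx}$ land exactly at the midpoints of the three pure-triangle subdivided edges, at pairwise distance $m+1$. Thus $\insize(\Delta) = m+1$, and Lemma~\ref{lemm:geo-tri-equiv}(2) yields $\minsize(\Delta) \geq (m+1)/3 = \Omega(\sqrt{n})$. Averaging over the generic event gives $\mathbb{E}\zeta_{slim}(G_n), \mathbb{E}\eta_{minsize}(G_n) = \Omega(\sqrt{n}) \to \infty$.

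The main obstacle is the bookkeeping around the distance formula $d(x,y) = s_x + (m+1) + s_y$: one must rule out shortcuts through third $a$-vertices (each alternative has length at least $2(m+1)$, so this is routine) and handle the measure-zero ties when some $k$ equals $(m+1)/2$, where the random geodesic-selection rule of Section~3.1 activates but does not alter the asymptotic slim or minsize lower bound.
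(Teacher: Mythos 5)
Your proposal is correct and follows essentially the same route as the paper: identify the generic configuration (all sampled points are $b$-vertices on pairwise disjoint subdivided $K_n$-edges), derive the distance formula $d(x,y)=s_x+(m+1)+s_y$ so that $P=Q=R$ and $\fp$ vanishes exactly, bound the exceptional event by $O(1/n)\cdot O(\sqrt{n})$, and then show the generic geodesic triangle (three legs attached to a central subdivided triangle) is $\Omega(\sqrt{n})$-fat. In fact you treat the minsize lower bound more explicitly than the paper does, which only argues slimness for part (2). One small imprecision: the inscribed points $m_{xy}, m_{yz}, m_{zx}$ sit at the midpoints of the central edges only when $s_x=s_y=s_z$; in general they are offset by the differences of the $s$-values, but since the three corner-to-corner arc lengths between them sum to $3(m+1)$ and each arc is shorter than $2(m+1)$, the diameter of the triple is still at least $m+1$, so your conclusion $\insize(\Delta)\geq m+1$ and hence $\minsize(\Delta)=\Omega(\sqrt{n})$ survives.
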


\begin{proof}
    First, we show that $\mathbb{E}\delta_{hyp}(G_n) \to 0$. We randomly choose four points $x,y,z,w$ and consider the following \emph{generic} configuration:
    \begin{enumerate}
        \item None of the four points are the vertices $a_i$ for some $i$. This occurs with probability $1 - O(n^{-1.5})$.
        \item The point $x$ lies between $[a_{p(x)}, a_{q(x)}]$ for some $p(x), q(x) \in [n]$, and say $h_x :=d(x, a_{p(x)}) < d(x, a_{q(x)})$ (therefore, $p(x) < q(x)$ may not be true). Then with probability $1 - O(n^{-1})$, all of the 8 points $p(\cdot),q(\cdot)$ for each of $x,y,z,w$ are distinct.
    \end{enumerate}
    Overall, the \emph{generic} configuration occurs with probability $1 - O(n^{-1})$. In this case, the geodesic between two points is always unique and should pass $a_{p}$. Therefore,
    \[ d(x,y) = d(x, a_{p(x)}) + d(a_{p(x)}, a_{p(y)}) + d(a_{p(y)}, y) =m + h_x + h_y, \]
    and the similar formula holds for all other pairwise distances. Hence,
    \[ d(x,y) + d(z,w) = d(x,w)+d(y,z)=d(x,z)+d(y,w) = 2m + h_x + h_y + h_z + h_w,\]
    so that $\delta_{fp}(x,y,z,w) = 0$.

    On the other hand, if the point configuration is not \emph{generic}, then we bound $\delta_{fp}(x,y,z,w)$ by $\tfrac{1}{2}\diam(G_n)$, which is $4 \lfloor \sqrt{n} \rfloor+1 = 2m-1$. This implies that
    \[ \mathbb{E}\delta_{hyp}(G_n) \leq O(\tfrac{1}{n}) \cdot (2m-1) = O(\tfrac{1}{n} \cdot \sqrt{n}) = o(1).\]

\begin{figure}[h!]
    \centering
    \def\svgwidth{0.50\columnwidth}
    \import{./figs/}{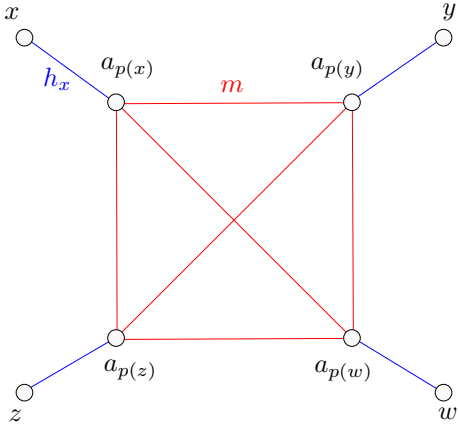}
    \caption{This figure shows the generic configuration for the four point condition in the construction of the family $G_n$}
    \label{fig:generic-fp}
\end{figure}

    For the next proposition, it is enough to check that $\mathbb{E}\zeta_{slim}(G_n) \to \infty$. When we randomly sample three points $x,y,z$ in this setting, the \emph{generic} configuration is as follows:
    \begin{enumerate}
        \item None of the three points are the vertices $a_i$ for some $i$. This occurs with probability $1 - O(n^{-1.5})$.
        \item Again, the point $x$ lies between $[a_{p(x)}, a_{q(x)}]$ for some $p(x), q(x) \in [n]$, and say $d(x, a_{p(x)}) < d(x, a_{q(x)})$. Then with probability $1 - O(n^{-1})$, all 6 indices $p(\cdot), q(\cdot)$ for each of $x,y,z$ are distinct.
    \end{enumerate}
    Hence, the \emph{generic} configuration occurs with probability $1 - O(n^{-1})$. However, in this case, the geodesic triangle $\Delta(x,y,z)$ is unique and $\zeta(x,y,z) = \slim(\Delta(x,y,z)) = \lfloor \sqrt{n} \rfloor = \frac{m-1}{2}.$ Therefore,
    \[ \mathbb{E}\zeta_{slim}(G_n) \geq (1 - O(\tfrac{1}{n})) \cdot \frac{m-1}{2} = \Omega(\sqrt{n}).\]
\end{proof}


\subsection{Average slimness and average minsize are not necessarily equivalent}

In this subsection, we show that the slimness and minsize conditions are not necessarily equivalent in the average case. We have seen that slimness implies a bound on the minsize (and insize); this example indicates that the $\delta$-slim and $\delta$-thin conditions are the most strongest conditions amongst the equivalent hyperbolicity notions.

Fix a positive integer $M$. Then we construct a graph family $\{H_{M,n}\}$ for $n \geq 2$ as follows:
\begin{itemize}
    \item $V(H_{M,n})$ consists of what we call \emph{junction} nodes $\{c_1, \cdots, c_n\}$, nodes $\{a_{i,s}\}$ for $1 \leq i \leq n$, $1 \leq s \leq M \cdot n$ and nodes $\{b_{i,j,k}\}$ for $1 \leq i \leq n-1$, $1 \leq j \leq 2M$ and $k = 1,2$.
    \item For any $1 \leq i \leq n-1$, there are two paths between $c_i$ and $c_{i+1}$ with length $2M$: $c_i - b_{i,1,1} - b_{i,2,1} - \cdots - b_{i,2M,1} - c_{i+1}$ (informally call these upward paths) and $c_i - b_{i,1,2} - b_{i,2,2} - \cdots - b_{i,2M,2} - c_{i+1}$ (downward paths, respectively). There is an edge between $c_i$ and $a_{i,s}$ for any $i,s$.
\end{itemize}
Note that $|V(H_{M,n})| = O(n^{2})$. 

\begin{thm}
    Given any positive integer $M$, the graph family $H_{M,n}$ satisfies
    \begin{enumerate}
        \item $\mathbb{E}\zeta_{slim}(H_{M,n}) \to M$.
        \item $\mathbb{E}\eta_{minsize}(H_{M,n}) \to 0$.
    \end{enumerate}
\end{thm}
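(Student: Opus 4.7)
My plan is to reduce both claims to a \emph{generic} event and a case analysis. Since $H_{M,n}$ has $Mn^2$ leaves $a_{i,s}$ out of a total of $Mn^2+O(Mn)$ vertices, a uniform sample is a leaf with probability $1-O(1/n)$, so with probability $1-O(1/n)$ all three i.i.d.\ samples $x,y,z$ are leaves attached to three distinct junctions, say $a_{i,s},a_{j,t},a_{k,u}$ with $i<j<k$. I call this the \emph{generic} configuration and organize both bounds around it.

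For the minsize claim, the key structural observation is that every geodesic between two leaves at junctions $\alpha<\beta$ must pass through every intermediate junction $c_\alpha,c_{\alpha+1},\ldots,c_\beta$, irrespective of the random upward/downward spine choices. In the generic configuration the middle junction $c_j$ therefore lies on all three sides, so $\minsize(\Delta(x,y,z))=0$ by taking $x'=y'=z'=c_j$. I will then walk through the non-generic cases---two leaves at the same junction, one sample being a junction, or one sample being a spine vertex $b_{l,m,*}$ with $l$ outside the positional range $[i,k]$ of the other two leaves---and observe that in each a common junction still sits on all three sides, giving minsize $0$. The only surviving case is when exactly one sample is a spine vertex $b_{l,m,*}$ with $i\le l<k$ (overall probability $O(1/n)$), and there I will bound minsize by $\min(m,2M-m)\le M$ via $c_l$ or $c_{l+1}$ on the opposing side and the spine vertex itself at the other two coordinates. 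Summing gives $\mathbb{E}\eta_{minsize}(H_{M,n})=O(M/n)=o(1)$.

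For the slimness claim, I will first establish the universal upper bound $\slim(\Delta(x,y,z))\le M$ for every triangle in $H_{M,n}$ and every choice of geodesic paths. Given any $w$ on a side: if $w$ is a junction on its geodesic it lies on at least one other side too (distance $0$); if $w$ is a spine vertex at position $m$ of a segment, the nearest point on a covering side is either on the same spine (distance $0$) or on the opposite spine of the same segment (distance $\min(m,2M-m)\le M$); and for points on a truncated initial or final segment, the endpoint itself lies on another side. Equality is attained generically: in the generic configuration, for each of the $k-i$ consecutive segments between $c_i$ and $c_k$, exactly two of the three sides traverse it and choose upward or downward independently with probability $\tfrac12$. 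If all segments match, the shared sub-paths force $\slim(\Delta)=0$; if any segment mismatches, the midpoint of one side's path through that segment witnesses $\slim(\Delta)=M$. Since the $k-i$ mismatch events are independent Bernoulli$(\tfrac12)$, $\mathbb{E}[\slim\mid\text{generic},i,k]=M(1-2^{-(k-i)})$, and averaging over $(i,k)$ leaves an error of order $\mathbb{E}[2^{-(k-i)}]=O(1/n^2)$. Combined with the non-generic contribution, bounded by $M\cdot O(1/n)=o(1)$ using the universal slim bound, we conclude $\mathbb{E}\zeta_{slim}(H_{M,n})\to M$.

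The main obstacle I expect is the universal slim bound together with the non-generic minsize enumeration, both of which require a careful parametrization of every vertex by its ``position'' on the junction line so that the junction sequence of each geodesic can be read off explicitly. Once these case analyses are set, the rest reduces to the routine tail estimate $P(k-i\le D)=O(D^2/n^2)$ for the range of three i.i.d.\ uniform samples in $[n]$, which ensures both $\mathbb{E}[2^{-(k-i)}]\to 0$ and that the non-generic bad events have vanishing measure.
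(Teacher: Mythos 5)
Your proposal is correct and follows essentially the same route as the paper: a generic all-leaves configuration, the middle junction as a Steiner point giving minsize $0$, the independent upward/downward segment choices whose mismatch forces slimness exactly $M$, and universal upper bounds ($\le M$ for slimness, $O(M)$ for minsize) to absorb the non-generic mass. The only variation is that you replace the paper's conditioning on junction separation $\ge \log_2 n$ by directly computing $\mathbb{E}[\slim \mid \text{generic}] = M\bigl(1-2^{-(k-i)}\bigr)$ and bounding $\mathbb{E}[2^{-(k-i)}]=O(1/n^2)$, a minor but slightly sharper bookkeeping choice.
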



\begin{figure}[ht]
    \centering
    \def\svgwidth{0.8\columnwidth}
\begingroup%
  \makeatletter%
  \providecommand\color[2][]{%
    \errmessage{(Inkscape) Color is used for the text in Inkscape, but the package 'color.sty' is not loaded}%
    \renewcommand\color[2][]{}%
  }%
  \providecommand\transparent[1]{%
    \errmessage{(Inkscape) Transparency is used (non-zero) for the text in Inkscape, but the package 'transparent.sty' is not loaded}%
    \renewcommand\transparent[1]{}%
  }%
  \providecommand\rotatebox[2]{#2}%
  \newcommand*\fsize{\dimexpr\f@size pt\relax}%
  \newcommand*\lineheight[1]{\fontsize{\fsize}{#1\fsize}\selectfont}%
  \ifx\svgwidth\undefined%
    \setlength{\unitlength}{1391.84692383bp}%
    \ifx\svgscale\undefined%
      \relax%
    \else%
      \setlength{\unitlength}{\unitlength * \real{\svgscale}}%
    \fi%
  \else%
    \setlength{\unitlength}{\svgwidth}%
  \fi%
  \global\let\svgwidth\undefined%
  \global\let\svgscale\undefined%
  \makeatother%
  \begin{picture}(1,0.44245941)%
    \lineheight{1}%
    \setlength\tabcolsep{0pt}%
    \put(0,0){\includegraphics[width=\unitlength,page=1]{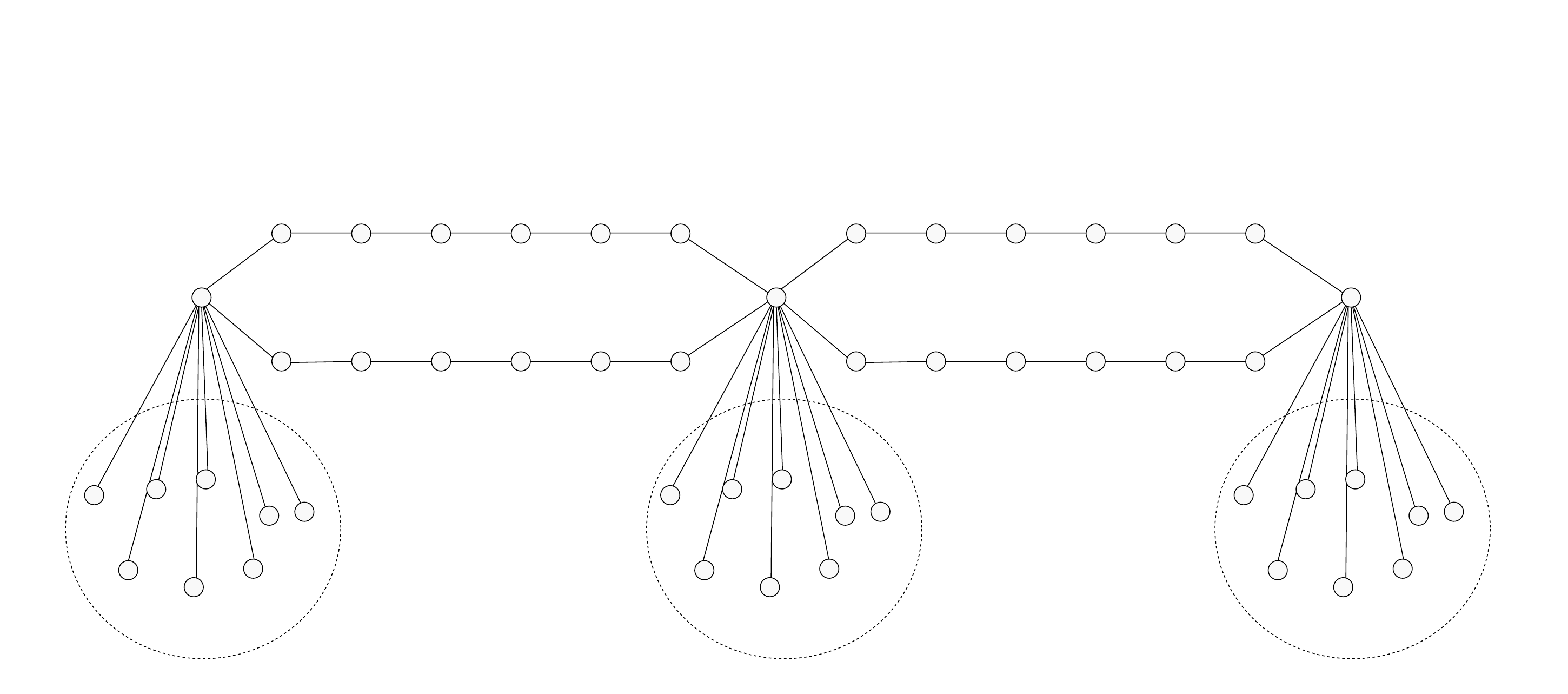}}%
    \put(0.48296543,0.26729716){\makebox(0,0)[lt]{\lineheight{1.25}\smash{\begin{tabular}[t]{l}$c_i$\end{tabular}}}}%
    \put(0.53784257,0.06713873){\makebox(0,0)[lt]{\lineheight{1.25}\smash{\begin{tabular}[t]{l}$a_{i,s}$\end{tabular}}}}%
    \put(0.5799981,0.31108773){\makebox(0,0)[lt]{\lineheight{1.25}\smash{\begin{tabular}[t]{l}$b_{i,k,1}$\end{tabular}}}}%
    \put(0.5799981,0.18632599){\makebox(0,0)[lt]{\lineheight{1.25}\smash{\begin{tabular}[t]{l}$b_{i,k,2}$\end{tabular}}}}%
    \put(0,0){\includegraphics[width=\unitlength,page=2]{Hmn.pdf}}%
  \end{picture}%
\endgroup%

    \caption{This figure depicts the graph family $H_{M,n}$ in which average slimnes and average minsize are not equivalent.}
    \label{fig:Hmn}
\end{figure}

\begin{proof}
    Our approach is to observe which configuration is generic and to argue that it occurs with high probability. Let us randomly sample $x,y,z$ in $V(H_{M,n})$ and consider the following generic configuration.
    \begin{enumerate}
        \item All points $x,y,z$ are in the ``cluster'' $\{a_{i,s}\}$, so that each is adjacent to $c_{i(x)}, c_{i(y)}, c_{i(z)}$, respectively.
        \item $|i(x) - i(y)|, |i(y) - i(z)|, |i(z) - i(x)| \geq \log_2 n$ holds.
    \end{enumerate}
    Note that first condition holds with probability at least $1 - O(n^{-1})$. Conditional on that, we see that the conditional distribution of $i(x)$s is uniform in $[n]$. Therefore, the second condition holds with (conditional) probability at least $1 - O(\log n/n)$. Thus, the \emph{generic} configratuion occurs with probability at least $1 - O(\log n/n)$.

    Now, consider a geodesic triangle $\Delta(x,y,z)$ in the generic configuration. Without loss of generality assume $i(x) < i(y) < i(z)$. Then we see that regardless of the choice of geodesic, $[x,y], [y,z]$ and $[z,x]$ always passes $c_{i(y)}$. In fact, $c_{i(y)}$ actually becomes the Steiner node among these points. Therefore, $\eta_{minsize}(x,y,z) = \minsize(\Delta(x,y,z)) = 0$ if $x,y,z$ are generic. On the other hand, it is easy to verify that every geodesic triangle in $H_{M,n}$ has minsize at most $2M$. Therefore,
    \[ \mathbb{E}\eta_{minsize}(H_{M,n}) \leq O(\tfrac{\log n}{n}) \cdot 2M = o(1),\]
    as $n \to \infty$.

    Now we evaluate the slimness of the generic triangle $\Delta(x,y,z)$. This depends on the choice of geodesic; let us consider $[x,y]$ and $[x,z]$ in $\Delta(x,y,z)$. When each geodesic runs from $c_{i(x)}$ to $c_{i(y)}$, if at least one of path choices between two adjacent junction nodes differs (i.e., one goes upward and the other goes downward), then the slimness of $\Delta(x,y,z)$ should be at least $M$, as the midpoint of each segment is distant from the other segment with distance $M$. As we have assumed three indices differ by at least $\log_2 n$, this phenomenon happens on at least $1 - n^{-1}$ portion of the geodesic triangles $\Delta(x,y,z)$ so that
    \[ \zeta_{slim}(x,y,z) \geq (1 - n^{-1}) \cdot M.\]
    Therefore,
    \[ \mathbb{E}\zeta_{slim}(H_{M,n}) \geq (1 - O(\tfrac{\log n}{n})) \cdot (1 - \tfrac{1}{n})M \to M,\]
    as $n \to \infty$. On the other hand, we can check that every geodesic triangle in $H_{M,n}$ has slimness at most $M$. Therefore, $\mathbb{E}\zeta_{slim}(H_{M,n}) \to M$ as desired.
\end{proof}

\subsection{Generic geodesic space examples}

In this subsection, we consider generic geodesic spaces, rather than families of graphs per se, and find more extreme examples.

\begin{exmp}
    Consider a Gaussian $N(0, \frac{I_d}{\sqrt{d}})$ random variable in $\mathbb{R}^d$. Note that Euclidean space is always non hyperbolic and we cannot bound the hyperbolicity constant $\delta$ in any sense in this space. However, if we sample points from a Gaussian distribution in $\mathbb{R}^d$ and compute the Gromov hyperbolicity constant for these points (embedded in $\mathbb{R}^d$, we see that as $d \to \infty$, $\delta_{fp} \to 0$. This is the distribution of distances between points $d(x,y)$ converges to $\sqrt{2}$. The geodesic triangle in $\mathbb{R}^d$, however, is \emph{nowhere} slim (or thin), so that $\zeta_{{\rm slim}}, \tau_{{\rm thin}}, \eta_{{\rm minsize}}, \iota_{{\rm insize}} \to \sqrt{2}/2$. 
\end{exmp}
This example highlights an important phenomenon. The first is that when one is computing hyperbolicity values numerically on finite data sets, it is important to be careful drawing conclusions. One's data set might be samples drawn from a Gaussian distribution in high dimensional Euclidean space and one might be tempted to conclude with a small Gromov hyperbolicity value, that the points are from a hyperbolic space when, in fact, they are not. 

\begin{exmp} 
    Another extreme example is a space equipped with a distribution concentrated on exactly 3 points uniformly, say $a,b,c \in X$. Then by definition, $\delta_{fp}(x,y,z,w) = 0$ always, as there would be a duplicate among four points. On the other hand, we see that $\mathbb{E}\zeta_{{\rm slim}}$ is $2/9 \cdot \zeta_{{\rm slim}}(a,b,c)$, and the similar thing holds on all other constants. This means that a single geodesic triangle can serve an example on the average case as well. For example, consider the graph $H$ such that $\Delta(x,y,z)$ has minsize(insize) 1 and slimness $n/2$. Equip this graph with a distribution concentrated on $x,y,z$ and we can see that \emph{the bound on minsize} does not imply \emph{the bound on slimness}.
\end{exmp}

\begin{figure}[ht]
    \centering
    \def\svgwidth{0.40\columnwidth}
\begingroup%
  \makeatletter%
  \providecommand\color[2][]{%
    \errmessage{(Inkscape) Color is used for the text in Inkscape, but the package 'color.sty' is not loaded}%
    \renewcommand\color[2][]{}%
  }%
  \providecommand\transparent[1]{%
    \errmessage{(Inkscape) Transparency is used (non-zero) for the text in Inkscape, but the package 'transparent.sty' is not loaded}%
    \renewcommand\transparent[1]{}%
  }%
  \providecommand\rotatebox[2]{#2}%
  \newcommand*\fsize{\dimexpr\f@size pt\relax}%
  \newcommand*\lineheight[1]{\fontsize{\fsize}{#1\fsize}\selectfont}%
  \ifx\svgwidth\undefined%
    \setlength{\unitlength}{393.69640921bp}%
    \ifx\svgscale\undefined%
      \relax%
    \else%
      \setlength{\unitlength}{\unitlength * \real{\svgscale}}%
    \fi%
  \else%
    \setlength{\unitlength}{\svgwidth}%
  \fi%
  \global\let\svgwidth\undefined%
  \global\let\svgscale\undefined%
  \makeatother%
  \begin{picture}(1,0.77632356)%
    \lineheight{1}%
    \setlength\tabcolsep{0pt}%
    \put(0,0){\includegraphics[width=\unitlength,page=1]{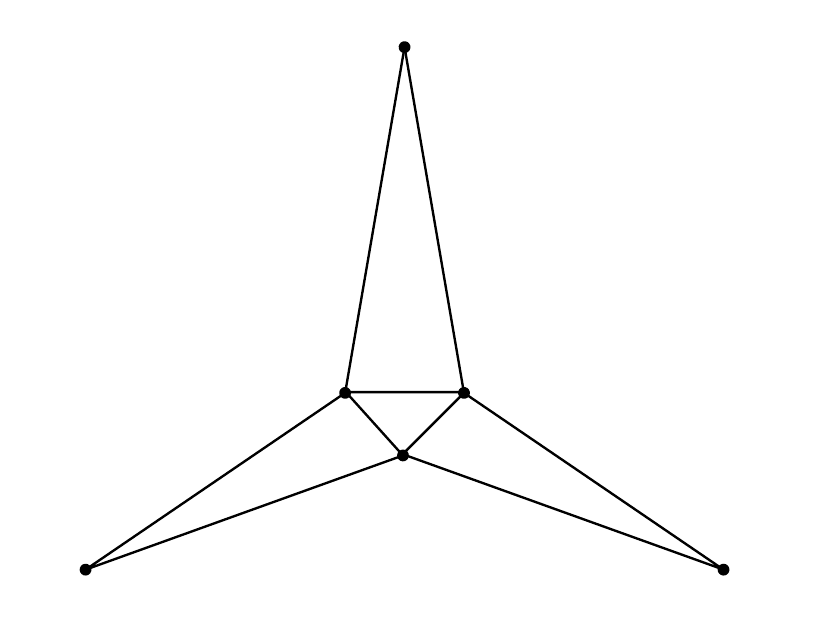}}%
    \put(0.36140976,0.514111){\makebox(0,0)[lt]{\lineheight{1.25}\smash{\begin{tabular}[t]{l}\textcolor{blue}{$n$}\end{tabular}}}}%
    \put(0.24097105,0.21585038){\makebox(0,0)[lt]{\lineheight{1.25}\smash{\begin{tabular}[t]{l}\textcolor{blue}{$n$}\end{tabular}}}}%
    \put(0,0){\includegraphics[width=\unitlength,page=2]{fat-slim-one.pdf}}%
    \put(0.3149947,0.11678929){\makebox(0,0)[lt]{\lineheight{1.25}\smash{\begin{tabular}[t]{l}\textcolor{blue}{$n$}\end{tabular}}}}%
    \put(0.61326656,0.11760575){\makebox(0,0)[lt]{\lineheight{1.25}\smash{\begin{tabular}[t]{l}\textcolor{blue}{$n$}\end{tabular}}}}%
    \put(0.69137242,0.22483125){\makebox(0,0)[lt]{\lineheight{1.25}\smash{\begin{tabular}[t]{l}\textcolor{blue}{$n$}\end{tabular}}}}%
    \put(0.56781825,0.51357802){\makebox(0,0)[lt]{\lineheight{1.25}\smash{\begin{tabular}[t]{l}\textcolor{blue}{$n$}\end{tabular}}}}%
  \end{picture}%
\endgroup%

    \caption{A geodesic triangle which is fat (as its slimness is $n/2$), but still has its insize 1}
    \label{fig:fat-slim-one}
\end{figure}

\section{Random models}

In the previous section, we defined the average notions of hyperbolicity and presented several examples that illustrate the \emph{discrepancy} amongst the average definitions. This discrepancy does not exist for the usual definitions of hyperbolicity. In this section, we show that those examples are, in fact, not special and are, in fact, fairly generic. In particular, we show that on a variety of random graphs the average definitions of hyperbolicity are not equivalent. The two families of random graphs we examine are random regular graphs and Erd\"os-Renyi random graphs.
\subsection{Random regular graph $\RRG(n,d)$}
In this section, we analyze the average hyperbolicity of the random regular graph model $\RRG(n,d)$ with $d \geq 3$. One might think that such graphs are \emph{not} hyperbolic on average as \cite{benjamini2011geodesics} showed that the random regular graph is \emph{not} hyperbolic (according to the usual definitions) and, in fact, the hyperbolicity constant is at least $\frac{1}{2} \log_{d-1}n - \omega(n)$ with high probability. This value is large considering that any graph is trivially $\frac{1}{2} \diam(G)$-hyperbolic and the diameter of a random regular graph is bounded by $\frac{1}{2} \log_{d-1}n + O(\log \log n)$). Furthermore, \cite{tucci2013non} investigated a technique to show these graphs are not hyperbolic using the observation that hyperbolic graphs must admit traffic congestion (or have a ``core''). 

We demonstrate that despite these results for the usual definitions of hyperbolicity on random regular graphs, there is an asymptotic gap between the average hyperbolicity and the average slimness. It is well-known that the random graph $\RRG(n,d)$ is connected with high probability (i.e., such event happens with probability $\to 1$), so that we condition on that.

First, we show that the average hyperbolicity is surprisingly bounded by $o(\omega(n))$. This is because the \emph{typical distance} between two nodes is reasonably concentrated, so that the generic four point condition is not a large enough multiple of the number of vertices. More precisely, we consider the \emph{distribution of shortest path lengths (DSPL)}, the distribution of distance between two fixed points. Our result builds on the recent analysis: 
\begin{lemm}\label{lemm:DSPL-RRG-bounds}
    \cite{tishby2022mean}~For fixed $x_1, x_2$, consider $L_n := d(x_1, x_2)$ in $G_n \in \RRG(n,d)$. Then the variance of $L_n$, $\Var(L_n)$, satisfies
    \begin{equation}
        \Var(L_n) = \frac{\pi^2}{6[\log(d-1)]^2} + \frac{1}{12} + 
             O \left( \frac{\log n}{n} \right).
        \label{eqn:DSPL}
    \end{equation}
\end{lemm}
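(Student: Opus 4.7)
The plan is to analyze $L_n := d(x_1, x_2)$ via a breadth-first search (BFS) from $x_1$ in the configuration-model realization of $\RRG(n,d)$. Set $B_\ell := |\{v : d(x_1,v)=\ell\}|$ and $N_\ell := \sum_{k \leq \ell} B_k$. A standard second-moment/switching argument on the half-edge pairing yields, throughout the tree-like regime $\ell \leq (\tfrac{1}{2}-\epsilon)\log_{d-1} n$, the concentration $B_\ell = d(d-1)^{\ell-1}(1+o(1))$ with high probability, and therefore $N_\ell = \tfrac{d}{d-2}(d-1)^\ell(1+o(1))$.

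Conditionally on the BFS having explored $N_\ell$ vertices without hitting $x_2$, the chance that $x_2$ is absorbed at the next level is proportional to the number of free half-edges at the frontier, which after summing the geometric series gives
\[ P(L_n > \ell) = \exp\!\left(-\frac{d}{d-2}\cdot\frac{(d-1)^\ell}{n}\right) + O\!\left(\frac{\log n}{n}\right), \]
uniformly in $\ell$. This identifies $L_n$ as a discrete Gumbel-type variable concentrated near $\ell^* := \log_{d-1}\!\bigl(\tfrac{(d-2)n}{d}\bigr)$, with $P(L_n > \ell^* + k) \approx \exp\!\bigl(-(d-1)^k\bigr)$ for integer shifts $k$.

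To extract the variance I would decompose $L_n = \ell^* + W + U$, where $W$ has the continuous Gumbel law $P(W>w) = \exp(-(d-1)^w)$ (so that $\Var(W) = \pi^2/(6[\log(d-1)]^2)$) and $U := L_n - \ell^* - W \in [0,1)$ is the rounding offset. Applying Euler--Maclaurin to the truncated moment sums $\sum_\ell \ell^k\, P(L_n = \ell)$ for $k = 1, 2$ against the continuous Gumbel integral produces exactly the additional $\Var(U) = 1/12$ term (this is the classical fact that discretizing a smooth continuous distribution inflates the variance by $1/12$ to leading order via the second Bernoulli coefficient), with the cross-covariance between $W$ and $U$ vanishing to sufficient order as $\ell^* \to \infty$.

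The main obstacle is upgrading these heuristics to a uniform $O(\log n/n)$ error bound. Three pieces need careful attention: (i) sharp concentration of $B_\ell$ right up to the level where $N_\ell$ reaches order $n$, for which the switching argument must be carried out quantitatively to control cycle-creating pairings in the BFS exploration; (ii) tail control showing that values of $L_n$ with $|L_n - \ell^*| \gtrsim \log \log n$ contribute only $o(\log n/n)$ to the second moment, so that truncation is safe before applying Euler--Maclaurin; and (iii) explicit bookkeeping of the Euler--Maclaurin remainder against the Gumbel density, which decays fast enough that the third-order correction is absorbed into the claimed $O(\log n/n)$ error. Combining these pieces yields the stated expansion.
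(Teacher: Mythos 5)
First, a point of comparison: the paper does not prove this lemma at all --- it is imported verbatim from \cite{tishby2022mean} and used as a black box (indeed, only the consequence $\Var(L_n)=O(1)$ is ever used downstream). So you are not reconstructing an internal argument but the cited reference's, and your sketch does follow the same overall route as that reference: an exploration-process analysis giving $\mathbb{P}(L_n>\ell)\approx\exp(-\tfrac{d}{d-2}(d-1)^{\ell}/n)$, identification of a discrete Gumbel law centered at $\ell^{*}=\log_{d-1}(\tfrac{(d-2)n}{d})$, and extraction of the two variance contributions.

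The genuine gap is in your step (iii), and it cannot be repaired to yield the stated $O(\log n/n)$ error. The lattice spacing here is $1$ while the standard deviation of the continuous Gumbel $W$ is $\pi/(\sqrt{6}\log(d-1))$, i.e.\ of the same order; Sheppard's ``$+1/12$'' is therefore not a small-mesh asymptotic but must come from Poisson summation, which produces, in addition to $\Var(W)+\tfrac{1}{12}$, oscillatory terms periodic in the fractional part of $\ell^{*}$ with amplitude governed by $|\Gamma(1+2\pi i k/\log(d-1))|$. These are numerically minute (of order $e^{-\pi^{2}/\log(d-1)}$) but do not decay with $n$, so they cannot be absorbed into $O(\log n/n)$; the expansion holds only modulo such terms. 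Relatedly, the decomposition $L_n=\ell^{*}+W+U$ with ``cross-covariance vanishing'' is not tenable: $U$ is a deterministic function of $W$, $\operatorname{Cov}(W,U)$ is genuinely nonzero, and it is the combination $\Var(U)+2\operatorname{Cov}(W,U)$ whose phase average equals $\tfrac{1}{12}$ --- treating $U$ as an independent uniform rounding error assumes what must be proved. Finally, even granting uniform additive errors $O(\log n/n)$ in the tail probabilities, summing $\sum_{\ell}(2\ell+1)\mathbb{P}(L_n>\ell)$ over $\ell\lesssim\log_{d-1}n$ gives an error $O(\log^{3}n/n)$ in the second moment, not $O(\log n/n)$, and $\mathbb{P}(L_n=\infty)$ (disconnection) must be handled separately. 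For the purposes of this paper none of this matters: a crude exploration bound giving $\Var(L_n)=O(1)$ suffices for the theorem that invokes the lemma.
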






\begin{thm}
    Given $G_n \in \RRG(n,d)$ for $d \geq 3$ and any growth function $\omega(n) \to \infty$,
    \[\mathbb{E}\left(\mathbb{E}\left(\delta_{fp}(G_n)\right)\right) = o(\omega(n)),\]
    with high probability.
\end{thm}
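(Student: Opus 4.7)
The plan is to prove the stronger statement that, under the joint distribution of $G_n \sim \RRG(n,d)$ and iid random samples $x,y,z,w$, we have $\mathbb{E}[\fp(x,y,z,w)] = O(1)$; the theorem then follows by Markov's inequality applied over the graph randomness. First I would write $\fp(x,y,z,w) = \tfrac{1}{2}[\max(P,Q,R) - \operatorname{med}(P,Q,R)]$ for the three distance sums $P := d(x,y)+d(z,w)$, $Q := d(x,z)+d(y,w)$, $R := d(x,w)+d(y,z)$, and use the elementary bound $\max - \operatorname{med} \leq \max - \min \leq |P-Q| + |Q-R| + |P-R|$ to obtain
\[
\fp(x,y,z,w) \leq \tfrac{1}{2}\bigl(|P-Q| + |Q-R| + |P-R|\bigr).
\]
Since $x,y,z,w$ are iid, the three sums $P, Q, R$ are exchangeable, so $\mathbb{E}[P-Q] = 0$ and Cauchy--Schwarz gives $\mathbb{E}|P-Q| \leq \sqrt{\Var(P-Q)}$ (similarly for the other two differences).

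The crucial ingredient is then to control $\Var(P-Q)$ via Lemma~\ref{lemm:DSPL-RRG-bounds}. The ensemble $\RRG(n,d)$ is vertex-transitive in distribution, so for iid samples (conditional on the negligible event that all four vertices are distinct) the joint distribution of $d(x,y)$ under random graph and random points matches that of $L_n$ for fixed endpoints; hence $\Var(d(x,y)) = O(1)$. Writing $P - Q = d(x,y) + d(z,w) - d(x,z) - d(y,w)$ as a linear combination with coefficients $\pm 1$ of four such distances, a Cauchy--Schwarz bound on the pairwise covariances gives $\Var(P-Q) = O(1)$, and analogously for the other pairs. Combining these steps yields $\mathbb{E}_{G_n,\,x,y,z,w}[\fp] = O(1)$ on the joint distribution.

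Finally, given any growth function $\omega(n) \to \infty$, set $\omega'(n) := \sqrt{\omega(n)}$, which still diverges but satisfies $\omega'(n) = o(\omega(n))$. Applying Markov's inequality to the graph-measurable random variable $Y(G_n) := \mathbb{E}_{x,y,z,w}[\fp(x,y,z,w) \mid G_n]$ gives $\mathbb{P}(Y(G_n) > \omega'(n)) \leq \mathbb{E}[Y(G_n)]/\omega'(n) = O(1/\omega'(n)) \to 0$, so $Y(G_n) = o(\omega(n))$ with high probability, as claimed. The main delicate point is justifying the transfer of Lemma~\ref{lemm:DSPL-RRG-bounds} from fixed endpoints to sums of distances whose endpoints overlap (for example $d(x,y)$ and $d(x,z)$); vertex-transitivity is automatic for the configuration model, and the Cauchy--Schwarz bound on pairwise covariances keeps each $\Var(P-Q)$ of constant order, so the explicit constant in Lemma~\ref{lemm:DSPL-RRG-bounds} propagates without blow-up.
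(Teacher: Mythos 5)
Your proposal is correct and follows essentially the same route as the paper: reduce to fixed vertices by homogeneity of $\RRG(n,d)$, bound $\fp$ by absolute differences of the distance sums $P,Q,R$, control those via the $O(1)$ variance of the shortest-path-length distribution (Lemma~\ref{lemm:DSPL-RRG-bounds}) together with Cauchy--Schwarz on the covariances of non-independent distances, and finish with Markov's inequality over the graph randomness. The only differences are cosmetic (you center via $\Var(P-Q)$ and use the sum of all three pairwise differences, the paper centers at $2\mathbb{E}(L_n)$; your intermediate threshold $\sqrt{\omega(n)}$ is an equivalent way of running the final Markov step).
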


\begin{proof}
    First, we observe that because our random graph model is \emph{homogeneous}, for fixed $x_1, x_2, x_3, x_4 \in V(G_n)$,
    \[
        \mathbb{E}\left(\mathbb{E}(\delta_{fp}(G_n))\right) = 
            \mathbb{E}\fp(x_1, x_2, x_3, x_4)
    \]
    (note that we condition $G_n$ being connected). Let $P = d(x_1, x_2) + d(x_3, x_4)$ and $Q = d(x_1, x_3) + d(x_2, x_4)$. By the definition of the distribution of the distance between two fixed points (in a random regular graph), both $d(x_1,x_2)$ and $d(x_3,x_4)$ are distributed as $L_n$. Furthermore, even though $d(x_1, x_2)$ and $d(x_3, x_4)$ might not be independent, 
    \[
        \mathbb{E}[P] = \mathbb{E}[d(x_1, x_2) + d(x_3, x_4)] = 2 \mathbb{E}(L_n) \quad \text{ and } \quad \Var[P] \leq 4 \Var(L_n),
    \]
    and similarly for $\mathbb{E}[Q]$ and $\\Var[Q]$. Therefore,
    \begin{align*}
        \mathbb{E}\fp(x_1, x_2, x_3, x_4) &\leq \tfrac{1}{2} \mathbb{E}|P-Q| \leq \tfrac{1}{2} \left[ \mathbb{E}|P-2 \mathbb{E}(L_n)| +  \mathbb{E}|Q-2 \mathbb{E}(L_n)| \right] \\
        & \leq \tfrac{1}{2} \left[ \Var[P]^{1/2} + \Var[Q]^{1/2} \right] \leq 2 \Var(L_n)^{1/2}.
    \end{align*}
    Lemma~\ref{lemm:DSPL-RRG-bounds} tells us that the right hand side of the above inequality is bounded above by Equation~\ref{eqn:DSPL}. Thus, given any $\omega(n) \to \infty$,
    \[ \mathbb{P}\Big( \mathbb{E}(\delta_{fp}(G_n)) \geq \omega(n) \Big) \leq \frac{2 \Var(L_n)^{1/2}}{\omega(n)} \to 0.\]
\end{proof}

We emphasize that we barely used the structure of quadruple of points and the four point condition to obtain the previous result; the bound on the distance distribution itself already provides a bound on the average Gromov hyperbolicity! 

To prove the next theorem, we build upon the ideas from \cite{tucci2013non} and \cite{li2015traffic}to show that \emph{most} geodesic triangles (in a random regular graph) are, in fact, extremely fat!

\begin{thm}
    There exists a constant $C_d >0$, which only depends on $d$ (for $d \geq 3$), such that for $G_n \in \RRG(n,d)$,
    \[ \mathbb{E}\left(\zeta_{{\rm slim}}(G_n)\right), \mathbb{E}\left(\eta_{{\rm minsize}}(G_n)\right) \geq \frac{1}{2} \log_{d-1}n - \frac{3}{2}  \log_{d-1} \log_{d-1}n - C_d,\]
    with high probability.
    \label{thm:fat_triangle}
\end{thm}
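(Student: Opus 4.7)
The plan is to lower-bound both $\zeta(x,y,z)$ and $\eta(x,y,z)$ for a typical random triple $(x,y,z)$ by combining the distance concentration of Lemma~\ref{lemm:DSPL-RRG-bounds} with the local tree-like structure of $\RRG(n,d)$. Write $L := \log_{d-1}n$ and $\Lambda := C_1 \log_{d-1}\log_{d-1}n$ for a sufficiently large constant $C_1$ to be chosen. I first isolate a high-probability event $\mathcal{T}_n$ on which (a) all three pairwise distances $d(x,y), d(y,z), d(z,x)$ lie in $[L-\Lambda, L+\Lambda]$, and (b) the balls $B_r(x), B_r(y), B_r(z)$ of radius $r := \tfrac{1}{2}L - \Lambda$ are pairwise disjoint trees. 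Item (a) follows by Chebyshev from Lemma~\ref{lemm:DSPL-RRG-bounds} together with a union bound, and (b) is standard for the configuration model: the expected number of short cycles inside a ball of radius $r$ is $O((d-1)^{2r}/n) = o(1)$ once $C_1$ is large, and disjointness of the three balls is an $o(1)$ event by the same count.

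For the slimness lower bound, I take $v$ to be the vertex on $[x,y]$ with $d(v,x) = \lfloor L_{xy}/2 \rfloor$, which under $\mathcal{T}_n$ lies near the boundary of $B_r(x)$. Inside $B_r(x)$ the geodesics $[x,y]$ and $[x,z]$ emerge along branches that share a prefix of random length $k_x$; configuration-model symmetry gives $\mathbb{P}[k_x \geq t] \leq (d-1)^{-t}$, so $k_x \leq \Lambda$ w.h.p., and hence $d(v, [x,z] \cap B_r(x)) \geq d(v,x) - k_x \geq r - O(\Lambda)$. To rule out $[x,z]$ approaching $v$ through the non-tree-like middle of the graph, I use a first-moment bound: $|B_{r-O(\Lambda)}(v)| \leq (d-1)^{r-O(\Lambda)}$, and for each vertex $u$ in this ball the probability that the length-$O(L)$ geodesic $[x,z]$ passes through $u$ is $O(L/n)$ by uniformity of the half-edge matching, so a union bound forces $d(v,[x,z]) \geq r-O(\Lambda)$. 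A symmetric argument handles $d(v,[y,z])$, giving $\slim(\Delta(x,y,z)) \geq r - O(\Lambda)$ on $\mathcal{T}_n$.

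For the minsize bound, the key observation is that if $(x',y',z') \in [y,z] \times [z,x] \times [x,y]$ realizes the infimum diameter $D$, then $D \geq d(x',z') \geq d(x',[x,y])$ and $D \geq d(x',y') \geq d(x',[z,x])$. Writing $a := d(y,x')$, the same tree and shared-prefix analysis applied at $y$ and at $z$ (with shortcuts outside the local trees again ruled out by the first-moment argument) yields $d(x',[x,y]) \geq a - k_y$ and $d(x',[z,x]) \geq L_{yz} - a - k_z$; summing, $d(x',[x,y]) + d(x',[z,x]) \geq L_{yz} - k_y - k_z \geq L - O(\Lambda)$, so $D \geq (L - O(\Lambda))/2 \geq r - O(\Lambda)$. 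Taking expectations over $(x,y,z)$ and combining with $\mathbb{P}(\mathcal{T}_n) \to 1$ yields both claimed bounds, with $C_d$ absorbing the $O(1)$ losses and the $\tfrac{3}{2}$ coefficient on $\log_{d-1}\log_{d-1}n$ accommodating the slack from $\Lambda$.

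The main obstacle is the first-moment step controlling geodesic behavior through the non-tree-like middle of the graph. Concretely, after exposing the three local trees as subgraphs of $G_n$, one must argue that the remaining half-edges are matched uniformly enough that each length-$O(L)$ geodesic avoids any prescribed set of $O(\sqrt{n}/(\log n)^{C})$ vertices with probability $1 - o(1)$, and that this statement is robust to the simultaneous conditioning imposed by the six geodesic segments emanating pairwise from $x,y,z$. This is where the bulk of the technical work sits, and it leverages standard tools from the random regular graph literature (configuration-model exposure, ball-size concentration) in the spirit of~\cite{tucci2013non, li2015traffic}.
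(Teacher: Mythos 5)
Your strategy (local tree structure at the three corners, quick divergence of geodesic prefixes, and a first-moment bound to exclude shortcuts through the non-tree-like bulk) is a genuinely different route from the paper's, which is a deterministic counting argument: fix a path system and a pivot $z$, color each pair $(x,y)$ with $\minsize(\Delta(x,y,z))\leq t$ by a witness vertex $p_z\in[x,y]$, bound the number of colors incident to a vertex by a $\delta$-neighborhood count, bound the multiplicity of any single color by the traffic $|T(c)|$ (Lemmas~\ref{lemm:traffic-bounds} and~\ref{lemm:T_bounds}), and extract the tail bound from the pigeonhole Lemma~\ref{lemm:colors-count} together with the diameter bound of Lemma~\ref{lemm:RRG-diameter-bounds}. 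The point of that design is that, conditional on the single high-probability event $\diam(G_n)\leq\log_{d-1}n+\log_{d-1}\log_{d-1}n+C_d$, everything else is a deterministic inequality valid for every graph of maximum degree $d$; no exposure of the configuration model and no conditioning on geodesics is ever needed.

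The gap in your proposal is precisely the step you defer to ``the bulk of the technical work.'' First, the union bound $|B_{r-O(\Lambda)}(v)|\cdot O(L/n)$ is a union over a \emph{random} set that is determined by the same graph as the geodesic $[x,z]$; the marginal statement ``a fixed vertex $u$ lies on $[x,z]$ with probability $O(L/n)$'' does not transfer to $\mathbb{E}\bigl[|B_{r'}(v)\cap[x,z]|\bigr]$ without a joint (second-moment or sequential-exposure) analysis, and that analysis must survive the conditioning created by having already exposed the three local trees, the six geodesic prefixes, and the location of $v$ itself (which is defined as the midpoint of $[x,y]$ and hence correlated with everything else). None of this is carried out, and it is exactly the part of the argument that can fail or lose logarithmic factors if done carelessly. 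Second, even granting that step, your error accounting does not deliver the stated second-order term: the event that all pairwise distances lie in $[L-\Lambda,L+\Lambda]$ is controlled only via Chebyshev from Lemma~\ref{lemm:DSPL-RRG-bounds}, so its failure probability is $\Theta(1/\Lambda^2)=\Theta(1/(\log\log n)^2)$; multiplying by the trivial bound $\minsize=O(\log n)$ on the complement costs $\Theta(\log n/(\log\log n)^2)$, which swamps the claimed $-\frac{3}{2}\log_{d-1}\log_{d-1}n$. (The one-sided lower tail $\mathbb{P}[d(y,z)<L-\Lambda]\leq\frac{d}{d-2}(d-1)^{-\Lambda}$, which is all you actually need, would fix this, but that is not the argument you give.) So as written the proposal establishes at best $\frac{1}{2}\log_{d-1}n-o(\log n)$ modulo the unproved first-moment step, whereas the paper's counting argument yields the full statement with the $-\frac{3}{2}\log_{d-1}\log_{d-1}n$ correction and with no configuration-model conditioning at all.
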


\begin{coro}
    For any $\epsilon > 0$, with high probability, most geodesic triangles in $G_n$ are $\delta$-fat with 
    \[   
        \delta = \frac{1}{2} \log_{d-1}n - ( \frac{3}{2} + \epsilon) \log_{d-1} \log_{d-1}n.
    \]
\end{coro}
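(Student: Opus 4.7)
The plan is to derive the corollary from Theorem \ref{thm:fat_triangle} via a reverse-Markov (Paley--Zygmund-type) argument that converts the expected-slimness lower bound into a positive probability that a uniformly random triangle is $\delta$-fat.

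First I would establish a deterministic upper bound on $\zeta(x,y,z)$. Given any side $[x,y]$ of a geodesic triangle and any point $w\in[x,y]$, the endpoints $x,y$ themselves lie on the two other sides, so $d(w,[y,z]\cup[z,x]) \le \min\{d(w,x),d(w,y)\} \le \tfrac{1}{2}d(x,y)$; hence $\slim(\Delta(x,y,z)) \le \tfrac{1}{2}\diam(G_n)$. The standard diameter estimate $\diam(G_n) \le \log_{d-1} n + O(\log\log n)$ for $\RRG(n,d)$ (with high probability) then yields a uniform bound $M := \tfrac{1}{2}\log_{d-1} n + O(\log\log n)$ on $\zeta(x,y,z)$; the same reasoning, combined with $\minsize \le \insize \le \thin$, gives an analogous bound on $\eta(x,y,z)$.

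Next, condition on the intersection of the two high-probability events: Theorem \ref{thm:fat_triangle}'s expectation lower bound and the diameter bound. Writing $S := \zeta(x,y,z)$ for a random triple, we have $S\in[0,M]$ pointwise and $\mathbb{E}[S] \ge \tfrac{1}{2}\log_{d-1} n - \tfrac{3}{2}\log_{d-1}\log_{d-1} n - C_d$. The reverse Markov identity $\mathbb{E}[S]\le \delta\,\mathbb{P}(S<\delta)+M\,\mathbb{P}(S\ge\delta)$ rearranges to
\[
    \mathbb{P}(S\ge\delta)\;\ge\;\frac{\mathbb{E}[S]-\delta}{M-\delta}\;\ge\;\frac{\epsilon\log_{d-1}\log_{d-1} n - C_d}{(\tfrac{3}{2}+\epsilon)\log_{d-1}\log_{d-1} n + O(\log\log n)}\;\to\;\frac{\epsilon}{\tfrac{3}{2}+\epsilon}>0,
\]
after substituting $\delta = \tfrac{1}{2}\log_{d-1} n - (\tfrac{3}{2}+\epsilon)\log_{d-1}\log_{d-1} n$. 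The identical computation with $S := \eta(x,y,z)$ handles the minsize statement.

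The main obstacle lies in the word ``most''. The reverse-Markov step yields only a positive fraction $\epsilon/(\tfrac{3}{2}+\epsilon)$ of $\delta$-fat triangles, which can be pushed above any constant strictly less than one by enlarging $\epsilon$, but does not directly deliver a $1-o(1)$ fraction. Upgrading to ``almost all'' would require revisiting the proof of Theorem \ref{thm:fat_triangle} and extracting a concentration statement rather than only an expectation bound---for instance, arguing via the congestion/core framework of \cite{tucci2013non,li2015traffic} that all but a vanishing fraction of triples in $\RRG(n,d)$ individually satisfy the fatness lower bound.
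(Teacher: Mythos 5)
There is a genuine gap, and you have correctly diagnosed it yourself: the reverse-Markov argument from the expectation bound of Theorem~\ref{thm:fat_triangle} together with the pointwise bound $\slim(\Delta)\le\tfrac12\diam(G_n)$ yields only a fraction
\[
\mathbb{P}(S\ge\delta)\;\ge\;\frac{\mathbb{E}[S]-\delta}{M-\delta}\;\approx\;\frac{\epsilon}{\tfrac32+\epsilon}
\]
of $\delta$-fat triangles. Since the corollary is asserted for \emph{every} $\epsilon>0$, and this fraction is below $\tfrac12$ whenever $\epsilon<\tfrac32$ and never approaches $1$, the approach cannot establish that \emph{most} (i.e.\ a $1-o(1)$ fraction of) triangles are $\delta$-fat. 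Enlarging $\epsilon$ does not help, because the claim for small $\epsilon$ is the strong one and must be proved for that same small $\epsilon$. So the proposal, as written, proves a strictly weaker statement.

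The missing ingredient does not require a new concentration argument via the congestion framework: it is already the central estimate inside the proof of Theorem~\ref{thm:fat_triangle}. There, before any integration, one has the tail bound (conditional on the high-probability diameter event)
\[
\mathbb{P}\bigl(\minsize(\Delta(x,y,z))\le t\bigr)\;\le\;\frac{1}{n}\,\frac{d^2}{(d-2)^2}\,(\diam(G_n)+1)\,(d-1)^{t+\diam(G_n)/2},
\]
and the expectation lower bound of the theorem is obtained by integrating this in $t$. The corollary follows by substituting $t=\delta=\tfrac12\log_{d-1}n-(\tfrac32+\epsilon)\log_{d-1}\log_{d-1}n$ directly into the tail bound: using $\diam(G_n)\le\log_{d-1}n+\log_{d-1}\log_{d-1}n+C_d$, one gets $(d-1)^{\delta}=n^{1/2}(\log_{d-1}n)^{-3/2-\epsilon}$ and hence
\[
\mathbb{P}\bigl(\minsize(\Delta(x,y,z))\le\delta\bigr)\;=\;O\!\left((\log_{d-1}n)^{1}\cdot(\log_{d-1}n)^{1/2}\cdot(\log_{d-1}n)^{-3/2-\epsilon}\right)\;=\;O\!\left((\log_{d-1}n)^{-\epsilon}\right)\;=\;o(1),
\]
which is exactly the assertion that all but a vanishing fraction of triangles are $\delta$-fat (and likewise for slimness, with the extra additive constant $1$). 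Your deterministic bound $\slim\le\tfrac12\diam$ and the conditioning on the diameter event are fine; the route through the expectation, however, discards precisely the quantitative tail information needed for the word ``most.''
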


To prove Theorem~\ref{thm:fat_triangle}, we use the following sequence of Lemmas. Denote $\Gamma_r(w) := \{x \in V: d(x,w) = r\}$ and $N_r(w) := \{x \in V: d(x,w) \leq r\}$ for $w \in V$ and any integer $r \geq 0$. 
\begin{lemm}
    If $G = (V,E)$ is a graph with maximal degree $d \geq 3$, then
    \[ |\Gamma_r(x)| \leq d(d-1)^{r-1}, \quad |N_r(x)| < \frac{d}{d-2}(d-1)^r\] for all $x \in V, r \geq 0$.
\end{lemm}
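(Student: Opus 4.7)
The plan is to establish the first inequality by a BFS-style induction on $r$, and then to deduce the second inequality by summing a finite geometric series. Both steps are standard and no serious obstacle is anticipated; the only subtlety is correctly handling the case $r=0$ (where the bound on $|\Gamma_r(x)|$ reads as $d/(d-1) \geq 1$ and is therefore trivially consistent, once one interprets $d(d-1)^{-1}$ for $r=0$ as a weak bound on $|\Gamma_0(x)|=1$).

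For the first bound, I would fix a root $x$, look at the BFS layers $\Gamma_0(x), \Gamma_1(x), \dots$, and argue inductively that each vertex $v \in \Gamma_{s-1}(x)$ contributes at most $d-1$ neighbors in $\Gamma_s(x)$, provided $s \geq 2$, since one of its at-most-$d$ neighbors must lie on a shortest path back toward $x$ (hence in $\Gamma_{s-2}(x)$ rather than $\Gamma_s(x)$). At the base, $|\Gamma_1(x)|\leq d$ since $x$ has at most $d$ neighbors. This yields $|\Gamma_s(x)| \leq (d-1) \cdot |\Gamma_{s-1}(x)|$ for $s \geq 2$ and hence $|\Gamma_r(x)| \leq d(d-1)^{r-1}$ for all $r \geq 1$ by iterating.

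For the second bound, I would simply sum the first bound across layers. We get
\[
|N_r(x)| \;=\; 1 + \sum_{s=1}^{r} |\Gamma_s(x)| \;\leq\; 1 + d \sum_{s=1}^{r} (d-1)^{s-1} \;=\; 1 + d \cdot \frac{(d-1)^r - 1}{d-2}.
\]
Rearranging the right-hand side,
\[
1 + \frac{d(d-1)^r - d}{d-2} \;=\; \frac{(d-2) + d(d-1)^r - d}{d-2} \;=\; \frac{d(d-1)^r - 2}{d-2} \;<\; \frac{d}{d-2}(d-1)^r,
\]
which gives the strict inequality as stated. The condition $d \geq 3$ is essential here to ensure $d-2 \geq 1$ so that the geometric sum makes sense and the constant $d/(d-2)$ is finite.

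The main (and only) obstacle is conceptually trivial: one must be careful to distinguish the maximal degree bound (which gives $d$ at the first step) from the subsequent bound $d-1$ (which applies once we are expanding a non-root vertex, whose back-edge to the previous layer consumes one of its neighbors). Everything else is bookkeeping on a geometric series. I would therefore expect the proof to be completed in a few lines, matching the standard argument for expansion in bounded-degree graphs (essentially the Moore bound).
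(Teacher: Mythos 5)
Your proof is correct and follows essentially the same route as the paper: a BFS/layer induction giving $|\Gamma_1(x)|\leq d$ and $|\Gamma_s(x)|\leq (d-1)|\Gamma_{s-1}(x)|$ for $s\geq 2$ (since each non-root vertex spends one neighbor on a back-edge), followed by summing the geometric series to get the strict bound on $|N_r(x)|$. No gaps; your handling of the $r=0$ edge case and the final rearrangement $1+d\frac{(d-1)^r-1}{d-2}=\frac{d(d-1)^r-2}{d-2}<\frac{d}{d-2}(d-1)^r$ are both fine.
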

\begin{proof} Given $x \in V$, consider a breadth first search tree on $G$ with root vertex $x$ for which $|\Gamma_r(x)|$ is exactly the number of nodes with level $r$. As every node except the root $x$ has at most $d-1$ children (and at most $d$ including $x$ itself), $|\Gamma_{r+1}(x)| \leq (d-1) |\Gamma_{r}(x)| \leq d(d-1)^{r}$ can be shown inductively. The second part of the Lemma can be shown by the calculation
    \begin{align*}
    |N_r(x)| = \sum_{i \leq r} |\Gamma_i(x)| & \leq 1 + d(1+(d-1)+\cdots+(d-1)^{r-1}) 
    \\ & = 1 + d \frac{(d-1)^r - 1}{d-2} < \frac{d}{d-2}(d-1)^r.
    \end{align*}
\end{proof}

In the next Lemma (from~\cite{tucci2013non}, we are interested in the following set of pairs of vertices, which is informally referred as a \emph{traffic} (and is also closely related to the \emph{betweenness centrality}), 
\[ T(w) := \{(x,y) \in V \times V: \text{ there exists a geodesic }[x,y] \text{ contains } w\} \quad \text{ for } w \in V.\]
\begin{lemm}\label{lemm:traffic-bounds}
    \cite{tucci2013non} Let $G = (V,E)$ be a graph with diameter $D$. For $w \in V$ and $i \geq 0$, we have
    \[ T(w) \leq \sum_{k+l \leq D} |\Gamma_k(w)| \cdot |\Gamma_l(w)| .\]
\end{lemm}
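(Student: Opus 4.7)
The plan is to establish an injection (or, more precisely, a containment) from $T(w)$ into the union $\bigcup_{k+l\le D} \Gamma_k(w) \times \Gamma_l(w)$ and then apply a union bound.

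First I would unpack the definition: if $(x,y) \in T(w)$, then some shortest path between $x$ and $y$ passes through $w$. The crucial consequence is that $w$ being on a geodesic $[x,y]$ forces the distances to add, namely $d(x,y) = d(x,w) + d(w,y)$. This is the standard concatenation property for geodesics in a metric space (and it holds verbatim for the shortest-path metric on a graph).

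Next I would set $k := d(x,w)$ and $l := d(w,y)$, so that by definition $x \in \Gamma_k(w)$ and $y \in \Gamma_l(w)$. The diameter bound $d(x,y) \le D$ combined with the identity from the previous step yields $k + l = d(x,y) \le D$. Thus every pair $(x,y) \in T(w)$ belongs to $\Gamma_k(w) \times \Gamma_l(w)$ for some pair $(k,l)$ of nonnegative integers with $k + l \le D$. Therefore
\[
    T(w) \subseteq \bigsqcup_{k+l \le D} \Gamma_k(w) \times \Gamma_l(w),
\]
and taking cardinalities (with a trivial union bound, since the sets on the right are disjoint because $\Gamma_k(w)$ and $\Gamma_{k'}(w)$ are disjoint for $k \ne k'$) gives exactly
\[
    |T(w)| \le \sum_{k+l \le D} |\Gamma_k(w)| \cdot |\Gamma_l(w)|,
\]
as claimed.

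There is no real obstacle here: the argument is essentially just bookkeeping once one notes the geodesic-concatenation identity $d(x,y) = d(x,w) + d(w,y)$ for any $w$ on a shortest path from $x$ to $y$. The only mild subtlety is making sure one is counting ordered pairs consistently (the definition of $T(w)$ uses $V \times V$, and the right-hand side is naturally a sum over ordered pairs $(k,l)$, so this matches up without any factor of two), and that the edge case $x = w$ or $y = w$ is handled by allowing $k = 0$ or $l = 0$, which is why the lemma is stated with the convention $\Gamma_0(w) = \{w\}$.
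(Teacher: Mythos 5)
Your proof is correct and follows essentially the same route as the paper's: observe that $w$ on a geodesic $[x,y]$ forces $d(x,w)+d(w,y)=d(x,y)\leq D$, and then count pairs by the values $k=d(x,w)$ and $l=d(w,y)$. The additional remarks about disjointness and the edge cases $k=0$ or $l=0$ are fine but not needed beyond what the paper already does.
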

\begin{proof}
    Given $(x,y) \in T(w)$, it is clear that $d(x,w)+d(w,y) = d(x,y) \leq D$. The number of $(x,y) \in T(w)$ conditioned on $d(x,w)=k$ and $d(y,w)=l$ is at most $|\Gamma_k(w)| \cdot |\Gamma_l(w)|$. Therefore, the desired inequality holds.
\end{proof}

\begin{lemm}\label{lemm:T_bounds}
    Suppose $G_n \in \RRG(n,d)$ with $d \geq 3$ has its diameter $D$. Then
    \[ \left| T(w) \right| < \frac{d^2}{(d - 2)^2} (d-1)^D\]
    holds for all $w$ in $V(G_n)$.
\end{lemm}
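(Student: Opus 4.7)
The plan is to chain the two preceding lemmas in a single computation. Starting from Lemma \ref{lemm:traffic-bounds}, I would first reorganize the double sum by fixing $k$, so that the inner sum over $l \leq D-k$ collapses to the ball volume $|N_{D-k}(w)|$:
\[
|T(w)| \;\leq\; \sum_{k+l \leq D}|\Gamma_k(w)|\cdot|\Gamma_l(w)| \;=\; \sum_{k=0}^{D}|\Gamma_k(w)|\cdot|N_{D-k}(w)|.
\]
This already separates the ``near $w$'' and ``far from $w$'' factors into a product whose factors the preceding lemma controls directly.

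Next, I would substitute the bounds from the preceding lemma: $|\Gamma_k(w)| \leq d(d-1)^{k-1}$, using the convention that this also covers $k = 0$ since $d/(d-1) \geq 1$ whenever $d \geq 2$, and $|N_{r}(w)| < \tfrac{d}{d-2}(d-1)^r$. Each term in the sum then becomes a product in which the $(d-1)^k$ and $(d-1)^{D-k}$ factors combine to yield $(d-1)^D$, with the remaining $d$-dependent constants factoring out. At this point, the bound reduces to a one-dimensional sum in $k$ of the form $(d-1)^D$ times a $k$-independent constant.

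The final step is to perform this summation in a way that yields the stated constant $\tfrac{d^2}{(d-2)^2}$. Here I would exploit the geometric tail identity $\sum_{k \geq 0}(d-1)^{-k} = \tfrac{d-1}{d-2}$: after factoring $(d-1)^D$ out of each term in $\sum_k |\Gamma_k(w)|\,|N_{D-k}(w)|$, the residual weights decay geometrically in $k$, so the series telescopes to a constant rather than producing a $(D+1)$ prefactor. The main obstacle is precisely this step: a careless bound that simply replaces each summand by its maximum value overcounts by a factor of $D$, and the cleanest way to avoid this is to keep the $(d-1)^{-k}$ decay coming from $|N_{D-k}(w)|$ visible as $k$ increases, rather than immediately absorbing it into a uniform per-term estimate. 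Once that geometric structure is retained, the final constant $\tfrac{d^2}{(d-2)^2}$ emerges as the product of the two geometric sum values $\tfrac{d-1}{d-2}$, scaled by the sphere/ball constants $d/(d-1)$ and $d/(d-2)$.
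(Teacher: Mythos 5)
Your reduction to $\sum_{k=0}^{D}|\Gamma_k(w)|\cdot|N_{D-k}(w)|$ is a correct reorganization of the bound in Lemma~\ref{lemm:traffic-bounds}, and you have correctly located where the difficulty lies: a termwise worst-case estimate produces an extra factor of $D$. The problem is that your proposed escape --- ``keep the $(d-1)^{-k}$ decay coming from $|N_{D-k}(w)|$ visible'' --- does not exist. After substituting the bounds from the preceding lemma, the $k$-th summand is
\[
d(d-1)^{k-1}\cdot\frac{d}{d-2}(d-1)^{D-k} \;=\; \frac{d^2}{(d-1)(d-2)}\,(d-1)^{D},
\]
which is \emph{independent of $k$}: the geometric growth of the sphere bound exactly cancels the geometric decay of the ball bound. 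There are no ``residual weights decaying geometrically in $k$'' and nothing telescopes; summing over $k=0,\dots,D$ gives $\frac{(D+1)\,d^2}{(d-1)(d-2)}(d-1)^{D}$, which exceeds the claimed $\frac{d^2}{(d-2)^2}(d-1)^{D}$ for every $D\geq 2$. This is not cosmetic: the anti-diagonal $k+l=D$ alone contributes $D+1$ products, each of which the sphere bound only controls by $d^2(d-1)^{D-2}$, so no rearrangement of the two termwise estimates $|\Gamma_k|\le d(d-1)^{k-1}$ and $|N_r|<\frac{d}{d-2}(d-1)^r$ can avoid the $D+1$ factor. Your closing account of where $\frac{d^2}{(d-2)^2}$ ``emerges from'' (a product of sphere/ball constants and geometric-sum values) is a numerological match to the target constant rather than a derivation.

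For comparison, the paper starts from the same double sum but groups by $t=k+l$ and then bounds the number of pairs $(k,l)$ with $k+l=t$ by $D+1-t$, a weight that \emph{decays} as $t\to D$; summing $(D+1-t)(d-1)^{t}$ is what yields a clean geometric series and the stated constant. That counting step is precisely where the factor of $D$ gets suppressed, and it deserves scrutiny in its own right, since the unconstrained number of such pairs is $t+1$, not $D+1-t$. The moral either way is that a complete proof must inject information beyond the two pointwise bounds you invoke --- for instance, that the $|\Gamma_k(w)|$ cannot all saturate their upper bounds simultaneously because $\sum_k|\Gamma_k(w)|=n$ --- and your proposal, as written, does not supply any such input.
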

\begin{proof}
    By lemma~\ref{lemm:traffic-bounds}, we see
    \begin{align*}
        |T(w)| & \leq \sum_{k+l \leq D} |\Gamma_k(w)| \cdot |\Gamma_l(w)| \\
        & \leq \sum_{k+l \leq D} d^2(d - 1)^{k+l-2} \\
        & = \frac{d^2}{(d - 1)^2} \sum_{0 \leq t \leq D} \sum_{k+l = t} (d - 1)^{t} \\
        & \leq \frac{d^2}{(d - 1)^2} \sum_{0 \leq t \leq D} (D+1-t) \cdot (d - 1)^{t} \\
        & = \frac{d^2}{(d - 1)^2} \left( \frac{(d-1)^{D+1} - 1}{d - 2} + \frac{(d-1)^{D} - 1}{d - 2} + \cdots + \frac{d - 2}{d - 2}\right) \\
        & < \frac{d^2}{(d - 1)^2} \left( \frac{(d-1)^{D+1} + (d-1)^D + \cdots + 1}{d - 2}\right) \\
        & = \frac{d^2}{(d - 1)^2} \cdot \frac{(d-1)^{D+2} - 1}{(d - 2)^2} < \frac{d^2}{(d - 2)^2} (d-1)^D.
    \end{align*}
    It turns out that the bound provided by \cite{tucci2013non} is quite loose and can be improved. This improvement will be used when we compute the second leading term in the bound on the sizes of the average geodesic triangle.
\end{proof}

In the next Lemmas, we use the known bound on the diameter for a random regular graph.
\begin{lemm}\label{lemm:RRG-diameter-bounds}
    \cite{bollobas1982diameter}~There exists a constant $C_d$, only depends on $d \geq 3$, such that $\diam(G_n) \leq \log_{d-1}n + \log_{d-1}\log_{d-1}n + C_d$ with high probability.
\end{lemm}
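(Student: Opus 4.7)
\textbf{Proof proposal for Lemma~\ref{lemm:RRG-diameter-bounds}.}

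The plan is to follow the configuration-model approach originally used by Bollobás and de la Vega. Work in the configuration model on $nd$ stubs (assuming $nd$ even): attach $d$ half-edges to each of the $n$ vertices and match the stubs via a uniform random perfect matching. Any event that holds with probability $\to 1$ in this model transfers to $\RRG(n,d)$ because the probability of obtaining a simple graph is bounded below by a positive constant depending only on $d$. Set $r := \tfrac{1}{2}(\log_{d-1}n + \log_{d-1}\log_{d-1}n + C_d)$, so that $(d-1)^{2r} = (d-1)^{C_d}\, n \log_{d-1} n$; the target is to show that with high probability every pair of vertices has distance at most $2r$.

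The first main task is to show that for every vertex $v$, the BFS ball satisfies $|\Gamma_r(v)| \geq \tfrac{1}{2}(d-1)^r$ with probability $1 - o(n^{-2})$. I would expose the BFS tree of $v$ level by level: at each step, an unpaired frontier stub is matched to a uniformly chosen other unpaired stub. Call a pairing a \emph{collision} when the partner belongs to an already-exposed vertex. After $s$ stubs have been exposed, the conditional collision probability of the next pairing is at most $s/(nd-2s)$. Since throughout this argument $s \leq 2(d-1)^r = O(\sqrt{n \log n})$, the expected total number of collisions during the growth phase is $O(\log n)$, and a Chernoff-type/Azuma bound on the associated super-martingale gives an $O(\log n)$ upper tail with probability $1 - o(n^{-2})$. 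In the absence of collisions each non-frontier vertex contributes exactly $d-1$ new stubs, so the induction $|\Gamma_{t+1}(v)| \geq (d-1)|\Gamma_t(v)| - (\text{collisions at level } t+1)$ yields $|\Gamma_r(v)| \geq (d-1)^r - O(\log n)\cdot (d-1)^r / (d-1)^{r}\cdot (d-1)^r$; after tidying this gives the desired $\tfrac{1}{2}(d-1)^r$ lower bound. A union bound over $v$ then extends this uniformly over all starting vertices.

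The second task is a birthday-paradox style intersection argument. Fix $x \neq y$, expose the BFS tree of $x$ up to radius $r$, and then expose the BFS of $y$. Conditional on the first tree, each newly matched stub from $y$'s tree lands in an unused stub chosen uniformly among the remaining unpaired stubs; in particular it hits a stub incident to $\Gamma_{\leq r}(x)$ with probability at least $c(d-1)^r/(nd)$ for some $c = c(d) > 0$, using the lower bound from the previous paragraph. Over the $\Omega((d-1)^r)$ pairings used while growing the $y$-tree up to radius $r$, the probability of no intersection is at most $\exp(-c'(d-1)^{2r}/n) = \exp(-c'(d-1)^{C_d}\log_{d-1}n)$. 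Choosing $C_d$ large enough so that this is $\leq n^{-3}$, a union bound over the $\binom{n}{2}$ pairs yields $\diam(G_n) \leq 2r$ with probability $1 - o(1)$.

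The hard step is the first one: controlling the accumulation of collisions during BFS exposure up to scale $\sqrt{n \log n}$, which is essentially the threshold at which a naive union bound on pairwise collisions of exposed stubs breaks down. Each collision fails to add a new vertex and can remove as many as $d-1$ descendants from the next generation, so an unlucky early collision could in principle halve the ball. Making the concentration argument robust requires tracking not just the number of collisions but also the generation at which each occurs, and bounding the resulting loss in frontier size by a supermartingale comparison with the deterministic recursion $f_{t+1} \geq (d-1)f_t - O(f_t^2/n)$. Once this is in place, the two-tree intersection step and the union bound are essentially routine and yield the bound $\log_{d-1}n + \log_{d-1}\log_{d-1}n + C_d$ on $\diam(G_n)$ with high probability.
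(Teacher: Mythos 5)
The paper does not prove this lemma: it is imported verbatim from Bollob\'as and Fern\'andez de la Vega~\cite{bollobas1982diameter} and used as a black box to bound $\diam(G_n)$ in the traffic argument. So there is no in-paper proof to compare against; what you have written is a reconstruction of the original configuration-model argument, and in outline it is the right one. The reduction to the configuration model via the positive probability of simplicity, the choice of $r$ with $(d-1)^{2r}=(d-1)^{C_d}\,n\log_{d-1}n$, the uniform lower bound on $|\Gamma_r(v)|$, and the birthday-paradox intersection of two exposed balls (noting that only the stubs on the boundary sphere $\Gamma_r(x)$ remain unpaired, still about $(d-1)|\Gamma_r(x)|$ of them) are exactly the ingredients of the known proof, and your second and third steps are routine once the first is in place.

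The gap is in the step you yourself flag as hard, and it is a real gap rather than a presentational one. The displayed estimate $|\Gamma_r(v)|\geq (d-1)^r-O(\log n)\cdot (d-1)^r/(d-1)^r\cdot(d-1)^r$ simplifies to $(d-1)^r-O(\log n)(d-1)^r$, which is negative and cannot ``tidy'' to $\tfrac12(d-1)^r$; merely bounding the \emph{number} of collisions by $O(\log n)$ is insufficient, because a single collision at level $t$ destroys a subtree contributing about $(d-1)^{r-t}$ vertices to level $r$, and $O(\log n)$ collisions at level $1$ would be fatal. The accounting that actually works is to weight collisions by depth: the expected number of collisions at level $t$ is $O((d-1)^{2t}/n)$, so the expected total loss is $\sum_{t\le r}O((d-1)^{2t}/n)\cdot(d-1)^{r-t}=O((d-1)^{2r}/n)=O(\log n)$, which is negligible against $(d-1)^r=\Theta(\sqrt{n\log n})$; one must then separately handle the $O(1)$ collisions that can occur within bounded distance of the root (each costs only a constant factor $1-O(1/d^2)$, and three or more such collisions at a fixed vertex happen with probability $O(n^{-3})$, so a union bound survives). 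Turning the $O(\log n)$ expected loss into a $1-o(n^{-2})$ tail per vertex is exactly the supermartingale argument you gesture at, but as written the proposal asserts the conclusion of that argument rather than carrying it out, so it is not yet a proof of the lemma --- though for the purposes of this paper, citing~\cite{bollobas1982diameter} as the authors do is the appropriate resolution.
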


The next Lemma is an improvement upon that in \cite{li2015traffic}. 
\begin{lemm}\label{lemm:colors-count}
    Let $G = (U,V,E)$ be a bipartite graph. The edges of $G$ are colored in such a way that so that given a node $x \in U \cup V$, the number of colors incident to $x$ is at most $t(x)$. ($u$ is incident to a color $c$ if $u$ is incident to an edge with color $c$). Then there exists a color that is used by at least
    \[ \frac{|E|^2}{\left( \sum_{u \in U} t(u) \right) \left( \sum_{v \in V} t(v) \right)} \]
    edges in $E$.
\end{lemm}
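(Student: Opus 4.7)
The plan is to prove the lower bound on the largest color class via a short two-step Cauchy--Schwarz / double-counting argument. For each color $c$, let $E_c$ denote the set of edges colored $c$, and let $U_c\subseteq U$, $V_c\subseteq V$ denote the vertices incident to color $c$. Since every $c$-colored edge lives in $U_c\times V_c$, we have the trivial bound $|E_c|\le|U_c|\cdot|V_c|$. The key (asymmetric) rewriting is to split one copy of $|E_c|$ and apply this bound only to it:
\[ |E_c| \;=\; \sqrt{|E_c|}\cdot\sqrt{|E_c|} \;\le\; \sqrt{|E_c|}\cdot\sqrt{|U_c|\,|V_c|}. \]

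Let $M:=\max_c|E_c|$, which is the quantity we want to lower-bound. Summing the previous inequality over $c$ and using $\sqrt{|E_c|}\le\sqrt{M}$, I would get
\[ |E| \;=\; \sum_c |E_c| \;\le\; \sqrt{M}\,\sum_c \sqrt{|U_c|}\cdot\sqrt{|V_c|}. \]
Then a second application of Cauchy--Schwarz separates the $U$- and $V$-factors:
\[ \sum_c \sqrt{|U_c|}\cdot\sqrt{|V_c|} \;\le\; \sqrt{\sum_c|U_c|}\cdot\sqrt{\sum_c|V_c|}. \]

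The final step is a swap of summation order. Reading $\sum_c|U_c|$ as counting (color, $U$-vertex)-incidence pairs, I get $\sum_c|U_c|=\sum_{u\in U}|\{c:u\text{ incident to }c\}|\le\sum_{u\in U}t(u)$, and analogously for the $V$-side. Substituting yields
\[ |E| \;\le\; \sqrt{M}\cdot\sqrt{\textstyle\sum_u t(u)}\cdot\sqrt{\textstyle\sum_v t(v)}, \]
which rearranges to $M \ge |E|^2 / \bigl(\sum_u t(u)\bigr)\bigl(\sum_v t(v)\bigr)$, as claimed. The only step that requires any insight is the asymmetric decomposition of $|E_c|$: one wants to introduce the factor $\sqrt{M}$ while leaving a residue $\sqrt{|U_c|\cdot|V_c|}$ that factors cleanly for the second Cauchy--Schwarz. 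After that, there is no substantive obstacle -- the remainder is pure bookkeeping.
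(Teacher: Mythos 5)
Your proof is correct, and it reaches exactly the paper's bound by a cleaner route. The overall skeleton is shared with the paper's argument --- both proofs ultimately rest on a Cauchy--Schwarz step that separates the $U$-side from the $V$-side, followed by the incidence double count $\sum_c|U_c|\le\sum_{u\in U}t(u)$ --- but the way you relate $|E_c|$ to the vertex sets differs substantively. The paper first proves a degree-weighted sub-lemma, $\sqrt{|E|}\le\sum_{(u,v)\in E}\bigl(\deg(u)\deg(v)\bigr)^{-1/2}$, applies it within each color class to get $\sqrt{|E_c|}\le\sum_{(u,v)\in E_c}\bigl(\deg_c(u)\deg_c(v)\bigr)^{-1/2}$, and then runs Cauchy--Schwarz on these weighted sums (using that $\sum_{(u,v)\in E_c}1/\deg_c(u)=|U_c|$), finishing with an averaging argument over edges. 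You replace that entire sub-lemma with the trivial counting bound $|E_c|\le|U_c|\,|V_c|$, which sits on the weak side of the paper's chain (indeed $\sqrt{|E_c|}\le\sum_{E_c}(\deg_c(u)\deg_c(v))^{-1/2}\le\sqrt{|U_c|\,|V_c|}$ by another Cauchy--Schwarz), and your asymmetric split $|E_c|\le\sqrt{M}\sqrt{|U_c|}\sqrt{|V_c|}$ plays the role of the paper's averaging step. The upshot is that your version is shorter and avoids any discussion of color-restricted degrees, while losing nothing: both arguments yield the identical constant-free bound $M\ge|E|^2/\bigl(\sum_u t(u)\bigr)\bigl(\sum_v t(v)\bigr)$, so the downstream applications to random regular and Erd\H{o}s--R\'enyi graphs are unaffected.
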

Note that the above lemma immediately implies Lemma 3.2 in \cite{li2015traffic} (by letting $|U| = |V| = n$ and $t(x) \leq t$ for all nodes $x$). The reason we developed a new lemma is as follows. Instead of the universal bound on $t$ (which is related to the behavior of the diameter of random graphs), we are able to use the \emph{average} bound on $t$, which may be used to improve the bound later.
\begin{proof}
    We provide the details of the proof in the Appendix.
\end{proof}

Now we are ready to prove the main theorem.
\begin{proof}
    First, we will prove the statement on the \emph{minsize}. We begin with the \emph{sampling procedure} below.
    \begin{itemize}
        \item For every pair $(v,w)$, \emph{fix} a geodesic segment between $v$ and $w$ (by randomly choosing one). For the rest of the argument, we use only these segments when we consider a geodesic triangle $\Delta(x,y,z)$. These choices constitute a path system.
        \item Randomly choose a pivot point $z \in V(G_n)$. Then evaluate $\minsize(\Delta(x,y,z))$ for any $x,y \in V(G_n)$.
    \end{itemize}
    It is easy to see that
    \[ 
        \mathbb{E} \sum_{x,y \in V} \minsize(\Delta(x,y,z)) = n^2 \mathbb{E} \eta_{{\rm minsize}}(G_n).
    \]
    Consider a threshold $\delta \geq 0$ and construct the following bipartite graph $H = (X,Y,E)$ with $X = Y = V(G_n)$ and 
    \[ 
        (x,y) \in E \Leftrightarrow \minsize(\Delta(x,y,z)) \leq \delta .
    \]
    We color each edge in $(x,y) \in E$ with $c(x,y) = p_z \in [x,y]$, where $\{p_x, p_y, p_z\}$ realizes the minsize of geodesic triangle. Then by assumption, $d(p_z, [x,z]), d(p_y, [y,z]) \leq \delta$. We see that
    \begin{itemize}
        \item In any $x \in X$, the number of colors incident to $x$ is at most $|N_{\delta}([x,z])| \leq (d(x,z) + 1) \cdot N(\delta)$, where $N(\delta)$ is the universal bound on the size of $\delta$-neighborhood. This is because for $p_z \in [x,y]$, there exists $p_x \in [x,z]$ such that $p_z \in N_{\delta}(p_x) \subset N_{\delta}([x,z])$. 
        \item Given any color $c$, the number of edges colored with $c$ is at most $|T(c)|$ where $T(c) = \{(x,y) \in V \times V: c \in [x,y]\}$, which is bounded by lemma.
    \end{itemize}
    By Lemma~\ref{lemm:colors-count}, we have
    \[ \frac{d^2}{(d - 2)^2} (d-1)^D \geq |T(c)| \geq \max_{c \in \mathcal{C}} |E_c| \geq \frac{|E|^2}{(\sum_{x \in V} |N_{\delta}(x,z)|)^2} \geq \frac{|E|^2}{N(\delta)^2 (n + \sum_{x \in V} d(x,z))^2} .\]
    We have $N(\delta) \leq \frac{d}{d-2}(d-1)^{\delta}$ and $\sum_{x \in V} d(x,z) \leq n \diam(G_n)$. This shows that given $p := |E| / n^2$,
    

    \[ p \leq \frac{1}{n}\left[ \frac{d^2}{(d-2)^2} (\diam(G_n)+1)(d-1)^{t + \frac{\diam(G_n)}{2}} \right] .\]
    We note that $p$ is the probability that our \emph{random} geodesic triangle has the slimness at most $t$. Therefore, if we take expectation, we have
    \[ \mathbb{P}(\minsize(\Delta(x,y,z)) \leq t) \leq \frac{1}{n}\left[  \frac{d^2}{(d-2)^2} (\diam(G_n)+1)(d-1)^{t + \frac{\diam(G_n)}{2}} \right] \]
    \[ \mathbb{P}(\minsize(\Delta(x,y,z)) > t) \geq 1 - \frac{1}{n}\left[  \frac{d^2}{(d-2)^2} (\diam(G_n)+1)(d-1)^{t + \frac{\diam(G_n)}{2}} \right] .\]
    Since $\diam(G_n) \leq \log_{d-1}n + \log_{d-1}\log_{d-1}n + C_d$ with high probability, this suggests that w.h.p.,
    \[ \mathbb{P}(\minsize(\Delta(x,y,z)) > t) \geq 1 - \frac{1}{n}\left[ \frac{d^2 (d-1)^{C_d/2}}{(d-2)^2} (\log_{d-1}n + \log_{d-1}\log_{d-1}n + (C_d + 1)) \cdot \sqrt{n \cdot \log_{d-1}n} \cdot (d-1)^t \right] .\]
    Denote $C_1 := C_d + 1$ and $C_2 := \frac{d^2 (d-1)^{C_d/2}}{(d-2)^2 \log(d-1)}$.
    Given $S > 0$ (which will be chosen later), we have
    \begin{align*}
        \mathbb{E} \zeta_{minsize}(G_n) & = \int_{0}^{\infty} \mathbb{P}(\minsize(\Delta(x,y,z)) > t) dt \geq \int_{0}^{S} \mathbb{P}(\minsize(\Delta(x,y,z)) > t) dt \\
        & = S - \int_0^S \frac{1}{n}\left[ \log(d-1) C_2 (\log_{d-1}n + \log_{d-1}\log_{d-1}n + C_1) \cdot \sqrt{n \cdot \log_{d-1}n} \cdot (d-1)^t \right] \\
        & = S - \log(d-1) C_2 \frac{(\log_{d-1}n + \log_{d-1}\log_{d-1}n + C_1) (\log_{d-1}n)^{1/2}}{n^{1/2}} \int_0^S (d-1)^t dt \\
        & = S - \log(d-1) C_2 \frac{(\log_{d-1}n + \log_{d-1}\log_{d-1}n + C_1) (\log_{d-1}n)^{1/2}}{n^{1/2}} \left[\frac{(d-1)^S - 1}{\log(d-1)} \right] \\
        & > S - C_2 \frac{(\log_{d-1}n + \log_{d-1}\log_{d-1}n + C_1) (\log_{d-1}n)^{1/2}}{n^{1/2}} \cdot (d-1)^S.
    \end{align*}
    Plugging in $S = \frac{1}{2} \log_{d-1}n - \frac{1}{2} \log_{d-1} \log_{d-1}n - \log_{d-1}( \log_{d-1} n + \log_{d-1} \log_{d-1} n + C_1 )$ yields
    \[\mathbb{E} \zeta_{minsize}(G_n) > S - C_2,\]
    as desired. The theorem can be shown by observing that $|S - \frac{1}{2} \log_{d-1}n + \frac{3}{2}  \log_{d-1} \log_{d-1}n| = o(1)$.

    For the slimness case, we need to choose a different value for $c$, given a \emph{slim} triangle $\Delta(x,y,z)$. In this case, we choose $c \in [x,y]$ based on the following simple proposition.
    \begin{lemm}
        If $\slim(\Delta(x,y,z)) = \delta$, then there exists a vertex $c \in [x,y]$ such that $d(c, [x,z]), d(c, [y,z]) \leq \delta + 1$.
    \end{lemm}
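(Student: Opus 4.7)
The plan is to use a discrete intermediate value argument on the vertices of the geodesic segment $[x,y]$, exploiting the slimness hypothesis which guarantees each vertex is close to one of the other two sides.

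First, I would classify each vertex $w$ on $[x,y]$ by the following coloring. Call $w$ \emph{red} if $d(w,[x,z]) \leq \delta$, and call $w$ \emph{blue} if $d(w,[y,z]) \leq \delta$. By the definition of $\delta$-slimness applied to the side $[x,y]$, every vertex on $[x,y]$ is red, blue, or both. Furthermore, $x$ is red (trivially, since $x \in [x,z]$) and $y$ is blue (since $y \in [y,z]$).

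Next, I traverse the vertices of $[x,y]$ in order from $x$ to $y$. If at some point a single vertex $c$ receives both colors, then $d(c,[x,z]) \leq \delta$ and $d(c,[y,z]) \leq \delta$, and we are done with a bound even better than $\delta+1$. Otherwise, there must exist two consecutive vertices $w_1, w_2$ along $[x,y]$ (at graph distance exactly $1$) such that $w_1$ is red-only and $w_2$ is blue-only (or the symmetric configuration). Taking $c := w_1$, we have $d(c,[x,z]) \leq \delta$ directly, while the triangle inequality gives
\[
d(c,[y,z]) \leq d(w_1,w_2) + d(w_2,[y,z]) \leq 1 + \delta,
\]
which is the desired bound. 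The symmetric case $c := w_2$ works identically.

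The argument is essentially combinatorial and there is no substantive obstacle; the only subtle point is the need for the ``$+1$'' slack, which arises precisely because consecutive vertices along the geodesic sit at graph distance one rather than being literal midpoints. This is exactly the kind of discretization loss that has appeared throughout the graph-case modifications earlier in the paper. I would conclude the proof by noting that this vertex $c \in [x,y]$ meets the requirement, completing the lemma.
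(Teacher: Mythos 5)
Your proof is correct and is essentially the same argument as the paper's: the paper picks the first vertex $c$ along $[x,y]$ with $d(c,[x,z]) > \delta$ (getting $d(c,[y,z]) \leq \delta$ from slimness and $d(c,[x,z]) \leq \delta+1$ from the predecessor), which is just the mirror image of your red/blue transition vertex. Both arguments hinge on the same two facts — slimness forces every vertex of $[x,y]$ to be within $\delta$ of one of the other two sides, and distances to a fixed set change by at most $1$ between adjacent vertices — so there is nothing further to add.
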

    \begin{proof}
        Pick $c \in [x,y]$ with $d(c, [x,z]) > \delta$, which is the \emph{closest} from $x$. If such $c$ does not exist, then the case is obvious. By the slimness, $d(c,[y,z]) \leq \delta$ holds. Also, for $c'$ the adjacent vertex of $c$ in $[x,y]$ toward $x$, $d(c,[x,z]) \leq d(c',[x,z]) + 1 \leq \delta + 1$ holds.
    \end{proof}
    Once we have done that, we can repeat the exactly same argument up to an additional constant 1.
\end{proof}

\begin{remark}
    We improved lemmas used in \cite{tucci2013non} in order to achieve the second leading term $-\frac{3}{2}  \log_{d-1} \log_{d-1}n$. If we had directly used the original lemmas, we would obtain the bound $- \frac{5}{2}  \log_{d-1} \log_{d-1}n$ instead.
\end{remark}

Intuitively, this suggests the following. It is easy to check that the slimness of any geodesic triangle is bounded by $\frac{1}{2} \diam(G)$, which is about $\frac{1}{2}\log_{d-1}n + \frac{1}{2}\log_{d-1}\log_{d-1}n$. On the other hand, we have seen that \emph{most} geodesic triangles in a random regular graph are fat so that the slimness is at least about $\frac{1}{2}\log_{d-1}n - \frac{3}{2}\log_{d-1}\log_{d-1}n$. Therefore the average behavior is somewhere between $[\frac{1}{2}\log_{d-1}n - \frac{3}{2}\log_{d-1}\log_{d-1}n , \frac{1}{2}\log_{d-1}n + \frac{1}{2}\log_{d-1}\log_{d-1}n]$. 


\subsection{Erd\H{o}s-R\'enyi graph $\ER(n,\lambda / n)$ with $\lambda > 1$}

Finally, in this subsection, we consider the Erd\H{o}s-R\'enyi random graph model $\ER(n,p)$. In order to ensure that the graphs we study have a large connected component, we consider the supercritical case in which the probability of an edge between any two of the $n$ vertices is $p = \lambda / n$ for some $\lambda > 1$. Following convention, we consider the giant component of these graphs, which has size roughly $\gamma(\lambda) \cdot n$. \cite{narayan2015lack} showed that this sparse random graph is \emph{not} hyperbolic by constructing a \emph{single} geodesic triangle which is forced to be fat. (In fact, their result suggests that such triangle is $\Omega(\log n)$-fat, which is not explicitly stated in the original paper.) We want something more in this average case!

Our main result is as follows.
\begin{thm}\label{thm:ERtriangleisfat}
    Given $G_n \in \ER(n,\lambda / n)$ with $\lambda \geq 4.67$, there exists $f(\lambda) > 0$ such that with high probability,
    \[ \mathbb{E}\left(\zeta_{slim}(\mathscr{C}_{\max})\right), \mathbb{E}\left(\eta_{{\rm minsize}}(\mathscr{C}_{\max})\right) \geq f(\lambda) \cdot \log n,\]
    where $\mathscr{C}_{\max}$ denotes the giant component of $G_n$.
\end{thm}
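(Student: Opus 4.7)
My plan is to adapt the counting argument used for $\RRG(n,d)$ almost verbatim to the giant component $\mathscr{C}_{\max}$ of $\ER(n,\lambda/n)$. Working inside $\mathscr{C}_{\max}$ (which has size $\gamma(\lambda)\,n$ and is connected w.h.p.), I first fix a random geodesic path system for every pair of vertices, sample a pivot $z$ uniformly from $\mathscr{C}_{\max}$, and for each threshold $\delta \geq 0$ form the bipartite graph $H = (X,Y,E)$ on $X = Y = V(\mathscr{C}_{\max})$ with $(x,y) \in E$ iff $\minsize(\Delta(x,y,z)) \leq \delta$. Each edge is colored by a vertex $c(x,y) \in [x,y]$ that lies within distance $\delta$ of both $[x,z]$ and $[y,z]$, exactly as in the RRG argument (with the one-extra-vertex shift for the slimness version). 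Then $p := |E|/|\mathscr{C}_{\max}|^2 = \mathbb{P}(\minsize(\Delta(x,y,z)) \leq \delta)$, and Lemma~\ref{lemm:colors-count} combined with traffic and neighborhood bounds yields an upper bound on $p$.

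The two ingredients that must be replaced from the RRG proof are (i) a high-probability uniform upper bound on ball sizes $|N_r(x)| \leq C(\lambda)\,\lambda^r$ for $x \in \mathscr{C}_{\max}$ and all $r \leq D$, and (ii) a high-probability upper bound on the diameter $D = \diam(\mathscr{C}_{\max}) \leq \log_\lambda n + O(\log\log n)$. For (i), I would run a BFS from each $x$ and couple the exploration to a Poisson($\lambda$) Galton--Watson branching process (valid while the explored set is $o(n)$); a Chernoff bound for the sum of Poisson offspring on each BFS level plus a union bound over the $n$ starting vertices produces the uniform estimate. For (ii), I would appeal to the known diameter asymptotics for the giant component of $\ER(n,\lambda/n)$. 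With these in hand, the argument of Lemma~\ref{lemm:traffic-bounds} gives $|T(c)| \leq C'(\lambda)\,\lambda^D$, exactly as in the regular case.

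Plugging the estimates $t(x) \leq (d(x,z)+1)\,N(\delta) \leq (D+1)\,C(\lambda)\lambda^\delta$ and $\sum_x t(x) \leq |\mathscr{C}_{\max}|\,(D+1)\,C(\lambda)\lambda^\delta$ into Lemma~\ref{lemm:colors-count} gives
\[
    p \;\leq\; \frac{\sqrt{|T(c)|}\,(D+1)\,C(\lambda)\lambda^\delta}{|\mathscr{C}_{\max}|} \;\leq\; \frac{K(\lambda)\,(\log n)\,\lambda^{D/2}\,\lambda^\delta}{n}.
\]
Since $\lambda^{D/2} \leq \sqrt{n}\cdot\mathrm{polylog}(n)$ w.h.p., integrating $\mathbb{P}(\minsize > t)$ from $0$ to $S := \tfrac{1}{2}\log_\lambda n - O(\log\log n)$ produces the lower bound $\mathbb{E}\eta_{{\rm minsize}}(\mathscr{C}_{\max}) \geq f(\lambda)\log n$ with $f(\lambda) = \tfrac{1}{2\log\lambda} - o(1)$. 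The slimness version then follows by the same shift-by-one lemma used at the end of the proof of Theorem~\ref{thm:fat_triangle}, costing only an additive constant.

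The main obstacle is controlling neighborhood growth \emph{uniformly} over $\mathscr{C}_{\max}$. Unlike $\RRG(n,d)$, degrees in $\ER(n,\lambda/n)$ are Poisson and can be as large as $\Theta(\log n/\log\log n)$, so a single high-degree vertex on a BFS frontier can blow up $|N_r(x)|$ far above $\lambda^r$. The fix is to absorb these rare spikes into $C(\lambda)$ via a concentration bound on the sum of degrees at each BFS level, which only works cleanly once $\lambda$ is large enough that the $\lambda^{D/2}$ main term dominates the polylog slack coming from the diameter and neighborhood estimates. The explicit threshold $\lambda \geq 4.67$ is precisely what is needed for $S > 0$ to survive after all error terms are subtracted, producing a positive $f(\lambda)$; for smaller supercritical $\lambda$ the $O(\log\log n)$ diameter correction and the branching-process slack begin to erode the leading $\tfrac{1}{2}\log_\lambda n$ term and a more delicate argument would be required.
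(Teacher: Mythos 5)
Your overall strategy matches the paper's: re-run the RRG traffic/coloring argument on $\mathscr{C}_{\max}$, substituting a new neighborhood-growth bound and a diameter bound, then integrate the tail of $\mathbb{P}(\minsize > t)$ up to a threshold $S$. The neighborhood bound you sketch (BFS coupled to a Poisson branching process, Chernoff plus union bound) is essentially the paper's Lemma~\ref{lemm:neighbor-bounds-on-ER}, and the polylog slack you worry about is indeed harmless. The gap is in your ingredient (ii): the claim $\diam(\mathscr{C}_{\max}) \leq \log_\lambda n + O(\log\log n)$ is false for sparse Erd\H{o}s--R\'enyi graphs. The \emph{typical} distance is $\sim \log_\lambda n$, but the diameter of the giant component is asymptotically $\left(\frac{1}{\log\lambda} + \frac{2}{\log(1/\lambda_*)}\right)\log n + O(1)$ (Riordan--Wormald), where $\lambda_*<1$ is the conjugate parameter; the extra $\frac{2}{\log(1/\lambda_*)}\log n$ term, contributed by long tendrils hanging off the 2-core, is $\Theta(\log n)$, not $O(\log\log n)$.

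This is not a bookkeeping issue: it changes both the constant and the range of validity. Your bound $p \lesssim \lambda^{\delta + D/2}/n$ requires $\delta < \log_\lambda n - D/2$, so with the correct diameter the usable threshold is $S \approx \left(\frac{1}{2} - \frac{\log\lambda}{\log(1/\lambda_*)}\right)\log_\lambda n$ rather than $\frac{1}{2}\log_\lambda n - O(\log\log n)$. The hypothesis $\lambda \geq 4.67$ is exactly the condition $\frac{2}{\log(1/\lambda_*)} < \frac{1}{\log\lambda}$ that makes this coefficient positive --- it is forced by the leading term of the diameter, not (as you suggest) by polylog error terms eroding the main term. Consequently your claimed $f(\lambda) = \frac{1}{2\log\lambda} - o(1)$ is too strong; the paper only obtains $f(\lambda)$ on the order of $\frac{\epsilon}{2\log\lambda}$ for $\epsilon < 1 - \frac{2\log\lambda}{\log(1/\lambda_*)}$. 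If your diameter claim were true, the theorem would hold for every $\lambda > 1$, contradicting the role the hypothesis plays. To repair the proof, replace your diameter estimate with the Riordan--Wormald bound and redo the choice of $S$ accordingly.
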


\begin{coro}
    With high probability, a geodesic triangle in (the giant component $\mathscr{C}_{\max}$ of) $G_n \in \ER(n, \lambda / n)$ is $\Omega(\log n)$-fat.
\end{coro}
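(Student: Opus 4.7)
The plan is to adapt the traffic-based counting argument from Theorem~\ref{thm:fat_triangle} to the Erd\H{o}s-R\'enyi giant component. The guiding intuition is unchanged: if too many geodesic triangles in $\mathscr{C}_{\max}$ had small slimness or minsize, then some vertex would be forced to lie on too many geodesics, which is incompatible with at-most-exponential neighborhood growth and the logarithmic diameter of the giant component.

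First, I would assemble the ER analogues of the three structural ingredients that drove the RRG proof. The first is a high-probability diameter bound of the form $\diam(\mathscr{C}_{\max}) \leq \frac{\log n}{\log \lambda} + O(\log \log n)$ (Fernholz--Ramachandran, Chung--Lu). The second is a high-probability \emph{uniform} neighborhood bound: there exists a constant $C_\lambda$ such that $|N_r(x)| \leq C_\lambda \lambda^r$ for every $x \in \mathscr{C}_{\max}$ and every integer $r \geq 0$, which can be obtained by coupling the BFS exploration from $x$ with a Poisson$(\lambda)$ Galton--Watson tree and union-bounding over the $n$ vertices and over $r \leq \diam$. The third is a traffic bound $|T(w)| \leq C'_\lambda \lambda^{\diam(\mathscr{C}_{\max})}$ for every $w \in \mathscr{C}_{\max}$, derived from the previous two exactly as in Lemma~\ref{lemm:T_bounds}.

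With these in hand, I would run the same coloring scheme used in the proof of Theorem~\ref{thm:fat_triangle}: fix a path system on $\mathscr{C}_{\max}$, sample a pivot $z$ uniformly from $\mathscr{C}_{\max}$, and, for a threshold $\delta$, form the bipartite graph $H$ on $V(\mathscr{C}_{\max}) \times V(\mathscr{C}_{\max})$ whose edges are pairs $(x,y)$ with $\minsize(\Delta(x,y,z)) \leq \delta$, coloring $(x,y)$ by the minsize-realizing vertex on $[x,y]$. The number of colors incident to $x$ is at most $(d(x,z)+1)\cdot C_\lambda \lambda^\delta$, and the number of edges of any fixed color is at most $|T(c)|$. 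Applying Lemma~\ref{lemm:colors-count} and using $|\mathscr{C}_{\max}| = \gamma(\lambda) n (1 + o(1))$ yields a bound of the form
$$ \mathbb{P}\bigl(\minsize(\Delta(x,y,z)) \leq \delta\bigr) \;\leq\; \frac{C_\lambda'' (\diam+1)\, \lambda^{\delta + \diam/2}}{n}, $$
and integrating this tail against a cutoff $S = \alpha \log_\lambda n$ with $\alpha$ slightly smaller than $\tfrac{1}{2}$ produces the desired lower bound $f(\lambda) \log n$. The slimness case follows from the minsize case via the same $+1$ adjustment (the auxiliary lemma at the end of the RRG proof) that relocates a minsize-realizing vertex to a nearby slimness-witnessing vertex.

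The main obstacle lies in the first step. Unlike $\RRG(n,d)$, the ER model has no deterministic degree bound, so the uniform inequality $|N_r(x)| \leq C_\lambda \lambda^r$ holds only with high probability; the union bound that delivers it introduces error terms that compete with the leading $\tfrac{1}{2 \log \lambda}$ coefficient in the cutoff $S$, and it is this competition that pins down the numerical threshold $\lambda \geq 4.67$ as the smallest $\lambda$ at which the resulting coefficient $f(\lambda)$ is provably positive. A secondary, more routine, issue is that the entire argument must be restricted to the giant component: the counting identity $\mathbb{E} \sum_{x,y \in \mathscr{C}_{\max}} \minsize(\Delta(x,y,z)) = |\mathscr{C}_{\max}|^2 \cdot \mathbb{E}\eta_{minsize}(\mathscr{C}_{\max})$ and each of the three structural bounds above must be stated and proved for $\mathscr{C}_{\max}$ rather than for $G_n$, which is handled by standard supercritical ER structure results together with the concentration $|\mathscr{C}_{\max}| = \gamma(\lambda) n (1+o(1))$.
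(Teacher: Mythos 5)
Your overall strategy is the same as the paper's: the corollary is extracted from the tail bound on $\mathbb{P}(\minsize(\Delta(x,y,z)) \leq t)$ established inside the proof of Theorem~\ref{thm:ERtriangleisfat}, and that proof is indeed the RRG traffic/coloring argument transplanted to $\mathscr{C}_{\max}$ with (i) a neighborhood-growth bound, (ii) a diameter bound, and (iii) the resulting bound on $|T(w)|$, followed by Lemma~\ref{lemm:colors-count} and integration of the tail. However, two of your structural ingredients are stated incorrectly, and one of the errors hides the actual difficulty. First, the diameter of the supercritical giant component is \emph{not} $\frac{\log n}{\log\lambda}+O(\log\log n)$; that is the typical distance between two random vertices. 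The diameter, per Riordan--Wormald, is $\bigl(\frac{1}{\log\lambda}+\frac{2}{\log(1/\lambda_*)}\bigr)\log n+O(1)$, where $\lambda_*<1$ is the conjugate parameter with $\lambda_* e^{-\lambda_*}=\lambda e^{-\lambda}$; the extra term comes from long tendrils hanging off the $2$-core. This matters because the traffic bound $|T(w)|\lesssim \lambda^{\diam}$ is only useful when $\diam < 2\log_\lambda n$, i.e.\ when $\frac{2}{\log(1/\lambda_*)}<\frac{1}{\log\lambda}$, and \emph{that} inequality is precisely what forces $\lambda\geq 4.67$. Your attribution of the $4.67$ threshold to union-bound losses in the neighborhood estimate is wrong; with your (false) diameter bound the argument would go through for all $\lambda>1$ with cutoff near $\tfrac12\log_\lambda n$, which is not what the theorem claims. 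The correct cutoff is $S_n\sim\frac{\epsilon}{2}\log_\lambda n$ with $\epsilon<1-\frac{2\log\lambda}{\log(1/\lambda_*)}$, so the constant $f(\lambda)$ degrades as $\lambda\downarrow 4.67$ for a reason your write-up does not capture.

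Second, the uniform bound $|N_r(x)|\leq C_\lambda\lambda^r$ with a constant $C_\lambda$ is false with high probability already at $r=1$: the maximum degree of $\ER(n,\lambda/n)$ grows like $\log n/\log\log n$, so no constant works uniformly over all vertices. The paper's Lemma~\ref{lemm:neighbor-bounds-on-ER} instead proves $|\Gamma_r(x)|\leq (r+1)^2\lambda^r\log n$, i.e.\ a polynomial-in-$r$ times $\log n$ correction, via a Chernoff bound on the BFS exploration and a union bound over vertices and radii. This is repairable (the polylogarithmic factors are absorbed into the $n^{-\epsilon/2}\log^{5.5}n$ tail and do not affect the leading order of $S_n$), but as stated your step (ii) fails. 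The remaining components of your plan --- the counting identity restricted to $\mathscr{C}_{\max}$, the coloring by the minsize-realizing vertex on $[x,y]$, the use of $|\mathscr{C}_{\max}|=\gamma(\lambda)n(1+o(1))$, and the $+1$ relocation lemma for the slimness case --- all match the paper.
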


The corollary answers the question raised in \cite{narayan2015lack} and shows that the $\Omega(\log n)$-fat triangle in the Erd\H{o}s-R\'enyi graph is \emph{typical}.

\begin{proof}
    The proof procedure is as follows. We use a similar method to that for the random regular graph case. While those calculations were detailed, here, we omit the details and argue simply that the leading term is $O(\log n)$.
    \begin{enumerate}
        \item First, given a point $x$, we want to bound the number of geodesics which contains the point $x$. Following the arguments for Lemmas~\ref{lemm:traffic-bounds} and \ref{lemm:T_bounds}, what we need to do is to bound the size of neighborhood $|\Gamma_r(x)|$ and $|N_r(x)|$ given $r > 0$. However, as the degree is not bounded, so that the upper bound is not immediate. it is not clear. Hence we adapted the lemma stated from \cite{chung2001diameter}.
        \item We also need to bound the diameter of our giant component. We use explicite results from \cite{riordan2010diameter}. The condition $\lambda \geq 4.67$ is required so that the diameter $\diam(\mathscr{C}_{max})$ is not bigger than $2 \log n / \log \lambda + O(1)$.
    \end{enumerate}

\begin{lemm}[Bounds on the size of neighborhood]\label{lemm:neighbor-bounds-on-ER}
    Given $G_n \in \ER(n,\lambda / n)$ with $\lambda > 1$,
    \[ |\Gamma_r(x)| \leq (r+1)^2 \lambda^r \cdot \log n\]
    holds for all $x \in V(G_n)$ and $0 \leq r \leq n$, with high probability.
\end{lemm}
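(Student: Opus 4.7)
The plan is to run a breadth-first search from $x$, couple its layers with a branching process, and combine a one-step Chernoff estimate with an induction on $r$ and a final union bound. Let $\mathcal{F}_{r-1}$ denote the $\sigma$-algebra generated by the edges exposed in the first $r-1$ BFS rounds from $x$, so that $\Gamma_0(x),\dots,\Gamma_{r-1}(x)$ are $\mathcal{F}_{r-1}$-measurable while the edges between $V\setminus N_{r-1}(x)$ and $\Gamma_{r-1}(x)$ are still independently present with probability $\lambda/n$. Conditional on $\mathcal{F}_{r-1}$, each $v \notin N_{r-1}(x)$ lies in $\Gamma_r(x)$ independently with probability at most $1-(1-\lambda/n)^{|\Gamma_{r-1}(x)|} \leq \lambda|\Gamma_{r-1}(x)|/n$, so $|\Gamma_r(x)|$ is stochastically dominated by $\mathrm{Bin}(n,\lambda|\Gamma_{r-1}(x)|/n)$ with conditional mean $\mu_r \leq \lambda \cdot |\Gamma_{r-1}(x)|$.

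I then induct on $r$. The base case $r=0$ is immediate since $|\Gamma_0(x)|=1 \leq \log n$, and $r=1$ follows from a Chernoff tail bound on $\deg(x) \sim \mathrm{Bin}(n-1,\lambda/n)$ (whose mean $\lambda$ is dwarfed by the target $4\lambda \log n$). For the inductive step, assume $|\Gamma_{r-1}(x)| \leq r^2 \lambda^{r-1} \log n$ so that $\mu_r \leq r^2 \lambda^r \log n$; the desired bound $(r+1)^2 \lambda^r \log n$ exceeds $\mu_r$ by a multiplicative factor $1+\delta$ with $\delta \geq 2/r$. A standard multiplicative Chernoff estimate then yields
\[
  \mathbb{P}\bigl(|\Gamma_r(x)| > (r+1)^2 \lambda^r \log n \,\big|\, \mathcal{F}_{r-1}\bigr) \leq \exp\!\bigl(-\tfrac{1}{3}\delta^2 \mu_r\bigr) \leq \exp\!\bigl(-\tfrac{4}{3}\lambda^r \log n\bigr) \leq n^{-4/3},
\]
using $\lambda \geq 1$ in the final step (with the $r=2$ case, where $\delta > 1$, handled by the linear-regime Chernoff tail instead).

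The proof concludes by a union bound. Since $|\Gamma_r(x)|=0$ once $r$ exceeds the diameter of the component containing $x$, only $r = O(\log n)$ values are relevant (using the $O(\log n)$ diameter bound on $\ER(n,\lambda/n)$ invoked in the next lemma); for larger $r$ the claimed inequality is vacuous. Summing the per-pair failure probability over the $n$ choices of $x$ and $O(\log n)$ relevant values of $r$ gives a total failure probability of $O(n \log n \cdot n^{-4/3}) = o(1)$.

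The main obstacle, I expect, is the bookkeeping of the conditioning during the induction: the estimate $\mu_r \leq r^2 \lambda^r \log n$ is valid only on the event $E_{r-1}(x) := \{|\Gamma_s(x)| \leq (s+1)^2 \lambda^s \log n \text{ for all } s < r\}$, so the Chernoff bound has to be applied conditionally on that event and combined with $\mathbb{P}(E_{r-1}(x)^c)$ via a telescoping argument across layers. Once that is set up cleanly, and once one verifies that the BFS edge exposure really leaves the edges from $V\setminus N_{r-1}(x)$ to $\Gamma_{r-1}(x)$ conditionally independent given $\mathcal{F}_{r-1}$, the rest is routine.
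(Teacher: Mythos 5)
Your proposal is correct and follows essentially the same route as the paper's proof: a layer-by-layer induction that couples $|\Gamma_r(x)|$ to a binomial via BFS edge exposure, a Chernoff tail at each layer applied on the event that all earlier layers obey the bound, and a final union bound over $x$ and $r$. The only divergence is at the union bound: the paper keeps the factor $\lambda^{r+1}\geq\lambda$ in the Chernoff exponent to get a per-layer failure probability of $n^{-2\lambda}$, which survives a union over all $n^2$ pairs $(x,r)$ precisely because $\lambda>1$, whereas your $n^{-4/3}$ forces the restriction to $r=O(\log n)$ --- which is fine, though the cited diameter bound covers only the giant component (small components need the separate standard fact that they have size $O(\log n)$), and the cleaner observation is that the claimed inequality is deterministically true once $(r+1)^2\lambda^r\log n\geq n$, i.e.\ for all $r\gtrsim\log_\lambda n$, so no structural input is needed at all.
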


\begin{proof}
    We provide the details of the proof in the Appendix for completeness. 
\end{proof}

\begin{lemm}[Bounds on the diameter, \cite{riordan2010diameter}]
    Given $\lambda > 1$, let $\lambda_* < 1$ satisfy $\lambda_* e^{-\lambda_*} = \lambda e^{-\lambda}$ (i.e., the conjugate). Then there exists a constant $C_\lambda$ that depends only on $\lambda$, such that for $G_n \in \ER(n, \lambda/n)$ and its giant component $\mathscr{C}_{max}$,
    \[ \diam(\mathscr{C}_{max}) \leq  \left(\frac{1}{\log \lambda} + \frac{2}{\log (1/\lambda_*)} \right) \log n + C_\lambda \]
    with high probability. Note that for $\lambda \geq 4.67, \frac{2}{\log (1/\lambda_*)} < \frac{1}{\log \lambda}$ holds.
\end{lemm}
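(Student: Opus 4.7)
The plan is to couple a BFS exploration from a fixed vertex with a Galton--Watson branching process and then to decompose the giant component $\mathscr{C}_{\max}$ into its 2-core and the pendant trees hanging off of it. These two pieces contribute the two different rates appearing in the bound: the 2-core, being locally tree-like with expansion rate $\lambda$, gives the $1/\log \lambda$ term, while the pendant trees, distributed as subcritical trees of mean $\lambda_*$, give the $2/\log(1/\lambda_*)$ term (the factor of $2$ reflects that a diameter-realizing path can begin and end deep inside pendants).

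Concretely, the exploration from a fixed vertex $v$ is coupled, for the first $o(\log n)$ generations, with a $\mathrm{Poisson}(\lambda)$ Galton--Watson tree. Conditional on survival (which has probability $\gamma(\lambda) = 1 - \lambda_*/\lambda$ by the duality $\lambda_* e^{-\lambda_*} = \lambda e^{-\lambda}$), generation $k$ has size of order $\lambda^k$; conditional on extinction, the exploration is distributed as a subcritical Galton--Watson tree of mean $\lambda_*$. Using either the configuration-model representation of the 2-core or a direct second-moment count of self-avoiding paths, one shows that the 2-core has diameter at most $\log n/\log \lambda + O(1)$ with high probability, yielding the first summand. Separately, after deleting the 2-core, the remaining forest consists of pendant trees each attached to the 2-core by a single edge; each such tree looks asymptotically like an independent subcritical GW tree of mean $\lambda_*$, so a union bound over the $O(n)$ pendant trees forces the deepest one to have depth at most $\log n/\log(1/\lambda_*) + O(1)$ w.h.p.

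Combining these two pieces, any geodesic in $\mathscr{C}_{\max}$ splits as (pendant path from one endpoint) $+$ (2-core path) $+$ (pendant path to the other endpoint), giving
\[ \diam(\mathscr{C}_{\max}) \leq \frac{\log n}{\log \lambda} + \frac{2 \log n}{\log(1/\lambda_*)} + C_\lambda. \]
The main obstacle is the 2-core analysis: the 2-core is not itself an Erd\H{o}s--R\'enyi graph, and its degree-sequence dependencies must be handled via switching arguments or sharp path-counting moment computations in the configuration model. The threshold $\lambda \geq 4.67$ in the final clause is precisely the regime where $1/\log \lambda > 2/\log(1/\lambda_*)$, i.e., the 2-core contribution dominates the pendant contribution, which is what makes the bound usable downstream in Theorem~\ref{thm:ERtriangleisfat}.
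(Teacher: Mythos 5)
The paper does not actually prove this lemma; it is imported verbatim as a black-box citation to Riordan and Wormald \cite{riordan2010diameter}, so there is no in-paper argument to compare yours against. Your sketch does correctly reconstruct the broad architecture of the cited proof: a BFS/Galton--Watson coupling, the duality $\lambda_* e^{-\lambda_*} = \lambda e^{-\lambda}$ with survival probability $\gamma = 1 - \lambda_*/\lambda$, a ``core plus protrusions'' decomposition in which the core contributes the $\log n/\log\lambda$ rate and the subcritical dual process contributes the $2\log n/\log(1/\lambda_*)$ rate, and the correct reading of the $\lambda \geq 4.67$ clause as the crossover point where $2/\log(1/\lambda_*) < 1/\log\lambda$ (which is exactly what Theorem~\ref{thm:ERtriangleisfat} needs).

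However, as a proof it has a genuine gap, and one step is wrong as stated. The wrong step: the 2-core of $\ER(n,\lambda/n)$ does \emph{not} have diameter $\log n/\log\lambda + O(1)$. The 2-core contains long chains of degree-2 vertices (subdivided kernel edges) whose maximal length is itself of order $\log n/\log(1/\lambda_*)$, so the 2-core's diameter already exceeds $\log n/\log\lambda$ by a term of the same order as your pendant-tree contribution. The correct decomposition in \cite{riordan2010diameter} is kernel $+$ subdivided edges $+$ attached trees, and the $2\log n/\log(1/\lambda_*)$ term bounds the two deepest excursions away from the \emph{kernel}, whether those excursions run into pendant trees or out along half of a long subdivided edge; attributing that term solely to pendant trees and assigning the clean $1/\log\lambda$ rate to the 2-core double-counts nothing only by accident of the final formula, and the claimed intermediate statement is false. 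The gap: a second-moment count of self-avoiding paths (or a typical-distance computation) controls $d(u,v)$ for a \emph{fixed} pair, i.e.\ typical distances, not the w.h.p.\ maximum over all $\binom{n}{2}$ pairs; upgrading to a uniform upper bound is the hard part of the diameter problem and requires an expansion argument showing that \emph{every} kernel vertex reaches $n^{1/2+o(1)}$ vertices within $\tfrac{1}{2}\log n/\log\lambda + O(1)$ steps, plus a sprinkling or conditioning argument to join two such balls. Since the paper only cites the result, the honest conclusion is that your proposal is a plausible roadmap to the reference's proof rather than a proof, and the 2-core claim would need to be repaired before the roadmap is even correct in outline.
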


We are now ready to prove Theorem~\ref{thm:ERtriangleisfat}. Given $\lambda \geq 4.67$, pick $0 < \epsilon < 1 - \frac{2 \log \lambda}{\log(1/\lambda_*)}$. With high probability, $\diam(\mathscr{C}_{max}) \leq (2-\epsilon) \frac{\log n}{\log \lambda} + C_\lambda (=:D_\lambda)$ so that by Lemmas~\ref{lemm:traffic-bounds} and \ref{lemm:neighbor-bounds-on-ER},
\begin{align*}
    |T(w)| & \leq \sum_{k+l \leq D_\lambda} |\Gamma_k(w)| \cdot |\Gamma_l(w)| \\
    & \leq  \sum_{k+l \leq D_\lambda} (k+1)^2(l+1)^2 \lambda^{k+l} (\log n)^2 \\
    & \leq (\log n)^2 \sum_{t \leq D_\lambda} \sum_{k+l = t} (k+1)^2 (l+1)^2 \lambda^{t} \\
    & = \frac{(\log n)^2}{30} \sum_{t \leq D_\lambda} (t-1)(t+1)(t+2)(t+3)(t+5)\lambda^t \\
    & \leq \frac{(\log n)^2 (D_\lambda+5)^5}{30}  \sum_{t \leq D_\lambda} \lambda^t < \frac{(\log n)^2 (D_\lambda+5)^5 \lambda^{D_\lambda+1}}{\lambda - 1},
\end{align*}
holds for all $w \in \mathscr{C}_{max}$. We see that, after substituting in the expression for $D_
lambda$ and simplifying, the right hand side becomes $O(n^{2-\epsilon} \log^7 n)$. In other words, there exists $A_\lambda > 0$ such that
\[ |T(w)| \leq A_\lambda \cdot n^{2-\epsilon} \log^7 n,\]
for $n$ sufficiently large.

Next, we use our previous proof technique again. This gives us that for $p:= \mathbb{P}(\minsize(\Delta(x,y,z)) \leq t)$, for a giant component $\mathscr{C}_{max}$ with size $\gamma n$, there exists a color $c$ which is used at least
\[ \frac{(p(\gamma n)^2)^2}{N(t)^2 (\gamma n + \sum_{x \in \mathscr{C}_{max}} d(x,z))^2} \leq |T(c)| \leq A_\lambda \cdot n^{2-\epsilon} \log^7 n \]
times.

We simply use $d(x,z) \leq \diam(\mathscr{C}_{max}) \leq 2 \frac{\log n}{\log \lambda} - 1$ and $N(t) \leq \sum_{r \leq t} \max |\Gamma_r(x)| \leq \sum_{r \leq t} (r+1)^2 \lambda^r \log n \leq (t+1)^3 \lambda^t \log n$, so that

\[ p = \mathbb{P}(\minsize(\Delta(x,y,z)) \leq t) \leq \frac{2\sqrt{A_\lambda}}{\gamma \log \lambda} n^{-\epsilon/2} (\log^{5.5}n) \cdot (t+1)^3 \lambda^t, \]
\[ 1 - p = \mathbb{P}(\minsize(\Delta(x,y,z)) > t) \geq 1 - \frac{2\sqrt{A_\lambda}}{\gamma \log \lambda} n^{-\epsilon/2} (\log^{5.5}n) \cdot (t+1)^3 \lambda^t. \]
Now pick $S_n > 0$ such that $\int_0^{S_n} (t+1)^3 \lambda^t dt= \left( \left[ \frac{\lambda^t((t+1)^3 \log^3 \lambda - 3(t+1)^2 \log^2 \lambda + 6(t+1) \log \lambda - 6)}{\log^4 \lambda} \right]^{t=S_n}_{t=0} = \right) \frac{n^{\epsilon/2}}{\log^{5.5}n}$. It turns out that
\[ \mathbb{E} \zeta_{minsize}(\mathscr{C}_{max}) = \int_{0}^{\infty} \mathbb{P}(\minsize(\Delta(x,y,z)) > t) dt \geq \int_{0}^{S_n} \mathbb{P}(\minsize(\Delta(x,y,z)) > t) dt \geq S_n - \frac{2 \sqrt{A_\lambda}}{\gamma \log \lambda}.\]
It is easy to check that $S_n \sim (\epsilon/2) \cdot \frac{\log n}{\log \lambda}$. Also, with high probability, $\gamma$ converges to the final probability $\gamma(\lambda)$, which only depends on $\lambda$. Therefore, the right hand side of the inequality is bounded by $\Omega(\log n)$ with high probability, thus completing the proof.

The slimness case can be shown analogously.
\end{proof}

As we have seen in random regular graph case, we again expect a gap between the (four point) hyperbolicity and the slimness of the geodesic triangles. Although, we do not have a proof explicitly bounding the four point condition, we instead include a simple proof that the gap \emph{exists}.

\begin{thm}
Given $G_n \in \ER(n, \lambda/n)$ with $\lambda > 1$ and its giant component $\mathscr{C}_{\max}$, we have
    \[ \mathbb{E}\left(\delta_{fp}(\mathscr{C}_{max})\right) = o(\log n)\]
with high probability.
\end{thm}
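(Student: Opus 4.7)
I would mirror the variance-based argument used earlier for $\RRG(n,d)$. Sample $x_1,x_2,x_3,x_4$ i.i.d.\ uniformly from $\mathscr{C}_{\max}$, and put $P=d(x_1,x_2)+d(x_3,x_4)$, $Q=d(x_1,x_3)+d(x_2,x_4)$, $R=d(x_1,x_4)+d(x_2,x_3)$. Since
\[
\fp(x_1,x_2,x_3,x_4) = \tfrac{1}{2}\bigl(\max(P,Q,R) - \operatorname{med}(P,Q,R)\bigr) \le \tfrac{1}{4}\bigl(|P-Q|+|Q-R|+|P-R|\bigr),
\]
and the joint law of $(P,Q,R)$ is exchangeable (a quick check: swapping $x_3 \leftrightarrow x_4$ exchanges $Q$ and $R$, and analogous swaps generate the full symmetric group on $\{P,Q,R\}$), I get
\[
\mathbb{E}\fp(x_1,x_2,x_3,x_4) \le \tfrac{3}{4}\,\mathbb{E}|P-Q| \le 3\sqrt{\Var(L_n)},
\]
where $L_n := d(X_1,X_2)$ is the distance between two uniform random vertices of $\mathscr{C}_{\max}$; the last step uses $\Var(P) \le 4\Var(L_n)$ together with $\mathbb{E}|X - \mathbb{E}X| \le \sqrt{\Var(X)}$. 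Thus the entire statement reduces to showing $\Var(L_n) = o(\log^2 n)$ with high probability.

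\textbf{Reducing to concentration of the typical distance.} The variance bound is a very weak consequence of the supercritical typical-distance concentration $L_n/\log n \to 1/\log \lambda$ in probability (Bollob\'as; van der Hofstad, Hooghiemstra, and collaborators; compatible with the diameter estimates we already cite from \cite{riordan2010diameter}). Indeed, for any $\epsilon > 0$ and large $n$, the event $A_\epsilon := \{|L_n - \log n / \log \lambda| > \epsilon \log n\}$ has probability at most $\epsilon$, while on $A_\epsilon^c$ we have $|L_n - \log n/\log \lambda|^2 \le \epsilon^2 \log^2 n$; combining with the deterministic bound $L_n \le \diam(\mathscr{C}_{\max}) = O(\log n)$ gives $\Var(L_n) \le (\epsilon^2 + \epsilon \cdot C)\log^2 n$ for every $\epsilon > 0$, hence $\Var(L_n) = o(\log^2 n)$. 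If one prefers a self-contained derivation, the BFS neighborhood estimate of Lemma~\ref{lemm:neighbor-bounds-on-ER} already gives $|\Gamma_r(x)| \lesssim \lambda^r \cdot \mathrm{polylog}(n)$, so the two BFS balls from $X_1$ and $X_2$ collide shortly after each reaches size $\Omega(\sqrt{n})$, pinning $L_n$ to a window of width $o(\log n)$ around $\log n / \log \lambda$.

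\textbf{Main obstacle.} The delicate point is the conditioning on the random set $\mathscr{C}_{\max}$, which couples all pairwise distances and makes a direct variance computation cumbersome. I would sidestep this by working on the high-probability event $\{|\mathscr{C}_{\max}| \ge (\gamma(\lambda) - \epsilon)n\}$ and performing the BFS comparison inside the ambient $\ER(n, \lambda/n)$ graph, absorbing the rare pairs where one endpoint lies outside the giant or the BFS fronts fail to expand as expected into the trivial bound $\fp \le \tfrac{1}{2}\diam(\mathscr{C}_{\max}) = O(\log n)$, which contributes only $o(1)\cdot O(\log n) = o(\log n)$. Markov's inequality applied to the annealed expectation then upgrades the bound $\mathbb{E}_{\text{graph}}[\mathbb{E}\fp] = o(\log n)$ to the quenched statement $\mathbb{E}\delta_{fp}(\mathscr{C}_{\max}) = o(\log n)$ with high probability, completing the proof.
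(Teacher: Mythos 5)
Your proposal is correct, and it rests on the same essential external input as the paper's proof --- the concentration of the typical distance in the giant component, $d(x_1,x_2)/\log n \to 1/\log\lambda$ in probability (the paper cites van der Hofstad for exactly this) --- but you package it differently. The paper does \emph{not} run the variance argument for the Erd\H{o}s--R\'enyi case: it conditions on an event $B$ (giant component of the right size, diameter $O(\log n)$), reduces to four fixed vertices by homogeneity, splits on the event $E$ that some pairwise distance deviates from $\log_\lambda n$ by more than $\eta\log_\lambda n$, bounds $\fp$ pointwise by $O(\eta\log_\lambda n)$ on $E^c$ and by $\tfrac{1}{2}\diam(\mathscr{C}_{\max})$ on $E$, and finishes with Markov. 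You instead mirror the $\RRG$ section: the inequality $\fp \le \tfrac14(|P-Q|+|Q-R|+|P-R|)$, exchangeability of $(P,Q,R)$, and $\mathbb{E}\fp \le 3\sqrt{\Var(L_n)}$ are all valid, and your derivation of $\Var(L_n)=o(\log^2 n)$ from the in-probability concentration plus the $O(\log n)$ diameter bound is sound (for any $a$, $\Var(X)\le\mathbb{E}(X-a)^2$, so centering at $\log n/\log\lambda$ rather than the mean is fine; the contribution of the bad event should read $\epsilon C^2\log^2 n$ rather than $\epsilon C\log^2 n$, a harmless slip). What your route buys is uniformity with the $\RRG$ argument and a single clean reduction to a second-moment statement about $L_n$; what the paper's route buys is that it never has to make sense of ``the variance of $L_n$'' for a distance defined on the \emph{random} set $\mathscr{C}_{\max}$ --- the quenched-versus-annealed bookkeeping that you correctly flag as the delicate point is handled explicitly there via the events $B$ and $B_{1234}$ and homogeneity, and you would need to spell out the same bookkeeping to make your variance statement precise. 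Note also that the Riordan--Wormald diameter bound the paper cites is $O(\log n)$ for every $\lambda>1$ (the threshold $\lambda\ge 4.67$ is only needed elsewhere), so your reliance on $\diam(\mathscr{C}_{\max})=O(\log n)$ is justified in the stated generality.
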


\begin{proof}
    Fix $S_{\lambda} > \frac{1}{\log \lambda} + \frac{2}{\log (1/\lambda_*)}$.
    Let $B$ be the event such that \begin{enumerate}
        \item The giant component $\mathscr{C}_{max}$ has size $0.99 \gamma(\lambda) \cdot n \leq |\mathscr{C}_{max}| \leq 1.01 \gamma(\lambda) \cdot n$.
        \item The diameter $\diam(\mathscr{C}_{max}) \leq S_{\lambda} \cdot \log n$.
    \end{enumerate}
    We see that $\mathbb{P}(B) \to 1$ as $n \to \infty$. 
    Let $B_{1234}$ be the event that $B$ occurs and $x_1, x_2, x_3, x_4$ all belong to the giant component $\mathscr{C}_{\max}$ (for fixed vertices $x_1, x_2, x_3, x_4$). As the graph model is homogeneous, $\mathbb{P}(B_{1234}) \leq \frac{(1.01\gamma(\lambda)n)^4}{n^4}\mathbb{P}(B) = (1.01\gamma(\lambda))^4\mathbb{P}(B)$.

    The expectation of average hyperbolicity \emph{conditioned on} $B$ is
    \begin{align*}
        \mathbb{E}\left(\mathbb{E}\delta_{fp}(\mathscr{C}_{\max}) \;|\; B \right) & = \mathbb{E} \left( \frac{1}{|\mathscr{C}_{max}|^4} \sum_{x,y,z,w \in \mathscr{C}_{\max}} \fp(x,y,z,w) \;|\; B \right) \\
        & \leq \frac{1}{(0.99 \gamma(\lambda) n)^4} \mathbb{E} \left( \sum_{x,y,z,w \in \mathscr{C}_{\max}} \fp(x,y,z,w) \;|\; B \right) \\
        & = \frac{1}{(0.99 \gamma(\lambda) n)^4} \sum_{x,y,z,w} \mathbb{E} \left( \mathds{1}(x,y,z,w \in \mathscr{C}_{\max}) \wedge \fp(x,y,z,w) \;|\; B \right) \\
        & = \frac{1}{(0.99 \gamma(\lambda) n)^4} \frac{n!}{(n-4)!} \mathbb{E} \left( \mathds{1}(x_1,x_2,x_3,x_4 \in \mathscr{C}_{\max}) \wedge \fp(x_1,x_2,x_3,x_4) \;|\; B \right) \\
        & \leq \frac{1}{(0.99\gamma(\lambda))^4} \frac{\mathbb{P}(B_{1234})}{\mathbb{P}(B)} \mathbb{E} \left( \fp(x_1,x_2,x_3,x_4) \;|\; B_{1234} \right) \\
        & \leq \frac{(1.01)^4}{(0.99)^4} \mathbb{E} \left( \fp(x_1,x_2,x_3,x_4) \;|\; B_{1234} \right).
    \end{align*}
    Now we use the result from \cite{van2021giant}. It states that for fixed $x_1, x_2$, $\frac{d(x_1, x_2)}{\log n}$ converges to $\frac{1}{\log \lambda}$ in probability, while conditioned on $x_1$ and $x_2$ belonging to the giant component $\mathscr{C}_{max}$. In other words, given any $\eta > 0$,
    \[ \mathbb{P}(|d(x_1, x_2) - \log_{\lambda}n| > \eta \log_{\lambda}n \; | \; x_1, x_2 \in \mathscr{C}_{max}) \to 0. \]

    Fix $\eta > 0$. Let $E$ be the event (conditional on $B_{1234}$) such that there exists a pairwise distance among $x_1, x_2, x_3, x_4$ outside of $[(1-\eta)\log_{\lambda}n, (1+\eta)\log_{\lambda}n]$. If $E$ does not occur, then the four point condition $\fp(x_1, x_2, x_3, x_4)$ is bounded by $\eta \log_{\lambda}n$. If $E$ does occur, then we use $\fp(x_1, x_2, x_3, x_4) \leq \frac{1}{2} \diam(\mathscr{C}_{max}) \leq \frac{1}{2} S_{\lambda} \log n$ so that with high probability,
    \begin{align*}
        \mathbb{E}\fp(x_1, x_2, x_3, x_4 \;|\; B_{1234}) & = \mathbb{E}\left(\fp(x_1, x_2, x_3, x_4) \;|\; E \right) \mathbb{P}(E \;|\; B_{1234}) + \mathbb{E}\left(\fp(x_1, x_2, x_3, x_4) \;|\; E^c \right) \mathbb{P}(E^c \;|\; B_{1234})\\
        & \leq \frac{S_{\lambda}}{2}  \log n \cdot \mathbb{P}(E \;|\; B_{1234}) + (\eta \log_{\lambda}n) \cdot (1 - \mathbb{P}(E \;|\; B_{1234})) \\
        & \leq \left( \eta + \left(\frac{S_{\lambda}}{2} - \eta\right) \mathbb{P}(E \;|\; B_{1234}) \right) \cdot \log_{\lambda}n.
    \end{align*}
    As $\mathbb{P}(E \;|\; B_{1234}) \to 0$ as $n \to \infty$, $\mathbb{E}\fp(x_1, x_2, x_3, x_4 \;|\; B_{1234}) < 1.01 \eta \log_{\lambda}n$ for $n$ sufficiently large.

    Hence, for any fixed $\eta > 0$ and $M > 0$, $\mathbb{E}\left(\mathbb{E}\delta_{fp}(\mathscr{C}_{max}) \;|\; B \right) \leq \frac{(1.01)^5}{(0.99)^4} \frac{\eta}{2M} \log_{\lambda}n < \frac{\eta}{M} \log_{\lambda}n$ holds for $n$ sufficiently large. Hence,
    \[ \mathbb{P}(\mathbb{E}\delta_{fp}(\mathscr{C}_{max}) \geq \eta \log_{\lambda}n \;|\; B) \leq \frac{\mathbb{E}\left(\mathbb{E}\delta_{fp}(\mathscr{C}_{max}) \;|\; B \right)}{\eta \log_{\lambda}n} < \frac{1}{M} \quad \text{ for } n \text{ sufficiently large.}\]
    We conclude that $\mathbb{E}\delta_{fp}(\mathscr{C}_{\max}) = o(\log n)$ with high probability (using the fact that $B$ occurs with high probability).
\end{proof}

\section{Discussion}

Our analysis establishes important distinctions between the usual and the average case definitions of hyperbolicity. We show that in contrast to the usual definitions, the average definitions are {\emph not} all equivalent up to a constant factor. Furthermore, the examples that exhibit these differences are themselves interesting cases of random discrete structures and distributions of points in high dimensional Euclidean space that might masquerade as points from a hyperbolic space if one is not astute in one's measurements! There are a number of open questions that arise as a result of our analysis.

\subsection{Improving the gap between AvgHyp and AvgSlim (or AvgMinsize)}

For the Erd\"os-Renyi random graph $\ER(n, \lambda/n)$ case, we establish that the average hyperbolicity (AvgHyp) is bounded by $o(\log n)$, using the distribution of the shortest path lengths (DSPL). Importantly, this result does not rely on any specific structural information about the quadruples of vertices, leaving room to address the gaps amongst the average case definitions. Similarly, in the case of random regular graphs, one can compute the variance of DSPL, which  provides a bound on the average hyperbolicity, without using specific structural information about quadruples of points.

We did, however, assume $\lambda \geq 4.67$ to ensure that the graph’s diameter remains manageable. What happens if $\lambda > 1$ but is much smaller, perhaps approaching 1? In such scenarios, can we still expect the graph and its geodesic triangles to be $\Omega(\log n)$-fat? Or does it become \emph{truly} tree-like instead? These are questions that are valid, interesting future or follow-up work after our initial results.

\subsection{``tree-likeness'' of AvgSlim (or AvgMinsize)}

We have seen that the \emph{average hyperbolicity} can be used as a measure of how tree-like the space is \cite{chatterjee2021average,yim2024fitting}. We have demonstrated that, actually, the average hyperbolicity is the weakest one amongst the average measures! In fact, the upper bound provided by \cite{chatterjee2021average} and \cite{yim2024fitting} are both quite loose: the quantitative bound on results of \cite{chatterjee2021average} is hard to track since they use the weighted Szemer\'edi regularity lemma, which is known to have a gigantic bound. Also, it turns out that the $O(n^3)$ bound given by \cite{yim2024fitting} is enormous in practice. This might not be a coincidence, as we may have assumed too weak an average hyperbolicity bound. For example, given a space or a graph where the \emph{average slimness} is bounded, is there a good tree that fits the given distance with smaller average distortion?


\subsection{Expander graphs}
This is an important open question. There are negative results for the usual definitions of hyperbolicity~\cite{benjamini1998expanders, malyshev2015expanders} but we do not know about the average definition. It's possible (given the results in~\cite{li2015traffic}) that there are some families of expanders that are hyperbolic on average and possibly some that are not.

\bibliography{ref}
\bibliographystyle{plain}

\newpage
\section{Appendix}

\subsection{Proof of Lemma~\ref{lemm:colors-count}}

    We will first show the following lemma:
    \begin{lemm}
        Given a bipartite graph $G = (U,V,E)$,
        \[ \sqrt{|E|} \leq \sum_{(u,v) \in E, u \in U, v \in V} \frac{1}{\sqrt{\deg(u) \cdot \deg(v)}}.\]
    \end{lemm}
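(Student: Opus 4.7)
The plan is to prove the lemma via two applications of Cauchy--Schwarz, closing with an elementary bipartite estimate $\sum_{(u,v)\in E} d_u d_v \le |E|^2$. Write $S := \sum_{(u,v)\in E} 1/\sqrt{d_u d_v}$ for the target right-hand side and $T := \sum_{(u,v)\in E} \sqrt{d_u d_v}$ for its natural dual. The strategy is to sandwich $|E|$: first prove $S \cdot T \geq |E|^2$, then prove $T \leq |E|^{3/2}$, so that $S \geq |E|^2/T \geq \sqrt{|E|}$ falls out immediately.

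For the first inequality, I apply Cauchy--Schwarz in the form $\bigl(\sum_i 1\bigr)^2 \leq \bigl(\sum_i 1/a_i\bigr)\bigl(\sum_i a_i\bigr)$ edge-wise with $a_{(u,v)} = \sqrt{d_u d_v}$:
\[ |E|^2 \;\leq\; \Bigl(\sum_{(u,v) \in E} \tfrac{1}{\sqrt{d_u d_v}}\Bigr)\Bigl(\sum_{(u,v) \in E} \sqrt{d_u d_v}\Bigr) \;=\; S \cdot T. \]
A second Cauchy--Schwarz applied to $T$, pairing the constant $1$ against $\sqrt{d_u d_v}$, yields $T^2 \leq |E| \cdot \sum_{(u,v)\in E} d_u d_v$. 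So the entire lemma reduces to verifying a single estimate, namely $\sum_{(u,v) \in E} d_u d_v \leq |E|^2$.

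That last estimate is the only place where bipartiteness is actually used, and it is where I expect the key observation to land. Grouping edges by their $U$-endpoint,
\[ \sum_{(u,v) \in E} d_u d_v \;=\; \sum_{u \in U} d_u \sum_{v \sim u} d_v \;\leq\; \Bigl(\sum_{u \in U} d_u\Bigr)\Bigl(\sum_{v \in V} d_v\Bigr) \;=\; |E|^{2}, \]
where the inequality holds because every neighbor of $u$ lies in $V$, so the inner sum $\sum_{v \sim u} d_v$ is bounded by the total $V$-degree $\sum_{v \in V} d_v = |E|$. Chaining the three bounds then gives $T \leq |E|^{3/2}$ and hence $S \geq \sqrt{|E|}$, as claimed. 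The main subtlety to flag is that this argument uses bipartiteness in a genuine way at the very last step: on a non-bipartite graph (e.g.\ the triangle $K_3$) the estimate $\sum d_u d_v \leq |E|^2$ fails, so the bipartite hypothesis of the lemma is not cosmetic.
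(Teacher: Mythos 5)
Your proof is correct. It takes a somewhat different route from the paper's, though both arguments ultimately rest on the same bipartite fact. The paper works locally: it groups the sum by the $U$-endpoint, applies Jensen (power-mean) to the inner sum $\sum_{v \in N(u)} \deg(v)^{-1/2}$ to get $\sum_{(u,v)\in E} (\deg(u)\deg(v))^{-1/2} \geq \sum_{u} \deg(u)/\sqrt{M_u}$ with $M_u := \sum_{v \sim u} \deg(v)$, and then bounds each $M_u \leq |E|$. You instead argue globally via a sandwich: $S \cdot T \geq |E|^2$ and $T^2 \leq |E|\sum_{(u,v)\in E} d_u d_v$ by two edge-wise Cauchy--Schwarz applications, reducing everything to $\sum_{(u,v)\in E} d_u d_v \leq |E|^2$. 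Since $\sum_{(u,v)\in E} d_u d_v = \sum_u d_u M_u$, your final estimate is exactly the paper's bound $M_u \leq |E|$ summed against $d_u$, so the bipartite input is identical; what differs is the convexity step (one Jensen per vertex versus two global Cauchy--Schwarz inequalities). Your version is arguably cleaner to verify and isolates the role of bipartiteness in a single displayed inequality; your remark that the reduction (and the lemma itself) fails on $K_3$ is a worthwhile sanity check that the paper does not include. The paper's local form has the minor advantage of exposing the intermediate quantity $\sum_u \deg(u)/\sqrt{M_u}$, which is slightly sharper when the $M_u$ are far from $|E|$, but for the application here the two are interchangeable.
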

    \begin{proof}
        \begin{align*}
            \sum_{(u,v) \in E, u \in U, v \in V} \frac{1}{\sqrt{\deg(u) \cdot \deg(v)}} & = \sum_{u \in U} \frac{1}{\sqrt{\deg(u)}} \left( \sum_{v \in N(u)} \frac{1}{\sqrt{\deg(v)}} \right) \\
            & \geq \sum_{u \in U} \frac{1}{\sqrt{\deg(u)}} \left( |N(u)| \cdot \frac{1}{\sqrt{M_u / |N(u)|}} \right) \quad (M_u := \sum_{v \in N(u)} \deg(v) \leq |E|) \\
            & = \sum_{u \in U} \frac{1}{\sqrt{\deg(u)}} \cdot \frac{\deg(u)^{3/2}}{\sqrt{M_u}} = \sum_{u \in U} \frac{\deg(u)}{\sqrt{M_u}} \geq \sum_{u \in U} \frac{\deg(u)}{\sqrt{|E|}} = \sqrt{|E|}.
        \end{align*}
    \end{proof}
    Now we will proceed the main proof of the Lemma. Let $\mathcal{C}$ denote the collection of colors and $c(x,y)$ denote the color of $(x,y) \in E$. Let $E_c := \{(x,y) \in E: c(x,y) = c\}$, $N_c(x) := \{y : (x,y) \in E_c\}$, and $\deg_c(x) := |N_c(x)|$ for $c \in \mathcal{C}$ and $x \in U \cup V$. Then by Cauchy-Schwarz inequality,
    \begin{align*}
        \left( \sum_{\substack{(u,v) \in E, u \in U, v \in V \\ c = c(u,v)}} \frac{1}{\sqrt{\deg_c(u) \cdot \deg_c(v)}} \right)^2 \leq \left( \sum_{\substack{(u,v) \in E, u \in U, v \in V \\ c = c(u,v)}} \frac{1}{\deg_c(u)} \right) \left( \sum_{\substack{(u,v) \in E, u \in U, v \in V \\ c = c(u,v)}} \frac{1}{\deg_c(v)} \right).
    \end{align*}
    We see that
    \[ \sum_{\substack{(u,v) \in E, u \in U, v \in V \\ c = c(u,v)}} \frac{1}{\deg_c(u)} = \sum_{u \in U} \sum_{v \in N(u), c = c(u,v)} \frac{1}{\deg_c(u)} = \sum_{u \in U} \sum_{\substack{c \in \mathcal{C} \\ N_c(u) \neq \emptyset}} \sum_{v \in N_c(u)} \frac{1}{\deg_c(u)} = \sum_{u \in U} \sum_{\substack{c \in \mathcal{C} \\ N_c(u) \neq \emptyset}} 1 \leq \sum_{u \in U} t(u),\]
    and similarly,
    \[ \sum_{\substack{(u,v) \in E, u \in U, v \in V \\ c = c(u,v)}} \frac{1}{\deg_c(v)} \leq \sum_{v \in V} t(v) \]
    holds. On the other hand,
    \[ \sum_{\substack{(u,v) \in E, u \in U, v \in V \\ c = c(u,v)}} \frac{1}{\sqrt{\deg_c(u) \cdot \deg_c(v)}} = \sum_{c \in \mathcal{C}} \sum_{(u,v) \in E_c, u \in U, v \in V} \frac{1}{\sqrt{\deg_c(u) \cdot \deg_c(v)}} \geq \sum_{c \in \mathcal{C}} \sqrt{|E_c|},\]
    by the previous lemma. This shows
    \[ \sum_{\substack{(u,v) \in E, u \in U, v \in V \\ c = c(u,v)}} \frac{1}{\sqrt{|E_c|}} = \sum_{c \in \mathcal{C}} \sqrt{|E_c|} \leq \sqrt{ \left( \sum_{u \in U} t(u) \right) \cdot \left( \sum_{v \in V} t(v) \right)}, \]
    so that there exists $(u,v) \in E$ with $\frac{1}{\sqrt{|E_c|}} \leq \frac{\sqrt{ \left( \sum_{u \in U} t(u) \right) \cdot \left( \sum_{v \in V} t(v) \right)}}{|E|}$ holds. Thus,
    \[ |E_c| \geq \frac{|E|^2}{\left( \sum_{u \in U} t(u) \right) \left( \sum_{v \in V} t(v) \right)}, \]
    as desired.






\subsection{Proof of Lemma~\ref{lemm:neighbor-bounds-on-ER}}
For fixed $x$, denote $a_t := |\Gamma_t(x)|$ and $b_t := \lfloor (t+1)^2 \lambda^t \log n \rfloor$. We will show the following:
\begin{claim}
    Given $0 \leq t \leq n$, $a_s \leq b_s$ holds for all $0 \leq s \leq t$ with probability at least $1 - t/n^{2\lambda}$.
\end{claim}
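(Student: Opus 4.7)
The plan is to prove the claim by induction on $t$, using the BFS exploration of $G_n$ starting at $x$. The base case $t=0$ is immediate since $a_0 = 1$ and $b_0 = \lfloor \log n \rfloor \geq 1$ for $n$ large enough. For the inductive step, assume $a_s \leq b_s$ holds for all $0 \leq s \leq t$; we then want to bound the probability that $a_{t+1} > b_{t+1}$, conditional on the event that this inductive hypothesis holds.

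The key observation is that conditional on the BFS exploration up to level $t$ (that is, on the set $N_t(x)$ and on which vertex of $\Gamma_t(x)$ each vertex of $N_t(x) \setminus \{x\}$ was discovered through), the edges between $\Gamma_t(x)$ and $V \setminus N_t(x)$ remain independent Bernoulli($\lambda/n$) random variables. Therefore, $|\Gamma_{t+1}(x)|$ is stochastically dominated by $\Bin(|\Gamma_t(x)| \cdot n,\,\lambda/n)$, whose mean is at most $\lambda \cdot a_t \leq \lambda \cdot b_t \approx (t+1)^2 \lambda^{t+1} \log n$. Since $b_{t+1} \approx (t+2)^2 \lambda^{t+1} \log n$, the target $b_{t+1}$ exceeds the mean by a factor of $(1 + 2/(t+1))^2$, i.e.\ an additive deviation of order $(2t+3) \lambda^{t+1} \log n$.

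I would then apply a Bernstein/Chernoff-type tail bound of the form $\mathbb{P}(\Bin(m,p) \geq mp + k) \leq \exp\left(-\tfrac{k^2}{2(mp + k/3)}\right)$. Plugging in $mp \approx (t+1)^2 \lambda^{t+1} \log n$ and $k \approx (2t+3) \lambda^{t+1} \log n$, the exponent simplifies to something of order $\lambda^{t+1} \log n$, yielding a failure probability at most $n^{-2\lambda^{t+1}} \leq n^{-2\lambda}$ at each step (using $\lambda^{t+1} \geq \lambda$). Finally, a union bound over $s = 0, 1, \ldots, t-1$ produces the stated bound $1 - t/n^{2\lambda}$.

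The main obstacle will be the fact that the ratio $b_{t+1}/(\lambda b_t) = ((t+2)/(t+1))^2$ barely exceeds $1$: the simpler multiplicative Chernoff bound of the form $((1+\delta)/e^\delta)^{mp}$ will not be sharp enough, nor will the elementary $(emp/x)^x$ form since $e (t+1)^2/(t+2)^2 > 1$ for large $t$. The Bernstein-type inequality (or equivalently the Bennett form) is exactly what is needed: the variance scale $mp = \Theta((t+1)^2 \lambda^{t+1} \log n)$ is large enough that a modest multiplicative deviation translates into an additive deviation of order $\sqrt{mp \log n}$, which is precisely the regime where this concentration inequality gives the required $n^{-2\lambda}$ factor per level. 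One must also double-check edge cases such as $t$ close to $n$ (where $b_t > n$ trivially forces the bound) and small $n$ (so $b_0 \geq 1$), but these are minor technicalities.
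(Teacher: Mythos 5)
Your proposal matches the paper's own argument essentially step for step: induction on the level, stochastic domination of $|\Gamma_{t+1}(x)|$ by $\Bin(n\,a_t,\lambda/n)$ conditional on the exploration up to level $t$, the Bernstein-form bound $\mathbb{P}(X \geq mp + k) \leq \exp\left(-\tfrac{k^2}{2(mp+k/3)}\right)$ with $mp \leq (t+1)^2\lambda^{t+1}\log n$ and $k = (2t+3)\lambda^{t+1}\log n$ giving a per-level failure probability $n^{-2\lambda^{t+1}} \leq n^{-2\lambda}$, and a union bound over levels. The paper carries out exactly the algebra you sketch (the exponent evaluates to $\tfrac{3(4t^2+12t+9)}{2(3t^2+8t+6)}\lambda^{t+1}\log n \geq 2\lambda^{t+1}\log n$), so your approach is correct and not materially different.
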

\begin{claimproof}
    We induct on $t$. For $t = 0$, $1 = a_0 \leq b_0 = \lfloor \log n \rfloor$, which is always true (if $n \geq 3$). Now, suppose the assertion holds on $t = r$. 
    The following Chernoff type bound is well-known.
    \begin{lemm}
        Given a binomial distribution $X \sim \Bin(n,p)$, we have
        \[ \mathbb{P}(X \geq np + a) \leq \exp\left( - \frac{a^2}{2(np+a/3)} \right) \quad \forall t \geq 0. \]
    \end{lemm}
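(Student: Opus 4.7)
The statement is a Bernstein-type refinement of the standard Chernoff bound for a binomial, and the plan is the classical two-step derivation: apply the exponential Markov inequality, optimize over the dual parameter, and then invoke one elementary scalar inequality.

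First, I would write $X = \sum_{i=1}^{n} X_i$ with $X_i \sim \mathrm{Bernoulli}(p)$ i.i.d. For any $s > 0$, Markov's inequality applied to $e^{sX}$ gives
\[
    \mathbb{P}(X \geq np + a) \;\leq\; e^{-s(np+a)}\,\mathbb{E}[e^{sX}] \;=\; e^{-s(np+a)}(1-p+pe^s)^n \;\leq\; \exp\!\bigl(np(e^s-1) - s(np+a)\bigr),
\]
where the last step uses $1+y \leq e^y$ with $y = p(e^s-1)$.

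Next, I would minimize the exponent over $s > 0$. The first-order condition yields $s^\star = \log(1 + a/(np))$; substituting back produces the classical Chernoff--Bennett form
\[
    \mathbb{P}(X \geq np+a) \;\leq\; \exp\!\bigl(-np\,\psi(a/(np))\bigr), \qquad \psi(u) := (1+u)\log(1+u)-u.
\]
The stated bound then follows at once from the elementary inequality $\psi(u) \geq u^2/\bigl(2(1+u/3)\bigr)$ for $u \geq 0$: plugging in $u = a/(np)$ and multiplying through by $np$ converts the exponent into $a^2/\bigl(2(np+a/3)\bigr)$, which is exactly the claimed form.

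The only substantive obstacle is the scalar inequality $\psi(u) \geq u^2/(2(1+u/3))$. I would prove it by setting $g(u) := (1+u/3)\psi(u) - u^2/2$ and checking $g(0) = g'(0) = 0$ together with $g''(u) = \tfrac{2}{3}\bigl[\log(1+u) - u/(1+u)\bigr] \geq 0$; the last inequality is a standard tangent-line estimate for the concave function $\log(1+u)$ (verifiable by differentiating $\log(1+u) - u/(1+u)$ and noting the derivative is $u/(1+u)^2 \geq 0$). Everything else is routine bookkeeping.
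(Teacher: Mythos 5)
Your proof is correct and complete. The paper itself offers no proof of this lemma --- it is stated as a ``well-known'' Chernoff-type bound and used as a black box --- so there is nothing to compare against; your derivation (exponential Markov inequality, optimizing $s^\star = \log(1+a/(np))$ to get the Bennett exponent $-np\,\psi(a/(np))$, then the scalar inequality $\psi(u) \geq u^2/(2(1+u/3))$ verified via $g(0)=g'(0)=0$ and $g''(u) = \tfrac{2}{3}[\log(1+u)-u/(1+u)] \geq 0$) is exactly the standard route to this Bernstein-form bound, and every step checks out. One incidental remark: the quantifier in the paper's statement should read $\forall a \geq 0$ rather than $\forall t \geq 0$, since $t$ does not appear in the inequality; your proof correctly treats $a \geq 0$ as the relevant parameter.
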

    We use the fact that \emph{conditional on $a_0, a_1, \cdots, a_{r}$}, the distribution of $a_{r + 1}$ is dominated by the distribution of the number of boundary edges $|\partial \Gamma_{r}(x)|$, which is binomial with parameters $(n - |N_{r}(x)|) \cdot a_{r} \leq na_r$ and $p = \lambda / n$. Suppose $a_0 \leq b_0, \cdots, a_{r} \leq b_{r}$. Then
    \begin{align*}
        \mathbb{P}(a_{r+1} > b_{r+1} | a_0, \cdots, a_r) & = \mathbb{P}(a_{r+1} >(r+2)^2 \lambda^{r+1} \log n | a_0, \cdots, a_r) \\
        & \leq \mathbb{P}\left(\Bin \left(na_r, \frac{\lambda}{n} \right) > (r+2)^2 \lambda^{r+1} \log n \right) \\
        & \leq \mathbb{P}\left(\Bin \left(nb_r, \frac{\lambda}{n} \right) > (r+2)^2 \lambda^{r+1} \log n \right) \quad (\because a_r \leq b_r) \\
        & \leq \exp \left[ - \frac{\left( (r+2)^2 \lambda^{r+1} \log n - \lambda b_r \right)^2}{2 \left( \lambda b_r + \frac{(r+2)^2 \lambda^{r+1} \log n - \lambda b_r}{3}  \right)}\right] \\
        & \leq \exp \left[ - \frac{\left( (r+2)^2 \lambda^{r+1} \log n - (r+1)^2 \lambda^{r+1} \log n \right)^2}{2 \left( \frac{2 (r+1)^2 \lambda^{r+1} \log n + (r+2)^2 \lambda^{r+1} \log n}{3}  \right)}\right] \quad (\because \lambda b_r \leq (r+1)^2 \lambda^{r+1} \log n ) \\
        & = \exp \left[ - \lambda^{r+1} \log n \cdot \frac{3(4r^2 + 12r + 9)}{2(3r^2 + 8r + 6)} \right] \leq \exp \left( - 2 \lambda^{r+1} \log n \right) \leq \frac{1}{n^{2\lambda}}.
    \end{align*}
    By the induction hypothesis, we see that $a_0 \leq b_0, \cdots, a_r \leq b_r$ holds with probability at least $1 - r/n^{2\lambda}$ and conditional on that, the probability of $a_{r+1} > b_{r+1}$ is bounded by $1/n^{2\lambda}$. Therefore, $a_0 \leq b_0, \cdots, a_{r+1} \leq b_{r+1}$ holds with probability at least $1 - (r+1)/n^{2\lambda}$ which concludes the proof.
\end{claimproof}

Now, let $B(x)$ be the \emph{bad} event that $|\Gamma_t(x)| > (t+1)^2 \lambda^t \log n$ for some $t$. By the claim, $\mathbb{P}(B(x)) \leq n^{1-2\lambda}$. Therefore, for $B := \cup_x B(x)$,
\[ \mathbb{P}(B) \leq \sum_x \mathbb{P}(B(x)) \leq n^{2 - 2\lambda} = o(1),\]
as desired.

\end{document}